\LetLtxMacro{\oldsqrt}{\sqrt}
\renewcommand{\sqrt}[2][]{\,\oldsqrt[#1]{#2}\,}
\newcommand{\bsh}{\backslash}
\def\Zp{{\bbZ}_p}
\newcommand{\wcO}{\widehat{\mathcal{O}}}
\newcommand{\whA}{\widehat{A}}
\newcommand{\Fqbar}{\overline{\mathbb{F}}_q}
\newcommand{\Fq}{\mathbb{F}_q}
\newcommand{\Fp}{\mathbb{F}_p}
\newcommand{\cc}{\mathbb{C}}
\newcommand{\dieu}{Dieudonn\'{e} }
\DeclareMathOperator{\SCl}{SCl}
\DeclareMathOperator{\Emb}{Emb}
\DeclareMathOperator{\SG}{SG}
\DeclareMathOperator{\Mass}{Mass}
\DeclareMathOperator{\Cl}{Cl}
\DeclareMathOperator{\Tp}{Tp}
\DeclareMathOperator{\Nr}{Nr}
\newcommand{\<}{\langle}   
\renewcommand{\>}{\rangle} 
\def\sfF{{\sf F}}
\def\sfV{{\sf V}}
\newcommand{\isoto}{\stackrel{\sim}{\longrightarrow}}
\newcommand{\pp}{\mathrm{pp}}
\newcommand{\un}{{\mathrm{un}}}
\newcommand{\pmm}{\mathrm{pm}}
\newcommand{\scc}{\mathrm{sc}} 
\newcommand{\sg}{\mathrm{sg}} 
\def\ul{\underline}
\DeclareMathOperator{\Isog}{Isog}
\DeclareMathOperator{\Qisog}{Qisog}
\DeclareMathOperator{\ppav}{PPAV}
\DeclareMathOperator{\ppsp}{PPSP}
\DeclareMathOperator{\Stab}{Stab}
\DeclareMathOperator{\PolSp}{PolSp}
\def\@tocline#1#2#3#4#5#6#7{\relax
  \ifnum #1>\c@tocdepth 
  \else
    \par \addpenalty\@secpenalty\addvspace{#2}%
    \begingroup \hyphenpenalty\@M
    \@ifempty{#4}{%
      \@tempdima\csname r@tocindent\number#1\endcsname\relax
    }{%
      \@tempdima#4\relax
    }%
    \parindent\z@ \leftskip#3\relax \advance\leftskip\@tempdima\relax
    \rightskip\@pnumwidth plus4em \parfillskip-\@pnumwidth
    #5\leavevmode\hskip-\@tempdima
      \ifcase #1
       \or\or \hskip 1em \or \hskip 2em \else \hskip 3em \fi%
      #6\nobreak\relax
    \dotfill\hbox to\@pnumwidth{\@tocpagenum{#7}}\par
    \nobreak
    \endgroup
  \fi}
\def\bmu{\boldsymbol \mu}
\newcommand{\dbr}[1]{\left\llbracket #1 \right\rrbracket}
\def\greekbolds#1{%
 \@for\next:=#1\do{%
    \def\X##1;{%
     \expandafter\def\csname V##1\endcsname{\boldsymbol{\csname##1\endcsname}}
     }
   \expandafter\X\next;
  }
}
\def\make@bb#1{\expandafter\def
  \csname bb#1\endcsname{{\mathbb{#1}}}\ignorespaces}
\def\make@bbm#1{\expandafter\def
  \csname bb#1\endcsname{{\mathbbm{#1}}}\ignorespaces}
\def\make@bf#1{\expandafter\def\csname bf#1\endcsname{{\bf
      #1}}\ignorespaces} 
\def\make@gr#1{\expandafter\def
  \csname gr#1\endcsname{{\mathfrak{#1}}}\ignorespaces}
\def\make@scr#1{\expandafter\def
  \csname scr#1\endcsname{{\mathscr{#1}}}\ignorespaces}
\def\make@cal#1{\expandafter\def\csname cal#1\endcsname{{\mathcal
      #1}}\ignorespaces} 
\def\do@Letters#1{#1A #1B #1C #1D #1E #1F #1G #1H #1I #1J #1K #1L #1M
                 #1N #1O #1P #1Q #1R #1S #1T #1U #1V #1W #1X #1Y #1Z}
\def\do@letters#1{#1a #1b #1c #1d #1e #1f #1g #1h #1i #1j #1k #1l #1m
                 #1n #1o #1p #1q #1r #1s #1t #1u #1v #1w #1x #1y #1z}
\newcommand{\abs}[1]{\lvert #1 \rvert}
\newcommand{\zmod}[1]{\mathbb{Z}/ #1 \mathbb{Z}}
\newcommand{\dangle}[1]{\left\langle #1 \right\rangle}
\newcommand{\wh}{\widehat}
\newcommand{\wt}{\widetilde}
\newcommand{\Lsymb}[2]{\genfrac{(}{)}{}{}{#1}{#2}}  
\newcommand{\qalg}[3]{\left(\frac{#1, #2}{#3}\right)}
\newcommand{\ddiv}{\mid}
\DeclareMathSymbol{\twoheadrightarrow} {\mathrel}{AMSa}{"10}
\DeclareMathOperator{\id}{id}
\DeclareMathOperator{\img}{img}
\DeclareMathOperator{\ord}{ord}
\DeclareMathOperator{\Frac}{Frac}
\DeclareMathOperator{\Pic}{Pic}
\DeclareMathOperator{\Alt}{Alt}
\DeclareMathOperator{\Res}{Res}
\DeclareMathOperator{\End}{End}
\DeclareMathOperator{\Hom}{Hom}
\DeclareMathOperator{\Gal}{Gal}
\DeclareMathOperator{\Mat}{Mat}
\DeclareMathOperator{\Tr}{Tr}
\DeclareMathOperator{\Nm}{N}  
\DeclareMathOperator{\GL}{GL}
\DeclareMathOperator{\SL}{SL}
\DeclareMathOperator{\Sp}{Sp}
\DeclareMathOperator{\PGL}{PGL}
\DeclareMathOperator{\PSL}{PSL}
\newcommand{\Qbar}{\bar{\mathbb{Q}}}
\newcommand{\Z}{\mathbb Z}
\newcommand{\Q}{\mathbb Q}
\newcommand{\R}{\mathbb R}
\newcommand{\F}{\mathbb F}
\newcommand{\whD}{\widehat{D}}
\newcommand{\whF}{\widehat{F}}
\newcommand{\whK}{\widehat{K}}
\newcommand{\whO}{\widehat{O}}
\newcommand{\whI}{\widehat{I}}
\newcommand{\whZ}{\widehat{\mathbb{Z}}}
\newcommand{\whQ}{\widehat{\mathbb{Q}}}
\newcounter{thmcounter} 
\numberwithin{thmcounter}{section}  
\newtheorem{thm}[thmcounter]{Theorem}
\newtheorem{lem}[thmcounter]{Lemma}
\newtheorem{cor}[thmcounter]{Corollary}
\newtheorem{prop}[thmcounter]{Proposition}
\newtheorem*{que}{Question}
\theoremstyle{definition}
\newtheorem{defn}[thmcounter]{Definition}
\newtheorem{ex}[thmcounter]{Example}
\newtheorem{rem}[thmcounter]{Remark}
\newtheorem{sect}[thmcounter]{}
\numberwithin{equation}{section}
\numberwithin{figure}{section}
\numberwithin{table}{section}
\newtheoremstyle{notitle}  
  {}
  {}
  {\itshape}
  {}
  {}
  {\ }
  {.5em}
  {}
\theoremstyle{notitle}
\title[Superspecial abelian surfaces]{Polarized 
  superspecial simple abelian surfaces with real Weil numbers}
\author{Jiangwei Xue}
\address{(Xue) Collaborative Innovation Center of Mathematics, School of
  Mathematics and Statistics, Wuhan University, Luojiashan, 430072,
  Wuhan, Hubei, P.R. China}   
\address{(Xue) Hubei Key Laboratory of Computational Science (Wuhan
  University), Wuhan, Hubei,  430072, P.R. China.}
\email{xue\_j@whu.edu.cn}
\author{Chia-Fu Yu}
\address{(Yu) Institute of Mathematics,
  Academia Sinica, Astronomy-Mathematics
  Building, No. 1, Sec. 4, Roosevelt Road, Taipei 10617, TAIWAN.}
\address{(Yu) National Center for Theoretical Sciences, 
  Astronomy-Mathematics
  Building, No. 1, Sec. 4, Roosevelt Road, Taipei 10617, TAIWAN.}
\email{chiafu@math.sinica.edu.tw}
\begin{document}
\date{\today} 

\subjclass[2010]{11R52, 11G10} \keywords{supersingular
  abelian surfaces, class number formula, type number formula.}

\begin{abstract}
  Let $q$ be an odd power of a prime $p\in \mathbb{N}$, and
  $\mathrm{PPSP}(\sqrt{q})$ be the finite set of isomorphism classes
  of principally polarized superspecial abelian surfaces in the simple
  isogeny class over $\mathbb{F}_q$ corresponding to the real Weil
  $q$-numbers $\pm \sqrt{q}$.  We produce explicit formulas for
  $\mathrm{PPSP}(\sqrt{q})$ of the following  kinds: (i) the
  class number formula, i.e.~the cardinality of
  $\mathrm{PPSP}(\sqrt{q})$; (ii) the type number formula, i.e.~the
  number of endomorphism rings up to isomorphism of the underlying
  abelian surfaces of
  $\mathrm{PPSP}(\sqrt{q})$.  Similar formulas are obtained for
  other collections of  polarized superspecial members of this isogeny
  class grouped together according to their
  polarization modules. We observe several surprising identities
  involving the arithmetic genus of certain Hilbert modular surface on
  one side and the class number or type number of $(P, P_+)$-polarized
  superspecial abelian surfaces in this isogeny class on the other
  side.
\end{abstract}

\maketitle



\section{Introduction}

Let $p\in \bbN$ be a prime number, $q=p^n$ a power of $p$, and $\Fq$
the finite field with $q$ elements. Let $\Qbar$ be the algebraic
closure of $\Q$ in $\cc$.  An algebraic integer
$\pi\in \Qbar\subset \cc$ is called a \emph{Weil $q$-number} if
$\abs{\sigma(\pi)}=\sqrt{q}$ for every embedding
$\sigma: \Q(\pi)\hookrightarrow \cc$. By the Honda-Tate Theorem
\cite[Theorem~1]{tate:ht}, there is a bijection between the isogeny
classes of simple abelian varieties over $\F_q$ and the
$\Gal(\Qbar/\Q)$-conjugacy classes of Weil $q$-numbers.
Throughout this paper,  we reserve terms such as
 ``isogeny, isomorphism, endomorphism,  or 
polarization'' of abelian varieties over $\Fq$ for those defined over the base
field $\Fq$.
Let
$X_\pi/\F_q$ be a simple abelian variety in the isogeny class
corresponding to the Weil $q$-number $\pi$.  Both the dimension
$g(\pi)\coloneqq\dim(X_\pi)$ and the endomorphism algebra
$\End^0(X_\pi)\coloneqq\End(X_\pi)\otimes_\Z\Q$ are invariants of the
isogeny class and can be determined explicitly from $\pi$ (ibid.).
Recall that $\End^0(X_\pi)$ is a finite-dimensional central division
$\Q(\pi)$-algebra.

It is well known \cite[4.1]{Zarhin:finiteness-AV-1977} that for each
fixed $g\geq 1$, there are only finitely many $g$-dimensional abelian
varieties over $\F_q$ up to isomorphism.  Let $\Isog(\pi)$ be the finite
set of $\F_q$-isomorphism classes of simple abelian varieties  in the isogeny
class corresponding to $\pi$. Similarly, let $\ppav(\pi)$ be the finite
set of isomorphism classes of principally polarized abelian
varieties $(X, \lambda)/\F_q$ with $X/\F_q$ in the simple isogeny
class corresponding to $\pi$. 
The finiteness of $\ppav(\pi)$ is again well known and follows from  \cite[Appendix I,
Lemma~1]{mumford:av} (see also~\cite{Lipnowski-Tsimerman}). 
Therefore, it is natural to ask: 
\begin{que}
  What are the cardinalities of $\Isog(\pi)$ and $\ppav(\pi)$? 
\end{que}

In contrast to the algebraically closed base field case
\cite[\S23, Corollary~1]{mumford:av}, the set $\ppav(\pi)$ could be
empty. Consequently, an $\Fq$-isogeny class $\calI$ of abelian varieties is said to be {\it
principally polarizable} if there exists an abelian variety $X\in\calI$ that admits a
principal polarization over $\Fq$. The question  whether a given
$\Fq$-isogeny class $\calI$ is principally polarizable  has been investigated  
by E.~Howe in a series of papers (see \cite{Howe-Maisner-et-al2008} 
for the precise references). 
Howe, Maisner, Nart and Ritzenthaler \cite[Theorem
1]{Howe-Maisner-et-al2008} determined all isogeny classes of abelian
surfaces that are not principally polarizable.  Based on the works of
R\"uck~\cite{Ruck1990}, Maisner-Nart~\cite{maisner-nart}, and the
aforementioned result, Howe, Nart and
Ritzenthaler~\cite{Howe-Maisner-et-al2008} gave a complete solution
to the problem of characterizing the isogeny classes of abelian
surfaces over finite fields containing a Jacobian.

Let $q=p^n$ be an arbitrary prime power. The Weil $q$-numbers
$\pm \sqrt{q}$ are exceptional in several ways.  Suppose that $\pi$ is
a Weil $q$-number different from $\pm \sqrt{q}$. Then the number field
$\Q(\pi)$ is always a CM-field (i.e.~a totally imaginary quadratic
extension of a totally real number field).  From
\cite[Proposition~2.2]{xue-yu:counting-av},
\begin{equation}
  \label{eq:intro-3}
  \abs{\Isog(\pi)}=N_\pi\cdot h(\Q(\pi)), 
\end{equation}
where $N_\pi$ is a positive integer, and $h(\Q(\pi))$ is the class
number of $\Q(\pi)$. It should be mentioned that $N_\pi$ is
highly dependent on $\pi$ and can be challenging to calculate
explicitly in general. See the discussions in
\cite[\S3.2]{Lipnowski-Tsimerman} and
\cite[\S2.4]{xue-yu:counting-av}.   The proof of (\ref{eq:intro-3}) relies
on a strong approximation argument, which fails for the Weil $q$-numbers $\pm\sqrt{q}$.   The
distinction is further amplified in the case $q=p$. If $\pi$ is a Weil
$p$-number distinct from $\pm
\sqrt{p}$, then  by \cite[Theorem~6.1]{waterhouse:thesis},
\begin{equation}
  \label{eq:intro-4}
  \End^0(X_\pi)=\Q(\pi)
\end{equation}
for every abelian variety $X_\pi$ in the simple $\F_p$-isogeny class corresponding
to $\pi$, while \eqref{eq:intro-4} does not hold for the Weil $p$-numbers
$\pm \sqrt{p}$. Consequently, many theories for abelian varieties over
$\F_p$ have to make an exception for the isogeny class corresponding
to $\pm\sqrt{p}$. See \cite[\S1.3]{centeleghe-stix} and
\cite[Theorem~0.3]{Lipnowski-Tsimerman}. Recently, results for
counting $\ppav(\pi)$ with $\pi$ being a Weil $p$-number distinct from
$\pm \sqrt{p}$ are obtained in \cite{Achter-Rud-ANT2023}.

Now suppose that $\pi=\pm \sqrt{q}$ with $q=p^n$. There are two cases to consider. 
First suppose that $n$ is even. Then $X_\pi$ is a supersingular elliptic
   curve with
   \begin{equation}\label{eq:1.3}
\End^0(X_\pi)\simeq D_{p, \infty},     
   \end{equation}
where $D_{p, \infty}$ denotes the unique
   quaternion $\Q$-algebra ramified exactly at $p$ and $\infty$.
   Since each elliptic curve is equipped with the unique canonical principal
   polarization, we have a canonical bijection $\ppav(\pi)\cong
   \Isog(\pi)$. By \cite[Theorem~4.2]{waterhouse:thesis},  the endomorphism ring
   $\End(X_\pi)$ is a maximal order in $\End^0(X_\pi)$
   for every $X_\pi$, and conversely by \cite[Theorem~3.13]{waterhouse:thesis}, every maximal order in $D_{p,
     \infty}$ occurs as an endomorphism ring. 
   A classical result of Deuring \cite{Deuring-1941}
   and later re-interpreted by Waterhouse
   \cite[Theorem~4.5]{waterhouse:thesis} shows that for $\pi\in \{\pm p^{n/2}\}$
   with  $n\in 2\bbZ_{>0}$, we have
   \begin{equation} 
     \label{eq:5}
\begin{split}
\abs{\ppav(\pi)}&=\abs{\Isog(\pi)}=h(D_{p, \infty})\\
    &=\frac{p-1}{12}+\frac{1}{4} \left(1-\left(\frac{-4}{p}\right)
\right)+ \frac{1}{3} \left(1-\left(\frac{-3}{p}\right) \right).
\end{split}     
   \end{equation}
   Here $h(D_{p, \infty})$ is the class number of $D_{p, \infty}$, which is
   first computed by Eichler
   \cite{eichler-CNF-1938}. Igusa \cite{igusa} also computed
   (\ref{eq:5}) using another method. 
   In fact, we know more about $\ppav(\pi)$ than just its
   cardinality.  Let\footnote{The superscript $^\pp$ stands for ``principal polarization''.}  $t^\pp(\pi)$ be the \emph{type number} of $\ppav(\pi)$, that
is, the number of isomorphism classes of 
 endomorphism rings $\End(X)$ as the isomorphism class $[X, \lambda]$ ranges over $\ppav(\pi)$:
\begin{equation}
  \label{eq:12}
t^\pp(\pi)\coloneqq\#\big(\{\End(X)\mid [X, \lambda]\in \ppav(\pi)\}/\!{\simeq}\big).
\end{equation}
   By the above discussion, for $\pi\in \{\pm p^{n/2}\}$
   with  $n$ even,  $t^\pp(\pi)$ coincides with the \emph{type number}
   $t(D_{p, \infty})$, which  was computed by Eichler
   \cite{eichler-CNF-1938} and Deuring
   \cite{Deuring1950} using different methods.  If $p=2, 3$, then $t(D_{p, \infty})=1$, and 
\begin{equation}
\label{eq:x6}
2t(D_{p, \infty})=h(D_{p, \infty})+
\left[\frac{1}{2}+\frac{1}{4}\left(1-\Lsymb{-4}{p}\right)\left(2-\Lsymb{2}{p}\right)\right]h(-p)\quad
\text{if } p\geq 5. 
\end{equation}
See also \cite[(1.10) and (1.11) and (2.5)]{Gross-Height-L-series},
\cite[Remark~3, p.~42]{Ibukiyama-Katsura-1994} and
\cite[(1.5) and (1.6)]{yu:sp-prime}.

Next, suppose that $\pi =\pm\sqrt{q}=\pm p^{n/2}$ with $n$ odd.
It is well known that the isogeny class corresponding to
$\pi$ consists of $\F_q$-simple abelian surfaces \cite[\S1,
Examples]{tate:ht}. In particular, the two sets
$\ppav(\pi)$ and $\Isog(\pi)$ can no longer be identified with each other. Thanks to \cite[Theorem
1]{Howe-Maisner-et-al2008}, $\ppav(\pi)\neq \emptyset$.
 Similar to
the previous case as in \eqref{eq:1.3},  the endomorphism algebra of $X_\pi$ is isomorphic to the unique
quaternion $\Q(\sqrt{p})$-algebra $D_{\infty_1, \infty_2}$ ramified exactly at the two infinite
places of $\Q(\sqrt{p})$ and splits at all finite places, so we write
\begin{equation}
  \label{eq:23}
\End^0(X_\pi)\simeq D_{\infty_1, \infty_2}.
\end{equation} However, 
in the present case, $\End(X_\pi)$ is no longer
necessarily a maximal order in $\End^0(X_\pi)$  by \cite[Theorem~6.2]{waterhouse:thesis}, which causes new
difficulties.  The  explicit formula for $\abs{\Isog(\sqrt{p})}$ was previously computed by the
present authors together with Tse-Chung Yang in 
\cite{xue-yang-yu:ECNF, xue-yang-yu:num_inv}, and later 
extended to a 
formula for $\abs{\Isog(\sqrt{q})}$ by the present authors in
\cite[Theorem~4.4]{xue-yu:counting-av}.
The type number (i.e.~the number of endomorphism rings up to isomorphism) of
 $\Isog(\sqrt{p})$ is calculated in
\cite{xue-yu:type_no}.  The primary goal of the current paper is to produce the counterparts of \eqref{eq:5}
and  (\ref{eq:x6})
for the set $\ppav(\sqrt{p})$.






For every
square-free integer $d\in \bbZ$,  let $h(d)$ be the class number of the quadratic field
$\Q(\sqrt{d})$.  As usual, 
$\Lsymb{\cdot}{p}$ denotes the Legendre symbol, and $\zeta_F(s)$
denotes the
Dedekind zeta function of the quadratic real field
$F\coloneqq\Q(\sqrt{p})$. See Remark~\ref{rem:Bernoulli} for methods of
computing the special value $\zeta_F(-1)$.

\begin{thm}\label{thm:main}
  \begin{enumerate}[label=(\arabic*)]
\item  $|\ppav(\sqrt{p})|=1, 1, 2$ for $p=2,3,5$, respectively. 
  
\item For $p\geq 13$ and $p\equiv 1 \pmod 4$, 
\begin{equation}\label{eq:1}
  |\ppav(\sqrt{p})|=\left ( 9-2\left(\frac{2}{p}\right )\right )
   \frac{\zeta_F(-1)}{2}+ \frac{3h(-p)}{8}+\left (
   3+\left(\frac{2}{p}\right )\right ) \frac{h(-3p)}{6}. 
\end{equation}

\item  For $p\geq 7$ and $p\equiv 3 \pmod 4$, 
 \begin{equation}
   |\ppav(\sqrt{p})|=\frac{\zeta_F(-1)}{2}+\left(11-3\Lsymb{2}{p}\right)\frac{h(-p)}{8}+\frac{h(-3p)}{6}.
\end{equation}
  \end{enumerate}
\end{thm}
When $p\equiv 1\pmod{4}$, the set $\ppav(\sqrt{p})$ naturally
partitions into two subsets $\Lambda_1^\pp$ and $\Lambda_{16}^\pp$
according to the endomorphism ring of the underlying abelian surface by
Proposition~\ref{P.4}, and we can discuss the class number formulas
for $\Lambda_1^\pp$ and $\Lambda_{16}^\pp$ individually (see
\eqref{eq:99}--\eqref{eq:92}). This applies to the type number
formulas in Theorem~\ref{thm:type-num} below  as well.

\begin{thm}\label{thm:type-num}
The type number of $\ppav(\sqrt{p})$ is given as follows: 
  \begin{enumerate}[label=(\arabic*)]
\item  $t^\pp(\sqrt{p})=1, 1, 2$ for $p=2,3,5$, respectively. 

\item If $p\equiv 1 \pmod 4$ and $p\geq 13$, then 
\begin{equation}
t^\pp(\sqrt{p})=8 \zeta_F(-1) +
     \frac{h(-p)}{2}+ \frac{2h(-3p)}{3} . 
\end{equation}  
\item If $p\equiv 3 \pmod 4$ and $p\geq 7$, then 
\begin{equation}
t^\pp(\sqrt{p})=\frac{\zeta_F(-1)}{4}+\left(17-\Lsymb{2}{p}\right)\frac{h(-p)}{16}+\frac{h(-2p)}{8}+\frac{h(-3p)}{12}. 
\end{equation}  
\end{enumerate}
\end{thm}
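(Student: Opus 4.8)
The plan is to compute $t^\pp(\sqrt p)$ by the standard ``counting via the mass formula with automorphism-weight corrections'' strategy, running in parallel with the proof of Theorem~\ref{thm:main}. Recall that $\ppav(\sqrt p)\cong\ppsp(\sqrt q)$ decomposes, after fixing the quaternion $\Q(\sqrt p)$-algebra $D=D_{\infty_1,\infty_2}$ and using polarization modules, into a finite union of double-coset spaces indexed by the possible endomorphism orders $O\subset D$ (together with the polarization datum). Each $[X,\lambda]\in\ppav(\sqrt p)$ has a well-defined endomorphism ring $\End(X)$, an order in $D$, so there is a surjection $\ppav(\sqrt p)\onto\{\text{orders }O\}/{\simeq}$ whose fibers we must count; $t^\pp(\sqrt p)$ is the number of nonempty fibers. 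First I would recall from the proof of Theorem~\ref{thm:main} (and from \cite{xue-yu:type_no}, where the analogous unpolarized type number was handled) the explicit list of the relevant orders $O$ in $D$, organized by their local behavior at $2$, $3$, and at $p$; these are the orders that actually arise as $\End(X)$ for some $X$ in the isogeny class, subject to the constraint that $X$ admits a principal polarization.

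The key steps, in order, are: (1) Translate ``number of isomorphism classes of endomorphism rings'' into a sum, over the finitely many relevant $\Q$-isomorphism classes of orders $O$, of $[\![\text{the double coset space attached to }O\text{ is nonempty}]\!]$; equivalently, group the class-number formula \eqref{eq:1}--\eqref{eq:2} according to the contribution of each order-type and read off which groups are nonzero. (2) For the ``generic'' part, use the exact mass/class-number relation for the principal genus of maximal (or specified Eichler/Bass) orders: when every relevant local class number equals the corresponding type-number-local-factor, the type number is obtained from the class number by Eichler's trace-formula-type correction, exactly as in \eqref{eq:x6} for $D_{p,\infty}$. The main term $8\zeta_F(-1)$ (resp.\ $\zeta_F(-1)/4$) should come from the mass of the principal genus, which is a known multiple of $\zeta_F(-1)$ by the Siegel--Klingen/Shimura mass formula for $D_{\infty_1,\infty_2}$ over $\Q(\sqrt p)$. (3) Handle the ``special'' orders (those with extra automorphisms, i.e.\ $\Aut(X,\lambda)$ strictly larger than $\{\pm1\}$), which contribute the $h(-p)$, $h(-2p)$, $h(-3p)$ terms: here one counts optimal embeddings of the orders $\Z[\sqrt{-p}]$, $\Z[\sqrt{-2p}]$, $\Z[(1+\sqrt{-3p})/2]$ (and related ones) into the various $O$'s, via the local-global optimal-embedding formula, and uses the polarization-module bookkeeping to decide which embeddings are compatible with a principal polarization. (4) Assemble the pieces and simplify, using the split cases $p\equiv1$ vs.\ $p\equiv3\pmod4$ and the values $\Lsymb{-4}{p}$, $\Lsymb{-3}{p}$, $\Lsymb{2}{p}$; the small primes $p=2,3,5$ are done by hand from the explicit description of $\ppav(\sqrt p)$ already used in Theorem~\ref{thm:main}(1).

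The hard part will be step (3) combined with step (1): deciding precisely \emph{which} endomorphism orders are actually realized on a \emph{principally polarized} member (as opposed to merely a polarized one), since Theorem~\ref{thm:main} shows $\ppav(\sqrt p)$ is a proper and somewhat delicate subset of $\Sp(\sqrt p)$, and an order $O$ can occur in $\Sp(\sqrt q)$ without any principally polarized representative. This forces a careful case analysis of the principal-polarization condition in terms of the polarization module attached to each $O$ — essentially checking, for each order-type on the list, whether the relevant (narrow) class-group coset contains the unit that a principal polarization requires. A secondary subtlety is that, when $p\equiv1\pmod4$, the set splits as $\Lambda_1^\pp\sqcup\Lambda_{16}^\pp$ (Proposition~\ref{P.4}), so one must check that an order can contribute to at most one of the two pieces, or else account for orders shared by both; this is exactly parallel to the bookkeeping already done for the class number and I would recycle that analysis. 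Once the list of realized orders and their local data is pinned down, the remaining computation is the routine (if lengthy) evaluation of optimal-embedding numbers and the Eichler-type correction, which I would not spell out in full.
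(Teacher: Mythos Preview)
Your approach would eventually work, but it is far more laborious than necessary and misses the two structural facts that make the paper's proof almost immediate.

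For $p\equiv 1\pmod{4}$ (including $p=2,5$), the paper does \emph{no} new trace-formula or optimal-embedding computation at all. The point is Corollary~\ref{cor:Phi-bije-p1mod4}: the map $\Upsilon^\pp:\Lambda_r^\pp\to\Tp(\grO_r)$ is a \emph{bijection} for $r\in\{1,16\}$. Since orders in $\Tp(\calO_1)$ and $\Tp(\calO_{16})$ have different centers ($O_F$ versus $A$), the two type sets are disjoint, and therefore $t^\pp(\sqrt p)=\lvert\Lambda_1^\pp\rvert+\lvert\Lambda_{16}^\pp\rvert=h^\pp(\sqrt p)$. The type number formula is literally the class number formula already obtained in Theorem~\ref{thm:main}, read off from \eqref{eq:77} and \eqref{eq:92}. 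Your ``hard part'' --- deciding which orders are realized on a principally polarized member --- evaporates: every type in each genus is hit exactly once.

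For $p\equiv 3\pmod{4}$, the question you flag is real but is already answered in the paper by Lemma~\ref{lem:spinor-genus} and Corollary~\ref{cor:end-ring-p3mod4}: an order $\bbO$ occurs as $\End(X)$ for some $[X,\lambda]\in\ppav(\sqrt p)$ if and only if $\bbO$ lies in the \emph{principal spinor genus}, i.e.\ $\dbr{\bbO}\in\Tp^+(D)$. Hence $t^\pp(\sqrt p)=\lvert\Tp^+(D)\rvert$, and this spinor-genus type number is quoted from \cite[Theorem~4.8]{xue-yu:spinor-class-no} as \eqref{eq:79}. The optimal-embedding and spinor-trace machinery you propose is exactly what goes into proving that cited result, but the present paper treats it as a black box rather than redoing it.

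In short: your plan rebuilds the type-number formulas from scratch via Eichler's trace formula, whereas the paper reduces everything to two already-established facts --- a bijection (Corollary~\ref{cor:Phi-bije-p1mod4}) when $p\equiv 1\pmod 4$, and a spinor-genus identification (Corollary~\ref{cor:end-ring-p3mod4}) when $p\equiv 3\pmod 4$ --- and then simply quotes the relevant numbers.
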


Naturally, we would like to generalize Theorems~\ref{thm:main} and
\ref{thm:type-num} to Weil numbers $\pi=\pm \sqrt{q}$, where $q=p^n$
is an arbitrary \emph{odd} power of $p$.  Instead of obtaining a
complete formulas for $\ppav(\sqrt{q})$, we shall restrict ourselves
to its subset of superspecial members in this paper. 
Recall that an abelian variety over a field $k$ of characteristic $p$
is 
{\it supersingular} if it is $\bar k$-isogenous to a product of supersingular
elliptic curves over an algebraic closure $\bar k$ of $k$; it is {\it
  superspecial} \cite[\S1.7]{li-oort} if it is $\bar k$-isomorphic to a product of supersingular
elliptic curves over $\bar k$.  In the previous papers
\cite{xue-yang-yu:sp_as, xue-yang-yu:sp_as2, yu:sp-prime, xue-yu-zheng:spIII}, the authors
calculated the number of isomorphism classes of (unpolarized)
superspecial abelian surfaces over $\Fq$ for every prime power $q$.
By the Manin-Oort Theorem \cite[Theorem~2.9]{Yu-End-QM-2013}, each
$X/\F_q$ in the isogeny class corresponding to $\pi=\pm \sqrt{q}$ is
supersingular.  Moreover, if $q=p$, then each $X$ is superspecial by the proof
of \cite[Theorem~6.2]{waterhouse:thesis}. In general, for  an 
\emph{odd} power $q$ of $p$, let $\ppsp(\sqrt{q})$ be the following  subset
of $\ppav(\sqrt{q})$:
\[\ppsp(\sqrt{q})\coloneqq\{[X, \lambda]\in
\ppav(\sqrt{q})\mid X \text{ is  superspecial}\}.\] 
Similarly, let $\Sp(\sqrt{q})\subseteq \Isog(\sqrt{q})$ be the subset consisting of the superspecial abelian surfaces.
We show in  Corollary~\ref{cor:equiv-cat} that the base change functor
$-\!\otimes_{\F_p}\F_q$ induces canonical identifications 
\begin{equation} \label{eq:PPSP=PPAV}
   \Isog(\sqrt{p}) \isoto \Sp(\sqrt{q}) \quad \text{and} 
   \quad \ppav(\sqrt{p}) \isoto \ppsp(\sqrt{q}),
\end{equation}
which preserve the endomorphism rings. 
In particular, Theorems~\ref{thm:main} and \ref{thm:type-num} also provides 
explicit class number and type number formulas for $\ppsp(\sqrt{q})$.


Same as the case of $\Isog(\sqrt{p})$, not every member in $\Sp(\sqrt{q})$ is principally
polarizable.  Thus, the underlying abelian surfaces of
$\ppsp(\sqrt{q})$ form a proper subset of $\Sp(\sqrt{q})$ in general.
For the purpose of considering all members of $\Sp(\sqrt{q})$ equipped
with a polarization, we make use of \emph{polarization modules}, which
are fundamental invariants appearing in the theory of
Hilbert-Blumenthal varieties \cite{Rapoport-thesis,
  Deligne-Pappas-1994, yu:reduction}. We shall recall the
notion of $(P, P_+)$-polarized abelian varieties in  \S\ref{sec:PP.1};
see also \cite[\S
X.1]{van-der-Geer-HMS} and \cite[\S2.2.2]{Goren:HibertVar-book}.  Given a member
$X\in \Sp(\sqrt{q})$, we put $R\coloneqq\Q(\sqrt{p})\cap \End(X)$, 
 which is
an order in $F=\Q(\sqrt{p})$. It is shown in
Corollary~\ref{cor:prin-polar-module} that $X$ is principally
polarizable if and only if its polarization module $\ul \calP(X)$
represents the trivial element of the narrow class group $\Pic_+(R)$. As a
result, our formulas for $\abs{\ppav(\sqrt{p})}$ may be regarded as
special cases of class number formulas for the isomorphic classes of
$(P, P_+)$-polarized superspecial abelian surfaces in the simple
$\F_q$-isogeny class corresponding to $\pi=\sqrt{q}$.  Moreover, the
same method for computing $\abs{\ppav(\sqrt{p})}$ applies to these
polarized abelian surfaces as well, and the resulting formulas are
produced in Theorem~\ref{thm:Pol-mod}.  Surprisingly, these class
number formulas coincide with the formulas for the arithmetic genus of
certain Hilbert modular surfaces. One example is given as follows.

\begin{ex}
   Let $O_F$ be the ring of integers of $F=\Q(\sqrt{p})$, and $\gra\subset F$ be a nonzero fractional $O_F$-ideal. Let $\SL(O_F\oplus \gra)$ be the stabilizer of $O_F\oplus \gra$
  in $\SL_2(F)$,  and $\Gamma\coloneqq\PSL(O_F\oplus \gra)=\SL(O_F\oplus \gra)/\{\pm 1\}$. The Hilbert modular surface $Y_\Gamma$
  is defined to be the minimal
  non-singular model of the compactification of $\Gamma\bsh \calH^2$
  \cite[\S II.7]{van-der-Geer-HMS}, where $\calH\subset \mathbb{C}$ denotes the Poincare
  upper half plane as usual. Up to isomorphism, $Y_\Gamma$ depends only on the  Gauss genus of $\gra$ (namely, the coset $\Pic_+(O_F)^2[\gra]_+$ inside the narrow class group $\Pic_+(O_F)$, see \eqref{xeq:11x}) and not on the choice of $\gra$ itself \cite[\S I.4, p.~12]{van-der-Geer-HMS}.  Following \cite[\S III.1, p.~46]{van-der-Geer-HMS}, the arithmetic genus of $Y_\Gamma$ is defined to be $\sum_{i=0}^2 (-1)^i h^i(\calO_{Y_\Gamma})$, where $h^i(\calO_{Y_\Gamma}):=\dim_{\bbC} H^i(Y_\Gamma, \calO_{Y_{\Gamma}})$. This is a birational invariant. In particular, 
  we may take $Y_\Gamma$ to be any compact non-singular model of $\Gamma\bsh \calH^2$ in the discussion below.

  
  Suppose for the moment that $p\equiv 3\pmod{4}$, and $\gra$ belongs to the unique nonprincipal Gauss genus, i.e.~the narrow (strict) ideal class $[\gra]_+\in \Pic_+(O_F)$ is not of the form $[\grb^2]_+$ for any fractional $O_F$-ideal $\grb$.  Comparing Theorem~\ref{thm:main} with the formula for the arithmetic genus $\chi(Y_\Gamma)$ in \cite[Theorems~II.5.8--9]{Freitag-HMF}, we immediately find that
  \begin{equation}
      \chi(Y_\Gamma)=\abs{\ppav(\sqrt{p})}. 
  \end{equation}
  
  Varying $p$, $\gra$ and $\Gamma$, we are able to observe several similar identities 
involving the arithmetic genus of a Hilbert modular surface on one side and the class number of certain kind of $(P, P_+)$-polarized superspecial abelian surfaces on the other side.  See Remark~\ref{rem:arith-genus} for the precise identities.   
\end{ex}

 
To explain the strategy for computing $\abs{\ppav(\sqrt{p})}$, we draw an analogy between lattices in quadratic spaces and polarized abelian varieties. Let $K$ be a number field,  $(V, Q)$ be a  finite dimensional nondegenerate quadratic $K$-space, and  $G=O(V, Q)$ be the corresponding orthogonal group. Two $O_K$-lattices $L, L'$ in $V$ are \emph{isometric} if and only if there exists $g\in G(K)$ such that $L'=gL$. 
Suppose that we want to compute the number $H_{\mathrm{uni}}(V, Q)$ of isometric classes of unimodular (i.e.~self-dual) lattices in $V$.  One way to do this is to first separate the set of unimodular lattices in $V$ into genera. Following \cite[\S102]{o-meara-quad-forms}, two $O_K$-lattices $L, L'$ in $(V, Q)$ are said to \emph{belong to the same genus} if for every finite prime $\grp$ of $K$, the $\grp$-adic completions $L_\grp$ and $L'_\grp$ are isometric. The genus $\scrG(L)$  is defined to be the set of all $O_K$-lattices $L'\subset V$ belonging to the same genus as $L$. We write $\Lambda(L)\coloneqq G(L)\bsh \scrG(L)$ for the set of isometric classes of lattices within the genus $\scrG(L)$, and put $h(L)\coloneqq \abs{\Lambda(L)}$.   Let $\whK$ be the ring of finite adeles of $K$. 
The group $G(\whK)$  acts transitively on $\scrG(L)$, thus inducing a bijection 
\begin{equation}
    \Lambda(L)\simeq G(K)\bsh G(\whK)/U_G(L),
\end{equation}
where $U_G(L)\coloneqq \Stab_{G(\whK)}(L)$ denotes the stabilizer of $L$ in $G(\whK)$. 
Therefore,  $h(L)$ is equal to the class number $h(G, U_G(L)):=|G(K)\bsh G(\whK)/U_G(L)|$ of $G=O(V,Q)$ with respect to the open compact subgroup $U_G(L)$ as explained also in \cite[Proposition~8.4]{Platonov-Rapinchuk}. From the local classification of quadratic lattices \cite[\S92--\S93]{o-meara-quad-forms}, there are only finitely many genera of unimodular $O_K$-lattices in $(V, Q)$. If we pick a representative $L_i$ from each genus $\scrG_i$ and write $m$ for the total  number of genera of such lattices, then 
\begin{equation}
    H_{\mathrm{uni}}(V, Q)=\sum_{i=1}^m h(L_i)=\sum_{i=1}^m h(G, U_G(L_i)). 
\end{equation}


For our analogy, the role of a quadratic $O_K$-lattice $(L, Q)$ is played by that of a polarized abelian variety $(X, \lambda)/\F_q$.
For each prime $\ell$ (including $\ell=p$), we attach a quasi-polarized $\ell$-divisible group $(X(\ell), \lambda_\ell)$ to $(X, \lambda)$, just like we attach a local lattice $(L_\grp, Q_\grp)$ to $(L, Q)$ for each finite prime $\grp$ of $K$.  Equivalently, we can consider the quasi-polarized Tate-module or Dieudonn\'e-module incarnations of the $\ell$-divisible groups, which really are linear objects.  The notion of the quadratic space $(V, Q)$ is replaced by that of a \emph{$\Q$-isogeny class of polarized abelian varieties}.   Given two polarized abelian varieties $(X_1, \lambda_1)$ and $(X_2, \lambda_2)$ over $\F_q$, 
a $\Q$-isogeny $\varphi: (X_1, \lambda_1)\to (X_2, \lambda_2)$ is an element $\varphi \in \Hom(X_1, X_2)\otimes \Q$ such that $\lambda_1=\varphi^*\lambda_2\coloneqq \varphi^t\circ\lambda_2\circ \varphi$. Two $\Q$-isogenous  polarized abelian varieties $(X_1, \lambda_1)$ and $(X_2, \lambda_2)$ over $\F_q$ are said to \emph{belong to the same genus} if $(X_1(\ell), \lambda_{1, \ell})$ is isomorphic to  $(X_2(\ell), \lambda_{2,\ell})$ for every prime $\ell$. A more in-depth formulation of these concepts will be given in \S\ref{sec:method-cal}. Given a polarized abelian variety $(X, \lambda)/\F_q$, we can attach  a linear algebraic $\Q$-group $G^1$ to its $\Q$-isogeny class (see \eqref{eq:20}), which shall play the role of the othorgonal group $O(V, Q)$.  
Let $\scrG(X, \lambda)$ be the genus of $(X, \lambda)$, and $\Lambda(X, \lambda)$ be the isomorphism classes of polarized abelian varieties within $\scrG(X, \lambda)$.  We then have a transitive  action of $G^1(\whQ)$ on $\scrG(X, \lambda)$, which gives rise to a bijection 
\begin{equation}\label{eq:1.16}
    \Lambda(X, \lambda)\simeq G^1(\Q)\bsh G^1(\whQ)/U_{G^1}(X, \lambda), 
\end{equation}
where $U_{G^1}(X, \lambda)$ denotes the stabilizer of $(X, \lambda)$ in $G^1(\whQ)$ as usual.  The computation of the number of isomorphism classes of polarized abelian varieties within a genus is once again reduced to the computation of the class number of a linear algebraic group with respect to an open compact subgroup.

For the computation of $\abs{\ppav(\sqrt{p})}$, we put $F=\Q(\sqrt{p})$ as before and write $D$ for the totally definite quaternion $F$-algebra $D_{\infty_1, \infty_2}$ in \eqref{eq:23}. 
It is straightforward to show that all polarized abelian surfaces $(X, \lambda)/\F_p$ with the Frobenius endomorphism $\pi_X$ satisfying $\pi_X^2=p$ form a single $\Q$-isogeny class; see Lemma~\ref{P.2}. From \S\ref{sect:neron-severi}, the linear algebraic group $G^1$ attached to this $\Q$-isogeny class is the reduced norm one subgroup of the multiplicative groupu $\ul D^\times$. 
 After some symplectic local lattice classification that is worked out in \S\ref{sec:LC}, we quickly find in Proposition~\ref{P.4} that  $\ppav(\sqrt{p})$ forms a single genus when $p\not\equiv 1\pmod{4}$,   and it partitions into two different genera $\Lambda_1^\pp$ and $\Lambda_{16}^\pp$ when $p\equiv 1\pmod{4}$. For uniformity we also put  $\Lambda_1^\pp\coloneqq \ppav(\sqrt{p})$ when $p\not\equiv 1\pmod{4}$. 
 Let us pick a representative $[Y_1, \lambda_1]\in \Lambda_1^\pp$ for every prime $p$ and denote $\grO_1\coloneqq \End(Y_1)$. Similarly, if $p\equiv 1\pmod{4}$, we pick a representative $[Y_{16}, \lambda_{16}]\in \Lambda_{16}^\pp$ and denote $\grO_{16}\coloneqq \End(Y_{16})$. Here the subscripts $r=1, 16$ are chosen so that $[\bbO: \grO_r]=r$ for every maximal $O_F$-order $\bbO$ containing $\grO_r$; see \eqref{eq:end0index}. Let $\whD$ be the ring of finite adeles of $D$.  For any subset $\scrS$ of $\whD$, we write $\scrS^1$ for its subset of elements of reduced norm one, that is, $\scrS^1\coloneqq \{x\in \scrS\mid \Nr(x)=1\}$.  In particular, $G^1(\Q)=D^1$ and $G^1(\whQ)=\whD^1$. It turns out that the stabilizer group $U_{G^1}(Y_r, \lambda_r)$ coincides with $\wh\grO_r^1$, where $\wh\grO_r$ denotes the profinite completion of $\grO_r$.
 From \eqref{eq:1.16}, there is  a bijection 
 \begin{equation}\label{eq:1.17}
     \Lambda_r^\pp\simeq D^1\bsh \whD^1/\wh\grO_r^1,\qquad \forall\:  r\in \{1, 16\}. 
 \end{equation} 
Denote the cardinality of the double coset space in \eqref{eq:1.17} by $h^1(\grO_r)$. We then have   \begin{equation}\label{eq:1.18}
    \abs{\ppav(\sqrt{p})}=\begin{cases}
        h^1(\grO_1)  & \text{if } p\not\equiv 1\pmod{4};\\
        h^1(\grO_1)+h^1(\grO_{16}) &\text{if } p\equiv 1\pmod{4}.
    \end{cases}
\end{equation}


Several challenges arise when we try to compute the right hand side of  \eqref{eq:1.18} explicitly.  First, we need to produce an order $\grO_r\subset D$ that occurs as $\End(Y_r)$ for some $[Y_r, \lambda_r]\in \Lambda_r^\pp$,  or at least to describe such an order  explicitly enough so that the computation of $h^1(\grO_r)$ is viable. Recall that two $\Z[\sqrt{p}]$-orders $\calO, \calO'$ in $D$ are said to \emph{belong to the same genus} if there exists $x\in \whD^\times$ such that $\wcO= x\wcO' x^{-1}$, or equivalently, if $\calO_\ell\simeq \calO'_\ell$ for every prime $\ell\in\bbN$. From Tate's theorem, we can determine explicitly the genera of $\grO_1$ and $\grO_{16}$, that is,  the local descriptions of $\grO_r\otimes \Z_\ell$ for all $r\in \{1, 16\}$ and all prime $\ell$; see \S\ref{sect:end-ring}.  For example, this precisely the reason why we known  that $\grO_1$ is always a maximal $O_F$-order in $D$.  Conversely from \cite[Theorem~3.13]{waterhouse:thesis}, given a maximal $O_F$-order $\bbO\subset D$,  there exists $[X]\in \Isog(\sqrt{p})$ such that $\End(X)\simeq \bbO$.  However, there is no guarantee in general that we can find such an $X$  that is principally polarizable. 
Thus, we need to find a finer classification of orders within a fixed genus of $D$. 
To solve this problem we make use  of the notion of \emph{spinor genus of orders}. From   Definition~\ref{defn:spinor-genus-class}, two $\Z[\sqrt{p}]$-orders $\calO, \calO'$ in $D$ are said to \emph{belong to the same spinor genus} if there exists $x\in D^\times\whD^1$ such that $\wcO= x\wcO' x^{-1}$. Clearly this defines an equivalence
relation finer than ``being in the same genus''.  The key Lemma~\ref{lem:spinor-genus} shows that if $\calO$ and $\calO'$ are two orders in $D$ belonging to the same spinor genus and $\calO$ occurs as the endomorphism ring of some  principally polarizable member of $\Isog(\sqrt{p})$, then so does $\calO'$.  We then determine the spinor genera of  endomorphism rings of  principally polarizable members of $\Isog(\sqrt{p})$ in Proposition~\ref{prop:spinor-gen-classif} and Lemma~\ref{lem:end-ring-p3mod4}, which is equivalent to finding out the desired $\grO_r$.

Now comes the real challenge of actually computing $h^1(\grO_r)$. 
Since $D$ is totally definite, $G^1(\bbR)$ is compact, and hence strong approximation fails for $G^1$. For a long time, the only systematic  way to compute $h^1(\grO_r)$ is via the Selberg trace formula, which requires heavy analysis of orbital integrals and can be unwieldy to apply.  The situation is quite different if the group $G^1=\ul D^1$ is replaced by the multiplicative group $\ul D^\times$.  More generally, let $\calF$ be an arbitrary totally real number field,  $\calD$ be a totally definite quaternion $\calF$-algebra, and $\calO$ be a $O_\calF$-order in $\calD$. By definition, the class number of the multiplicative group $\ul \calD^\times$ with respect to $\wcO^\times$ is given by  
\[h(\calO)\coloneqq h(\ul \calD^\times, \wcO^\times)=\abs{\calD^\times\bsh \wh\calD^\times/\wcO^\times}.\] 
The class number $h(\calO)$ can be calculated by the Eichler class number formula
\cite[Corollaire V.2.5,
p.~144]{vigneras} \cite[Theorem~30.8.6]{voight-quat-book} thanks to the work of Eichler \cite{eichler:crelle55}, Vigneras \cite{vigneras}, K\"orner \cite{korner:1987} and others. A key result underlying this formula is the trace formula \cite[Theorem~III.5.1]{vigneras} for optimal embeddings from a quadratic $O_\calF$-order $B$ into orders in the genus of $\calO$. 
Eichler's formula has also been generalized by the current authors together with Tse-Chung Yang to an arbitrary $\Z$-order of full rank in $\calD$, and this generalization plays a crucial role for the computation of $\abs{\Isog(\sqrt{p})}$ in \cite{xue-yang-yu:ECNF}. 
Therefore, it is highly desirable to have a class number formula for $h^1(\calO)=h(\ul\calD^1, \wcO^1)$ of a \emph{similar shape} to the Eichler class number formula. The advantage of this approach avoids complicated computations of orbital integrals in the Selberg trace formula.   
As explained in the previous paragraph, the class number $h^1(\calO)$ depends only on the spinor genus of $\calO$.
Thus to obtain an 
Eichler kind class number formula for $h^1(\calO)$, 
it is essential to figure out how the spinor genus information (which boasts a mixture of local and global flavor) would manifest itself in the desired formula. It is at this critical juncture where the spinor selectivity theory comes in.


The selectivity theory is first formulated by Chinburg and Friedman
\cite{Chinburg-Friedman-1999} as an refined integral version of the
Hasse-Brauer-Noether-Albert Theorem \cite[Theorem~III.3.8]{vigneras}, 
and has since been further developed by many people. See
\cite[\S31.7.7]{voight-quat-book} for a historical account. We give a brief summary of this theory in \S\ref{subsec:oss}.  Except for \cite{Arenas-Carmona-Spinor-CField-2003} and \cite[Remark, p.~99]{M.Arenas-et.al-opt-embed-trees-JNT2018}, most of the literature on selectivity theory focus on quaternion algebras that satisfies the Eichler condition (i.e.~the case where strong approximation holds).  For the current purpose, the current authors developed the selectivity theory for totally definite quaternion algebras  in more depth  and obtained in \cite{Xue-Yu-Selec-2022} an refinement for the trace formula for optimal embeddings, called the {\it spinor trace formula}. In the sequel work \cite{xue-yu:spinor-class-no} based on the Selberg trace formula and spinor trace formula we deduced a formula for $h^1(\calO)$ of Eichler type for all Eichler orders in totally definite quaternion algebras; see \cite[Corollary 4.2] {xue-yu:spinor-class-no} and \eqref{xeq:214}. 
The proof of this formula has  greatly simplified by Voight
\cite[Appendix A]{xue-yu:spinor-class-no}. 
Using this formula we compute the explicit formula for $h^1(\grO_1)$.



Note that the general formula  for $h^1(\calO)$ developed in \cite{xue-yu:spinor-class-no} can only be applied to the maximal order $\grO_1$, which alone is not enough to compute $\abs{\ppav(\sqrt{p})}$ when $p\equiv 1\pmod{4}$. 
Indeed, in the latter case, $\grO_{16}\cap F=\Z[\sqrt{p}]\neq O_F$, so $\grO_{16}$ is not an $O_F$-order. 
This calls for some ad-hoc methods. Luckily, we are able to show in \S\ref{sec:class-number} that $h^1(\grO_{16})=h(\grO_{16})/h(\Z[\sqrt{p}])$, where $h(\grO_{16})$ has been previously computed in \cite{xue-yang-yu:ECNF}. The establishment  of this equality  relies on the fact that $h(\Q(\sqrt{p}))$ is odd by \cite[Corollary~18.4]{Conner-Hurrelbrink}.  This completes a summary of the computation of $\abs{\ppav(\sqrt{p})}$. The type number formulas are obtained along the way using class-type number relations; see Lemma~\ref{lem:bijection-prin-type} for example.  A survey of current results of this paper is provided in \cite{xue-yu-RIMS-2022}.

This paper is organized as follows. In \S\ref{sec:method-cal} we
explain our strategy for computing $\abs{\ppav(\pi)}$, which is a
simpler version of the Langlands-Kottwitz method.  This strategy is divided into three steps as described in \S\ref{sect:3-steps}.  The first two steps is
carried out in detail for $\pi=\pm \sqrt{p}$ in
\S\ref{sec:computing-ppav}, and the third step in
\S\ref{sec:S4}, 
except that some class number calculations
are postponed to \S\ref{sec:max-ord}--\ref{sec:class-number}, and certain symplectic
lattice classification is postponed to \S\ref{sec:LC}. In
\S\ref{sec:PP}, we recall the notion of polarization modules, and
extend the results of \S\ref{sec:computing-ppav}--\ref{sec:S4} to produce both class
number and type number formulas for $(P, P_+)$-polarized abelian
surfaces in the simple $\F_p$-isogeny class corresponding to
$\pi=\pm\sqrt{p}$. 




{\bf Notation.} As usual, $\Z_\ell$ denotes the $\ell$-adic completion of $\Z$ at the prime $\ell$. Let  $\wh \Z=\prod_\ell \Z_\ell$ be the pro-finite
completion of $\Z$,  and $\whQ$ the finite adele ring of $\Q$.  If $M$ is a
finitely generated
$\Z$-module or a finite dimensional  $\Q$-vector space, we put
$M_\ell\coloneqq M\otimes_{\Z} \Z_\ell$ and $\wh M\coloneqq M\otimes_{\Z} \wh \Z$. The unique quaternion $\Q$-algebra ramified exactly at
$p$ and $\infty$ is denoted  by $D_{p, \infty}$. Similarly,  the unique
quaternion $\Q(\sqrt{p})$-algebra ramified exactly at the two infinite
places is denoted by $D_{\infty_1, \infty_2}$. For simplicity, we often put
$D\coloneqq D_{\infty_1, \infty_2}$. 
If $B$ is a finite-dimensional $\Q$-algebra, we denote
by $\ul B^\times$ the algebraic $\Q$-group that represents the functor
$R\mapsto (B\otimes_{\Q} R)^\times$ for any commutative $\Q$-algebra
$R$. The group $\ul B^\times$ is called the multiplicative algebraic $\Q$-group
of $B$. If $K=\prod K_i$ is a finite product of number fields or non-archimedean local
fields, then we denote by $O_K$ the maximal order of
$K$.  For any abelian variety $X$ defined over a finite field $\Fq$, we denote by
$\pi_X$ the Frobenius endomorphism of $X$ over $\Fq$. 


\section{Method of calculation}
\label{sec:method-cal}


Given a Weil $q$-number $\pi$, the study of $\Isog(\pi)$ in terms of
ideal classes of certain orders goes back to Waterhouse
\cite{waterhouse:thesis}. The general method for counting abelian
varieties equipped with a PEL-type structure in an isogeny class is
developed by Langlands \cite{Langlands-1973} and Kottwitz
\cite{kottwitz:michigan1990, Kottwitz-JAMS1992} for studying the Hasse-Weil zeta
functions of Shimura varieties.  Employing this method, Lipnowski and
Tsimerman \cite{Lipnowski-Tsimerman} give asymptotic upper bounds for the number
of isomorphism classes of $g$-dimensional PPAVs over $\F_p$ for a
fixed $p$ as $g\to \infty$. 
Nontrivial lower bounds is obtained by Jungin Lee \cite{Lee-IMRN2022}
in a follow-up work. 
Recently,
results for counting $\ppav(\pi)$ with $\pi$ being a Weil $p$-number
distinct from $\pm \sqrt{p}$ are obtained by Achter et~al.~\cite{Achter-Rud-ANT2023}  in terms of a product of local densities and the Tamagawa number of a CM torus. Bergstr\"on, Karemaker and Marseglia \cite{BKM-IMRN-2023} obtained an algorithmic breakthrough 
on counting principally polarized abelian varieties with commutative endomorphism algebras over finite fields of small cardinalities. A similar study was previously carried out by Marseglia in \cite{Marseglia:MathC2021}.  In a slight different direction, Oswal and Shankar \cite{OS-JLMS-2020} obtained explicit descriptions of almost ordinary abelian varieties over finite fields of odd characteristic that can be applied to counting problems.



For the
purpose of this paper, we follow a variant of this  method in
\cite{xue-yu:counting-av}, which is previously developed by the second
named author in \cite{yu:smf}. This  method treats both the
unpolarized case and the principally polaried case uniformly and
expresses the cardinalities as sums of class numbers
of certain linear
algebraic $\Q$-groups.  As the method is built
upon Tate's
theorem (due to Tate, Zarhin, Faltings and de Jong), its key part 
is appliable to  any \emph{finitely generated} ground field $k$ (that
is, finitely generated over its prime subfield).

Given an  abelian variety  $X$ over $k$ and a prime number $\ell$ (not
necessarily distinct from the characteristic of $k$), we write 
$X(\ell)$ for the $\ell$-divisible group
$\varinjlim X[\ell^n]$ attached to $X$. A
\emph{$\Q$-isogeny}  $\varphi: X_1\to X_2$ between two abelian
varieties over $k$ is an element $\varphi\in \Hom(X_1,
X_2)\otimes \Q$ such that $N\varphi$ is an isogeny for some $N\in \bbN$. 
Similarly, one defines the notion of $\Q_\ell$-isogenies between
$\ell$-divisible groups. Clearly, a $\Q$-isogeny $\varphi$ induces a $\Q_\ell$-isogeny $\varphi_\ell:
X_1(\ell)\to X_2(\ell)$ for each prime $\ell$, and $\varphi_\ell$ is an 
isomorphism for almost all $\ell$.  

Fix an abelian variety $X_0$ over $k$. Two $\Q$-isogenies $\varphi_1:
X_1\to X_0$ and $\varphi_2: X_2\to X_0$ are said to be \emph{equivalent} if
there exists an isomorphism $\theta: X_1\to X_2$ such that
$\varphi_2\circ \theta=\varphi_1$. 
Let $\Qisog(X_0)$ be the set of equivalence classes of $\Q$-isogenies
$(X, \varphi)$ to $X_0$. By an abuse of notation, we still write $(X,
\varphi)$ for its equivalence class.  The endomorphism ring
$\End(X)$ is realized as a canonical suborder of 
$\End^0(X_0)$ 
via the isomorphism
\begin{equation}
  \label{eq:15}
  \End^0(X)\xrightarrow{\simeq} \End^0(X_0),\qquad a\mapsto \varphi a \varphi^{-1}.
\end{equation}
The set $\Qisog(X_0)$ contains a
distinguished element $(X_0, \id_0)$, where $\id_0$ is the 
 identity map of $X_0$. For any $\Q$-isogeny  $\varphi_1: X_1\to X_0$, there is a bijection 
 \begin{equation}
   \label{eq:6-s2}
\Qisog(X_0)\to \Qisog(X_1), \qquad (X, \varphi)\mapsto (X,
\varphi_1^{-1}\varphi).    
 \end{equation}
Therefore, we may change the base abelian variety $X_0$ to suit our
purpose whenever necessary. 
 Similarly, one defines $\Qisog(X_0(\ell))$
 for every prime $\ell$. 

 Let $G$ be the algebraic $\Q$-group
 representing the functor 
\begin{equation}
  \label{eq:17}
R\mapsto G(R)\coloneqq (\End^0(X_0)\otimes_\Q R)^\times     
\end{equation}
for every commutative 
  $\Q$-algebra $R$. Up to isomorphism, $G$ depends only on the isogeny
  class of $X_0$. We have 
  $G(\Q_\ell)=(\End(X_0(\ell))\otimes_{\Z_\ell} \Q_\ell)^\times$ by Tate's
  theorem. Let
  $\whQ\coloneqq \whZ\otimes_{\Z} \Q$ be the ring of finite adeles of $\Q$.  From
  \cite[Lemma~5.2]{xue-yu:counting-av}, there is an
  action of $G(\whQ)$ on $\Qisog(X_0)$ as follows: for any $(X, \varphi)\in \Qisog(X_0)$ and any $\alpha=(\alpha_\ell)\in
    G(\whQ)$, the member $(X', \varphi')\coloneqq \alpha(X, \varphi)$
    is uniquely characterized by the following equality in $\Qisog(X_0(\ell))$ for every prime $\ell$:
    \begin{equation}
      \label{eq:16}
       (X'(\ell), \varphi_\ell')=(X(\ell), \alpha_\ell\varphi_\ell). 
     \end{equation}
     In particular, if $\alpha\in G(\Q)$, then $\alpha (X,
     \varphi)=(X, \alpha \varphi)$. 
From Tate's theorem and the identification (\ref{eq:15}), we obtain 
          \begin{equation}
       \label{eq:29}
       \End(X')\otimes \whZ=\alpha(\End(X)\otimes \whZ) \alpha^{-1}. 
     \end{equation}
    If 
    $\Qisog(X_0)$ is equipped  with the discrete topology, then the action of
    $G(\whQ)$ on $\Qisog(X_0)$ is continuous and proper. Indeed, the
    stabilizer of any $(X, \varphi)\in \Qisog(X_0)$ coincides with the open compact
    subgroup  $(\End(X)\otimes \whZ)^\times\subset G(\whQ)$.


  \begin{defn}\label{defn:adelic-action-on-Qisog}
    Let $H\subseteq G$ be an algebraic subgroup of $G$ over $\Q$. Two
    members $(X_i, \varphi_i)\in \Qisog(X_0)$ for $i=1,2$ are said to
    be in the \emph{same $H$-genus} if there exists $\alpha\in
    H(\whQ)$ such that $(X_2, \varphi_2)=\alpha(X_1,
    \varphi_1)$. They are said to be
    \emph{$H$-isomorphic} if there
    exists $\alpha\in H(\Q)$ such that $(X_2, \varphi_2)=(X_1,
    \alpha\varphi_1)$. 
  \end{defn}

Thus $\Qisog(X_0)$ is partitioned into $H$-genera, and each
  $H$-genus is further subdivided into $H$-isomorphism classes. In
  particular, it makes sense to talk about two $H$-isomorphism classes
  $[X_i, \varphi_i]_{i=1,2}$ belonging to the same $H$-genus.

  \begin{prop}\label{prop:H-genus-isom}
    Let $\scrG_H(X, \varphi)\subseteq \Qisog(X_0)$ be the $H$-genus
    containing $(X, \varphi)$, and $\Lambda_H(X, \varphi)$ be the set
     of 
    $H$-isomorphism classes within $\scrG_H(X, \varphi)$. Let
    $U_H(X, \varphi)\coloneqq \Stab_{H(\whQ)}(X, \varphi)$ be  the stabilizer
of $(X, \varphi)$ in $H(\whQ)$. Then there is a
    bijection 
\[\Lambda_H(X, \varphi)\simeq H(\Q)\backslash H(\whQ)/U_H(X, \varphi), \]
sending the $H$-isomorphic class $[X, \varphi]$ to the identity
class on the right hand side.
  \end{prop}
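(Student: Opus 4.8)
The plan is to construct the bijection directly from the $H(\whQ)$-action on $\Qisog(X_0)$ described before the statement, by restricting it to the $H$-genus $\scrG_H(X,\varphi)$ and using that this action is transitive on the genus (by the very definition of $H$-genus). First I would fix the base point $(X,\varphi)$ and consider the orbit map $H(\whQ)\to \scrG_H(X,\varphi)$, $\alpha\mapsto \alpha(X,\varphi)$. By definition of the $H$-genus, this map is surjective onto $\scrG_H(X,\varphi)$. Its fibers are exactly the left cosets of $U_H(X,\varphi)=\Stab_{H(\whQ)}(X,\varphi)$, since $\alpha(X,\varphi)=\beta(X,\varphi)$ iff $\beta^{-1}\alpha\in U_H(X,\varphi)$; note $U_H(X,\varphi)$ is indeed a subgroup because the action is a genuine group action. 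Hence we get a bijection $H(\whQ)/U_H(X,\varphi)\isoto \scrG_H(X,\varphi)$.

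Next I would pass to $H$-isomorphism classes. Two members $(X_1,\varphi_1)=\alpha_1(X,\varphi)$ and $(X_2,\varphi_2)=\alpha_2(X,\varphi)$ of the genus are $H$-isomorphic iff there is $\gamma\in H(\Q)$ with $(X_2,\varphi_2)=(X_1,\gamma\varphi_1)$; I would check that the operation $(X_1,\varphi_1)\mapsto (X_1,\gamma\varphi_1)$ for $\gamma\in H(\Q)$ agrees with the restriction of the adelic action along the diagonal embedding $H(\Q)\hookrightarrow H(\whQ)$, using the characterization \eqref{eq:16}: indeed $(X_1(\ell),(\gamma\varphi_1)_\ell)=(X_1(\ell),\gamma_\ell (\varphi_1)_\ell)$ for every $\ell$, which is precisely the defining property of $\gamma\cdot(X_1,\varphi_1)$ in the adelic action. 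Therefore the $H$-isomorphism relation on $\scrG_H(X,\varphi)$ corresponds, under the bijection $H(\whQ)/U_H(X,\varphi)\isoto\scrG_H(X,\varphi)$, to left multiplication by $H(\Q)$ on $H(\whQ)/U_H(X,\varphi)$. Quotienting by this relation yields the desired bijection
\[
\Lambda_H(X,\varphi)\;\isoto\; H(\Q)\backslash H(\whQ)/U_H(X,\varphi),
\]
and by construction the class of the base point $[X,\varphi]$ maps to the double coset of the identity.

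The main point requiring care — and the step I expect to be the mild obstacle — is the compatibility check that the "abstract" $H$-isomorphism relation of Definition~\ref{defn:adelic-action-on-Qisog} (given by $\alpha\in H(\Q)$ acting through $\varphi_1\mapsto\alpha\varphi_1$) coincides with the restriction of the adelic $G(\whQ)$-action to the subgroup $H(\Q)\subset H(\whQ)$. This is where one must invoke Tate's theorem and the functoriality of \eqref{eq:16}–\eqref{eq:29}: a rational isogeny $\alpha\in H(\Q)\subseteq\End^0(X_0)^\times$ acts on each $\ell$-divisible group $X_1(\ell)$ as $\alpha_\ell$, so the two descriptions of the new object literally agree prime by prime, hence agree in $\Qisog(X_0)$ by the uniqueness clause in the adelic action. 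Once this identification is in place, the rest is the standard orbit–stabilizer computation together with the observation that $U_H(X,\varphi)=H(\whQ)\cap(\End(X)\otimes\whZ)^\times$ is open compact (already recorded before the statement), so all double cosets are well-defined. Everything else is formal bookkeeping with group actions.
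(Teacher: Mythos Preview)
Your proposal is correct and is exactly the orbit--stabilizer argument the paper has in mind; the paper's own proof is simply the one-line remark ``The proposition follows directly from definition.'' Your expansion of the compatibility check between $H$-isomorphism and the diagonal $H(\Q)$-action via \eqref{eq:16} is the only point that needs a moment's thought, and you handle it correctly.
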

The proposition follows directly from definition. From \cite[Theorem~8.1]{Platonov-Rapinchuk}, $\Lambda_H(X, \varphi)$ is a
finite set. Proposition~\ref{prop:H-genus-isom} turns out to be quite
versatile. By varying $H$, it can be used to count abelian varieties
with various additional structures. We give two examples below, one
for counting unpolarized abelian varieties, and another for counting
principally polarized ones. See \S\ref{sec:PP} for counting $(P,
P_+)$-polarized abelian surfaces with real
multiplication.


\begin{sect}
  First, let us look at the case $H=G$. Two members
  $(X_i,\varphi_i)_{i=1,2}\in \Qisog(X_0)$ are in the same
  $G$-genus if and only if $X_1(\ell)$ is isomorphic to $X_2(\ell)$
  for every prime $\ell$. This matches with the classical notion of
  \emph{genus} for unpolarized abelian varieties in an isogeny class,
  cf.~\cite[Definition~5.1]{xue-yu:counting-av}.  Similarly,
  $(X_1,\varphi_1)$ and $(X_2,\varphi_2)$ are $G$-isomorphic if and
  only if $X_1$ and $X_2$ are $k$-isomorphic abelian varieties. Therefore,
  Proposition~\ref{prop:H-genus-isom} recovers
  \cite[Proposition~5.4]{xue-yu:counting-av} in the case $H=G$.
\end{sect}

\begin{sect}

Next,  we look at polarized abelian varieties. Let $X^t$ be the dual abelian
variety of $X$.  A $\Q$-isogeny $\lambda: X\to
X^t$ is said to be a \emph{$\Q$-polarization} if $N\lambda$ is a
polarization for some $N\in \bbN$. For each prime $\ell$, the
$\Q$-polarization $\lambda$ induces
a $\Q_\ell$-quasipolarization of $X(\ell)$ (see \cite[\S1]{Oort-ann-2000}
and \cite[\S5.9]{li-oort}). 
 An isomorphism (resp.~$\Q$-isogeny) from a 
$\Q$-polarized abelian variety $(X_1, \lambda_1)$ to another $(X_2,
\lambda_2)$ is an isomorphism (resp.~$\Q$-isogeny) $\varphi: X_1\to X_2$ such that
\begin{equation}
  \label{eq:27}
\lambda_1=\varphi^*\lambda_2\coloneqq \varphi^t\circ\lambda_2\circ\varphi.   
\end{equation}
Fix a $\Q$-polarized abelian variety $(X_0, \lambda_0)$. Two $\Q$-isogenies $\varphi_i: (X_i, \lambda_i)\to (X_0, \lambda_0)$
for $i=1,2$ are said to be \emph{equivalent} if there exists an isomorphism
$\theta: (X_1, \lambda_1)\to (X_2, \lambda_2)$ such that
$\varphi_1=\varphi_2\circ \theta$. 
We define
$\Qisog(X_0, \lambda_0)$ to be the set of equivalence classes of all 
$\Q$-isogenies $(X, \lambda, \varphi)$ to $(X_0, \lambda_0)$. The
forgetful map $(X, \lambda, \varphi)\mapsto (X, \varphi)$ induces a
bijection:
\begin{equation}
  \label{eq:7}
\calF(\lambda_0):  \Qisog(X_0, \lambda_0)\to \Qisog(X_0), 
\end{equation}
whose inverse is given by $(X, \varphi)\mapsto (X, \varphi^*\lambda_0,
\varphi)$.

Let $G^1\subseteq G$ be the algebraic subgroup over $\Q$
that represents the functor 
\begin{equation}\label{eq:20}
R\mapsto G^1(R)\coloneqq \{g\in (\End^0(X_0)\otimes_\Q R)^\times\mid g^t\circ
\lambda_0\circ g=\lambda_0\}  
\end{equation}
for every commutative $\Q$-algebra $R$. In light of the bijection (\ref{eq:7}), two members
$(X_i, \varphi_i)_{i=1,2}\in \Qisog(X_0)$ are in the same $G^1$-genus if and
only if $(X_1(\ell), \lambda_{1,\ell})$ is isomorphic to
$(X_2(\ell), \lambda_{2,\ell})$ for every prime $\ell$. Once again,
this recovers the classical notion of ``being in the same genus'' for
$\Q$-polarized abelian varieties in an isogeny class. Similarly, $(X_1,
\varphi_1)$ and $(X_2,
\varphi_2)$ are $G^1$-isomorphic if and only if $(X_1, \lambda_1)$ and
$(X_2, \lambda_2)$ are isomorphic $\Q$-polarized abelian varieties over $k$. Therefore, when $H=G^1$,
Proposition~\ref{prop:H-genus-isom} is a special case of
\cite[Theorem~5.10]{xue-yu:counting-av}. 
\end{sect}

In practice, we are more interested in abelian varieties with
\emph{integral} polarizations  than
  $\Q$-polarizations. 

\begin{lem}[{\cite[Remark~5.7]{xue-yu:counting-av}}]\label{lem:genus-polarization}
  Let $\scrG(X, \lambda, \varphi)\subseteq \Qisog(X_0, \lambda_0)$ be the 
  genus containing $(X, \lambda, \varphi)$. If $\lambda$
  is an integral polarization on $X$, then $\lambda'$ is  integral  for every member $(X', \lambda', \varphi')\in
  \scrG(X, \lambda, \varphi)$. If
  moreover $\lambda$ is principal, then so is
  $\lambda'$. 
\end{lem}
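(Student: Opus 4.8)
The plan is to reduce the statement to a local (adelic) computation and use the fact, established in the preceding discussion, that two members of a common genus differ by the $G(\whQ)$-action. Concretely, suppose $(X',\lambda',\varphi')$ lies in $\scrG(X,\lambda,\varphi)$. Under the forgetful bijection $F(\lambda_0)$ of \eqref{eq:7}, the pair $(X,\varphi)$ and $(X',\varphi')$ lie in the same $G^1$-genus, so there is $\alpha=(\alpha_\ell)\in G^1(\whQ)$ with $(X',\varphi')=\alpha(X,\varphi)$; by \eqref{eq:16} this means $(X'(\ell),\varphi'_\ell)=(X(\ell),\alpha_\ell\varphi_\ell)$ for every prime $\ell$, and moreover $\lambda'_\ell$ corresponds to $\lambda_\ell$ under $\alpha_\ell$ because $\alpha_\ell\in G^1(\Q_\ell)$ preserves the $\Q_\ell$-polarization form $\lambda_{0,\ell}$ pulled back along $\varphi$. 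Thus the statement is insensitive to the global abelian variety and only concerns the $\ell$-divisible groups with their induced $\Q_\ell$-quasipolarizations.

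First I would recall that ``$\lambda$ is an integral polarization'' is equivalent to the assertion that, for every prime $\ell$, the induced $\Q_\ell$-quasipolarization $\lambda_\ell$ on $X(\ell)$ is actually an \emph{integral} quasipolarization, i.e.\ it carries the Tate module (or Dieudonn\'e module at $\ell=p$) into its dual rather than merely into a $\Q_\ell$-multiple of it; ``principal'' means this inclusion is an equality, i.e.\ $\lambda_\ell$ is an isomorphism onto the dual. This is the standard $\ell$-adic/crystalline criterion (see \cite[\S1]{Oort-ann-2000}, \cite[\S5.9]{li-oort}). So the whole problem becomes: if $\lambda_\ell$ is integral (resp.\ an isomorphism) on $X(\ell)$ and $\alpha_\ell\in G^1(\Q_\ell)$, then the quasipolarization on $X(\ell)$ transported via $\alpha_\ell$ — which is exactly $\lambda_\ell$ again viewed on the isomorphic object $X'(\ell)$ — is still integral (resp.\ an isomorphism). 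But $\alpha_\ell$ lies in $G^1$, hence by definition \eqref{eq:20} fixes $\lambda_0$, so under the identification $X'(\ell)\cong X(\ell)$ coming from $\alpha_\ell\varphi_\ell$, the quasipolarization $\lambda'_\ell$ literally becomes $\lambda_\ell$. Integrality and principality are properties of the pair (lattice, pairing) up to isomorphism, so they are preserved.

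Putting it together: at each prime $\ell$, the triple $(X'(\ell),\lambda'_\ell)$ is isomorphic as a quasipolarized $\ell$-divisible group to $(X(\ell),\lambda_\ell)$ via the isomorphism induced by $\alpha_\ell\varphi_\ell(\varphi'_\ell)^{-1}$; since $\lambda_\ell$ is integral for all $\ell$ (by hypothesis on $\lambda$) and equals an isomorphism for all $\ell$ when $\lambda$ is principal, the same holds for $\lambda'_\ell$ at every $\ell$; therefore $\lambda'$ is an integral polarization on $X'$, and principal when $\lambda$ is. I expect no serious obstacle here — the statement is essentially bookkeeping once the $\ell$-adic reformulation of integrality/principality is invoked. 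The one point requiring a little care, and the closest thing to a ``hard part,'' is making sure that the $\Q_\ell$-quasipolarization attached to $\varphi^*\lambda_0$ genuinely transports under the $G^1(\whQ)$-action the way \eqref{eq:16}–\eqref{eq:29} suggests, i.e.\ that the element $\alpha$ realizing the genus equivalence can be taken in $G^1(\whQ)$ rather than merely $G(\whQ)$; but this is exactly the content of the bijection \eqref{eq:7} together with the definition of the $G^1$-genus spelled out just before Lemma~\ref{lem:genus-polarization}, so it is already available. Alternatively, one can cite \cite[Remark~5.7]{xue-yu:counting-av} directly, of which this lemma is a restatement, and simply observe that the proof there applies verbatim in the present generality.
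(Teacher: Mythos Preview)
Your argument is correct and is essentially the standard one: genus membership means the quasipolarized $\ell$-divisible groups $(X(\ell),\lambda_\ell)$ and $(X'(\ell),\lambda'_\ell)$ are isomorphic for every $\ell$ (this is exactly the content of \S\ref{sect:genera-G1-Tate}), and integrality/principality of $\lambda$ is detected locally as $T_\ell(X)\subseteq T_\ell(X)^\vee$ (resp.\ equality) in $(V_\ell,\psi_\ell)$, which is an isometry invariant. The paper gives no proof of its own for this lemma---it simply records the statement with a citation to \cite[Remark~5.7]{xue-yu:counting-av}---so there is nothing to compare against; your write-up supplies precisely the argument the surrounding discussion in \S\ref{sect:genera-G1-Tate} makes available.
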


\begin{sect}\label{sect:genera-G1-Tate}
  For calculation purpose, it is more convenient
  to describe genera of $\Q$-polarized abelian varieties in terms of Tate
  modules and \dieu modules rather than $\ell$-divisible
  groups.  Assume that $k$ is the finite field $\F_q$, and
   $\pi$ is an arbitrary Weil $q$-number. Let
   \begin{equation}
 F=\Q(\pi),\qquad O_F=\text{the ring of
  integers of $F$}, \qquad     A=\Z[\pi]\subseteq O_F.
   \end{equation}
  If $F$ is a CM-field, then we write $a\mapsto \bar{a}$
  for the complex conjugation map on $F$; otherwise $\pi=\pm \sqrt{q}$
  so that $F$ is either $\Q$
  or $\Q(\sqrt{p})$, and we put $\bar{a}=a$ for every $a\in F$. Let
  $X_0/\F_q$ be an abelian variety in the
  simple $\F_q$-isogeny class  corresponding to $\pi$, and $\lambda_0:X_0\to
  X_0^t$  be a $\Q$-polarization.   At each prime
  $\ell\neq p$, the Tate space $V_\ell\coloneqq T_\ell(X_0)\otimes\Q_\ell$
  with its $\Gal(\Fqbar/\Fq)$-module structure is simply a free
  $F_\ell$-module of rank $2\dim(X_0)/[F:\Q]$, and $\lambda_0$ induces
  a non-dengerate alternating $\Q_\ell$-bilinear Weil pairing
  \begin{equation}
  \label{eq:22}
\psi_\ell: V_\ell\times V_\ell\to \Q_\ell\quad \text{such
    that}  \quad   \psi_\ell(ax, y)=\psi_\ell(x,
  \bar{a}y)
\end{equation}
for all $a\in F_\ell$ and $x, y\in V_\ell$.  A $\Q$-isogeny $\varphi:
(X, \lambda)\to (X_0, \lambda_0)$ identifies $T_\ell(X)$ with the
$A_\ell$-lattice $\varphi_\ell(T_\ell(X))$ in  $(V_\ell,
\psi_\ell)$. If $F$ is totally real, then there exists a unique 
non-degenerate alternating $F_\ell$-bilinear form $\psi_{\ell, F}:
V_\ell\times V_\ell\to F_\ell$ such that $\psi_\ell=\Tr_{F/\Q}\circ
\psi_{\ell, F}$. If $F$ is a CM-field, then after fixing an element
$\gamma\in F^\times$ with $\bar{\gamma}=-\gamma$, there exists a
unique  non-degenerate hermitian form $\psi_{\ell, F}: V_\ell\times
V_\ell\to F_\ell$ such that  $\psi_\ell(x,
y)=\Tr_{F/\Q}(\gamma\psi_{\ell, F}(x,y))$ for all $x, y\in V_\ell$. 

Similarly, let $M(X_0)$ be the covariant Dieudonn\'e module of $X_0$, which is a free module of rank $2\dim(X_0)$ over the ring of
Witt vectors $W(\F_q)$. For simplicity,  put 
\begin{equation}\label{eq:witt}
 \Z_q\coloneqq W(\F_q), \qquad 
  \Q_q\coloneqq \Z_q\otimes_{\Z_p}\Q_p,   
\end{equation}
 and let $\sigma\in \Gal(\Q_q/\Q_p)$
  be the Frobenius automorphism as usual.  The Frobenius operator $\sfF$
  (resp.~Verschiebung operator $\sfV$) acts $(\Z_q,\sigma)$-linearly
  (resp.~$(\Z_q, \sigma^{-1})$-linearly) on
  $M(X_0)$, and $\pi$ acts $\Z_q$-linearly on $M(X_0)$ via $\sfF^n$. 
  From \cite[\S
5.9]{li-oort}, the $\Q$-polarization $\lambda_0$  induces 
a non-degenerate alternating $\Q_q$-bilinear form $\psi_p$  on the  F-isocrystal
$V_p\coloneqq M(X_0)\otimes_{\Z_p} \Q_p$ satisfying 
\begin{equation}
  \label{eq:62}
  \psi_p(\sfF x, y)=\psi_p(x, \sfV
  y)^\sigma,\qquad 
  \forall\ x, y\in V_p.
\end{equation}
 A $\Q$-isogeny
$\varphi: (X, \lambda)\to (X_0, \lambda_0)$  again identifies $M(X)$
with the \dieu lattice $\varphi_p(M(X))$ in $(V_p,
\psi_p)$.




From the fuctorial equivalence  between
$\ell$-divisible groups and \dieu modules (when $\ell=p$) or Tate
modules (when $\ell\neq p$), we see that two members
$(X_i,\lambda_i,\varphi_i)_{i=1,2}\in \Qisog(X_0,\lambda_0)$ belong to
the same genus if and only if both of the following conditions are satisfied:
\begin{enumerate}
\item at each prime $\ell\neq p$, the two $A_\ell$-lattices
  $T_\ell(X_i)$ are isometric in $(V_\ell, \psi_\ell)$; 
\item at the prime $p$, the two \dieu modules $M(X_i)$  are isometric
  in $(V_p, \psi_p)$.  
\end{enumerate}

\end{sect}

\begin{sect}
Let  $\Qisog^\pp(X_0, \lambda_0)$ be
  the subset of $\Qisog(X_0,
  \lambda_0)$ consisting of the principally polarized members. 
A member $(X, \lambda, \varphi)\in \Qisog(X_0, \lambda_0)$ belongs to
$\Qisog^\pp(X_0, \lambda_0)$ if and only if both of 
the following conditions are satisfied:
\begin{enumerate}
\item the \dieu module $M(X)$ is self-dual in $(V_p, \psi_p)$;
\item the Tate module   $T_\ell(X)$ is self-dual in $(V_\ell,
  \psi_\ell)$ for each
 $\ell\neq p$. 
\end{enumerate}
Let $\grM_p $ be the set of isometric classes of self-dual \dieu modules in
$(V_p, \psi_p)$, and for each prime $\ell\neq p$, let $\grM_\ell$ be the
set of isomemtric classes of self-dual $A_\ell$-lattices in $(V_\ell,
\psi_\ell)$. Put $\grM\coloneqq \grM_p\times \prod_{\ell\neq
    p}\grM_\ell$. 
There is a canonical  map 
\begin{equation}
  \label{eq:71}
\Phi:  \Qisog^\pp(X_0, \lambda_0)\to \grM, \qquad  (X, \lambda, \varphi)\mapsto ([M(X)],
  ([T_\ell(X)])_{\ell\neq p}),  
\end{equation}
whose fibers are precisely the genera of principally polarized members
of $\Qisog(X_0, \lambda_0)$. Let $\grd_A$ be the discriminant of $A$ over
$\Z$, and $S_\pi$ be the following finite set of primes
\begin{equation}
  \label{eq:72}
 S_\pi\coloneqq \{\ell\mid \ell\text{ divides } \grd_A\}\cup \{p\}.  
\end{equation}
For a prime $\ell\neq p$, we have $\ell\not\in S_\pi$ if and only if
$\ell$ is unramified in $F$ and $A_\ell$ is the maximal order in
$F_\ell$. We claim that $\abs{\grM_\ell}\leq 1$ for every prime
$\ell\not\in S_\pi$. If $\pi=\pm \sqrt{q}$, then $\abs{\grM_\ell}=1$
according to Lemma~\ref{LC.2}; otherwise  $F=\Q(\pi)$ is a CM-field, and it follows
from \cite[\S7]{Jacobowitz-HermForm} that $\abs{\grM_\ell}\leq 1$.  Therefore, if 
$\grM\neq \emptyset$, then it is bijective to the finite product
$\prod_{\ell\in S_\pi}\grM_\ell$.

\end{sect}
\begin{lem}
The map $\Phi$ is surjective if $\grM\neq \emptyset$. 
\end{lem}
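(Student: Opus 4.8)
The plan is to show that any element $(\mathfrak{m}_p, (\mathfrak{m}_\ell)_{\ell\neq p})\in\grM$ is hit by $\Phi$, by constructing a principally polarized $\Q$-isogeny to $(X_0,\lambda_0)$ realizing all the prescribed local self-dual lattices simultaneously. First I would fix representatives: choose a self-dual \dieu lattice $M'\subseteq V_p$ in the isometry class $\mathfrak{m}_p$, and for each $\ell\neq p$ a self-dual $A_\ell$-lattice $T'_\ell\subseteq V_\ell$ in the class $\mathfrak{m}_\ell$. Since $(X_0,\lambda_0)$ itself provides the standard lattices $M(X_0)\subseteq V_p$ and $T_\ell(X_0)\subseteq V_\ell$, which agree with $M'$ and $T'_\ell$ for all but finitely many $\ell$ (indeed $T'_\ell=T_\ell(X_0)$ whenever $\ell\notin S_\pi$ by the uniqueness $\abs{\grM_\ell}\le 1$ discussed just above, and also at the finitely many remaining $\ell$ we may rescale so that containment holds), the data $(M',(T'_\ell)_\ell)$ differs from $(M(X_0),(T_\ell(X_0))_\ell)$ only at finitely many places and defines a genuine lattice chain.

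Next I would invoke the standard dictionary between lattices in $(V_p,\psi_p)$ stable under $\sfF,\sfV$ (resp.\ $A_\ell$-lattices in $(V_\ell,\psi_\ell)$ stable under $\Gal$) and $p$-divisible groups (resp.\ $\ell$-divisible groups) isogenous to those of $X_0$, as recalled in \S\ref{sect:genera-G1-Tate}. Concretely, there is an $F$-linear $\Q$-isogeny $\varphi\colon X\to X_0$, unique up to isomorphism, such that $\varphi_p(M(X))=M'$ and $\varphi_\ell(T_\ell(X))=T'_\ell$ for every $\ell$; this is exactly the content of Tate's theorem together with the Dieudonné equivalence, applied place by place and glued using that the two lattice systems coincide outside a finite set. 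Then I would put $\lambda:=\varphi^*\lambda_0$, which is a $\Q$-polarization on $X$ by construction, so that $(X,\lambda,\varphi)\in\Qisog(X_0,\lambda_0)$. Because $M'$ is self-dual in $(V_p,\psi_p)$ and each $T'_\ell$ is self-dual in $(V_\ell,\psi_\ell)$, the criterion recalled before the lemma shows $(X,\lambda,\varphi)\in\Qisog^\pp(X_0,\lambda_0)$, and by definition $\Phi(X,\lambda,\varphi)=(\mathfrak{m}_p,(\mathfrak{m}_\ell)_\ell)$. Hence $\Phi$ is surjective whenever $\grM\neq\emptyset$.

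The main obstacle is the gluing step: one must check that the locally defined isogenies can be assembled into a single global $\Q$-isogeny $\varphi\colon X\to X_0$. This is where finiteness of $S_\pi$ is essential — at all primes outside $S_\pi$ the prescribed self-dual lattice is forced to equal the one coming from $X_0$, so no adjustment is needed there, and at the finitely many bad primes one produces the modification by a genuine isogeny (equivalently, an element of $G(\whQ)$ acting on $\Qisog(X_0,\lambda_0)$ as in \eqref{eq:16}). In other words, the surjectivity of $\Phi$ amounts to the statement that the $G(\whQ)$-action on $\Qisog(X_0)$ has image meeting every genus of self-dual lattices, which follows once one knows the local lattice data is consistent almost everywhere; the self-duality of $\psi_p$ on $M'$ and of $\psi_\ell$ on $T'_\ell$ then guarantees the resulting member lands in $\Qisog^\pp$. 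I expect the verification that $\lambda=\varphi^*\lambda_0$ is again integral and principal to be immediate from Lemma~\ref{lem:genus-polarization} once the genus is identified.
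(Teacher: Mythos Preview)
Your approach is the same as the paper's: pick representatives of the target local isometry classes that agree with $(M(X_0),(T_\ell(X_0))_\ell)$ at almost all places, and then use the dictionary of \S\ref{sect:genera-G1-Tate} (equivalently, the $G(\whQ)$-action of \eqref{eq:16}) to produce $(X,\lambda,\varphi)\in\Qisog^\pp(X_0,\lambda_0)$ with the prescribed local lattices.

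There is one imprecision worth fixing. You assert that for $\ell\notin S_\pi$ one may take $T'_\ell=T_\ell(X_0)$ because $\abs{\grM_\ell}\le 1$. But $\lambda_0$ is only a $\Q$-polarization, so $T_\ell(X_0)$ need not be self-dual and hence need not represent the unique class in $\grM_\ell$. The paper handles this by introducing a second finite set $S_0$ of primes at which $T_\ell(X_0)$ fails to be self-dual, and then modifying at the primes of $S=(S_0\cup S_\pi)\smallsetminus\{p\}$ together with $p$. Once you enlarge your bad set accordingly, your gluing argument goes through verbatim. Also, the appeal to Lemma~\ref{lem:genus-polarization} at the end is unnecessary: principality of $\lambda=\varphi^*\lambda_0$ follows directly from the self-duality of the chosen lattices via the criterion stated just before the definition of $\Phi$, not from moving within a genus.
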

\begin{proof}
  For all but finitely many $\ell\neq p$, the $A_\ell$-lattice
  $T_\ell(X_0)$ is self-dual in $(V_\ell, \psi_\ell)$.  We collect the
  exceptional $\ell$'s into a finite set $S_0$, which includes $p$ by
  default. Suppose that $\grM\neq \emptyset$.  For any member
  $\grm\in \grM$, let $(M, (T_\ell)_{\ell\neq p})$ be a representative of
  $\grm$, and put $S=(S_0\cup S_\pi)\smallsetminus \{p\}$. The finite
  product of local lattices
  $(M, (T_\ell)_{\ell\in S})\subset V_p\times \prod_{\ell\in S}V_\ell$
  determines a unique member
  $(X, \lambda, \varphi)\in \Qisog^\pp(X_0, \lambda_0)$, which is
  mapped to $\grm\in \grM$ by $\Phi$.
\end{proof}

\begin{cor}
The genera within $\Qisog^\pp(X_0,
\lambda_0)$ is bijective to the set $\grM$. 
\end{cor}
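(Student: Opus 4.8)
The plan is to combine the two facts just established: the lemma that $\Phi$ is surjective when $\grM\neq\emptyset$, and the characterization (stated in the paragraph preceding the lemma) that the fibers of $\Phi$ are exactly the genera of principally polarized members of $\Qisog(X_0,\lambda_0)$. Together these immediately yield a bijection between the set of genera and $\grM$ in the nonempty case, so the only real content is to deal with the degenerate case and to phrase the bijection correctly.

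First I would dispose of the case $\grM=\emptyset$. In that case $\Qisog^\pp(X_0,\lambda_0)=\emptyset$: any principally polarized member would, via $\Phi$, produce an element of $\grM$, since its Dieudonn\'e module is self-dual in $(V_p,\psi_p)$ and its Tate modules are self-dual in the $(V_\ell,\psi_\ell)$ by the very definition of $\Qisog^\pp$. Hence there are no genera of principally polarized members, and the empty set is (vacuously) in bijection with $\grM=\emptyset$. So from now on assume $\grM\neq\emptyset$.

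Next, recall from \S\ref{sect:genera-G1-Tate} that two members of $\Qisog(X_0,\lambda_0)$ lie in the same genus if and only if their $\ell$-divisible groups with induced (quasi-)polarizations are isomorphic for every $\ell$, equivalently — by the functorial equivalence with Tate modules at $\ell\neq p$ and Dieudonn\'e modules at $p$ — if and only if their associated tuples of isometry classes $([M(X)],([T_\ell(X)])_{\ell\neq p})$ agree. Restricting to principally polarized members, this says precisely that two elements of $\Qisog^\pp(X_0,\lambda_0)$ lie in the same genus if and only if they have the same image under $\Phi$; i.e. the fibers of $\Phi$ are exactly the genera. Combined with the surjectivity of $\Phi$ from the preceding lemma, $\Phi$ induces a bijection from the set of genera within $\Qisog^\pp(X_0,\lambda_0)$ onto $\grM$, as claimed.

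The only point requiring a little care — and the one I would flag as the main (modest) obstacle — is that the assertion "the fibers of $\Phi$ are precisely the genera of principally polarized members" was only asserted, not proved, in the text; so the honest write-up should recall why it holds. This amounts to observing that a genus in $\Qisog(X_0,\lambda_0)$ that contains one principally polarized member consists entirely of principally polarized members (this is Lemma~\ref{lem:genus-polarization}), so "genus of $\Qisog^\pp(X_0,\lambda_0)$" is unambiguous, and then invoking the local characterization of genera from \S\ref{sect:genera-G1-Tate} together with the fact that self-duality of a lattice (or Dieudonn\'e module) is an invariant of its isometry class. With that observation in hand, the corollary follows formally.
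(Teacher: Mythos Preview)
Your proposal is correct and follows exactly the paper's implicit argument: the paper gives no separate proof of this corollary, treating it as an immediate consequence of the preceding lemma (surjectivity of $\Phi$ when $\grM\neq\emptyset$) together with the earlier assertion that the fibers of $\Phi$ are precisely the genera of principally polarized members. Your write-up simply makes explicit the degenerate case $\grM=\emptyset$ and the justification (via Lemma~\ref{lem:genus-polarization} and \S\ref{sect:genera-G1-Tate}) for that fiber description, which the paper leaves to the reader.
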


\begin{sect}\label{sect:3-steps}
Thus in principle,   to compute $\abs{\ppav(\pi)}$ for a Weil 
$q$-number $\pi$, we take the following three steps: 


\begin{enumerate}[label=(Step \arabic*)]
\item Determine if $\ppav(\pi)=\emptyset$ or not. If it is nonempty, then separate $\ppav(\pi)$ into $\Q$-isogeny classes. Each such
  isogeny class determines an algebraic $\Q$-group $G^1$ and a collection of  symplectic spaces 
   $(V_\ell, \psi_\ell)$ as in \eqref{eq:22} and \eqref{eq:62} indexed\footnote{For every prime $\ell\not\in S_\pi$, the space 
 $(V_\ell, \psi_\ell)$ is uniquely determined
   up to isometry by $\pi$ itself since it  admits a self-dual
  $A_\ell$-lattice.} by  $\ell\in S_\pi$.


\item For each $\Q$-isogeny class in $\ppav(\pi)$, separate it further
  into genera. This amounts to classifying the isometric classes of
  the following objects
  \begin{enumerate}
  \item     self-dual \dieu modules in $(V_p, \psi_p)$;
  \item  self-dual $A_\ell$-modules in $(V_\ell, \psi_\ell)$ for every $\ell\in
    (S_\pi\smallsetminus\{p\})$. 
     \end{enumerate}


\item The cardinality of the genus in $\ppav(\pi)$ represented by a member
  $[X, \lambda]$ is equal to the class number
  \begin{equation}
    \label{eq:8}
    \abs{G^1(\Q)\backslash G^1(\whQ)/U_{G^1}(X)}. 
  \end{equation}
Varying $[X, \lambda]$  genus by genus, we obtain $\abs{\ppav(\pi)}$ by summing up all such class
numbers.
\end{enumerate}

However, in this fullest generality, it
is nontrivial to even determine if $\ppav(\pi)=\emptyset$ or not
(cf.~\cite{Howe-Maisner-et-al2008, Howe:1996AG}), let along
separating it into $\Q$-isogeny classes. Nevertheless, neither of
these problems
pose any difficulty when $\pi=\sqrt{q}$ for  an odd power $q$ of
$p$, as we shall see in \S\ref{sect:ppav-nonempty} and
Lemma~\ref{P.2} in the next section.  
\end{sect}



\section{The classification of $\ppav(\sqrt{p})$ into genera}
\label{sec:computing-ppav}
We carry out the first two steps of our strategy as described in
\S\ref{sect:3-steps} for the Weil $p$-number
$\pi=\sqrt{p}$ and classify $\ppav(\sqrt{p})$ into genera. Certain symplectic lattice classification in Step~2 will be
postponed to \S\ref{sec:LC}. 

\subsection{The first step}
\numberwithin{thmcounter}{subsection}
For the first step, there is no need to
restrict to the prime base field case yet. Assume that $q=p^n$ is an
\emph{odd} power of $p$. Every abelian variety $X/\F_q$ in the simple $\F_q$-isogeny class corresponding to the
Weil $q$-number $\pi=\sqrt{q}$ is a supersingular abelian surface. 
From (\ref{eq:23}),  we have
$\End^0(X)\simeq D_{\infty_1, \infty_2}$.
The endomorphism ring $\End(X)$ is a $\Z[\sqrt{q}]$-order
in $D_{\infty_1, \infty_2}$ uniquely determined by $X$ up to an inner
automorphism.  For simplicity, we put
\begin{equation}
  \label{eq:24}
D\coloneqq D_{\infty_1, \infty_2},\quad  F\coloneqq \Q(\sqrt{p})\quad \text{and}\quad   A\coloneqq \Z[\sqrt{q}]. 
\end{equation}

%



\begin{sect}\label{sect:ppav-nonempty}
  Let $E/\F_{q^2}$ be an elliptic curve with Frobenius endomorphism
  $\pi_E=q$, and $\lambda_E: E\to E^t$ be the canonical
  principal polarization of $E$.  Take $X\coloneqq \Res_{\F_{q^2}/\Fq}(E)$, the Weil restriction
  of $E$ with respect to $\F_{q^2}/\F_q$. Then $X$ is a superspecial
  abelian surface with $\pi_X^2=q$, and the Weil restriction
  $\lambda_X\coloneqq \Res_{\F_{q^2}/\Fq}(\lambda_E)$  is a principal polarization on
  $X$. Thus, if we write
  $\ppsp(\sqrt{q})$ for the subset of $\ppav(\sqrt{q})$ consisting of
  the members $[X, \lambda]$ with $X$ being superspecial, then
  $\ppsp(\sqrt{q})\neq \emptyset$. 
\end{sect}

\begin{sect}\label{sect:neron-severi}
  Let $X/\F_q$ be an arbitrary abelian surface with $\pi_X^2=q$, and
  $\lambda: X\to X^t$ be a $\Q$-polarization on $X$. Since
  $D=\End^0(X)$ is totally definite, the Rosati involution on
  $\End^0(X)$ induced by $\lambda$ coincides with the canonical
  involution $\alpha\mapsto \bar{\alpha}\coloneqq \Tr(\alpha)-\alpha$
  according to Albert's classification \cite[Theorem~2,
  \S21]{mumford:av}. The group $G^1$ as defined in (\ref{eq:20}) is
  just the reduced norm one subgroup of $G=\ul D^\times$.  Let
  $\calP(X)$ be the N\'eron-Severi group of $X$,
  $\calP^0(X)\coloneqq \calP(X)\otimes \Q$, and
  $\calP_+^0(X)\subseteq \calP^0(X)$ be the subset consisting of the
  $\Q$-polarizations of $X$.  From \cite[\S21,
  Application~III]{mumford:av}, the map
  $\lambda'\mapsto \lambda^{-1}\lambda'$ with $\lambda'$ ranging in $\calP^0(X)$
  induces an identification
\begin{equation}
  \label{eq:44}
 \varrho_\lambda: \calP^0(X)\simeq F \quad \text{such that}\quad
 \varrho_\lambda(\calP_+^0(X))=F_+^\times. 
\end{equation}
Here $F_+^\times$ denotes the subset of totally positive
elements of $F^\times$.  
\end{sect}

 \begin{lem}\label{P.2}
   For any two $\Q$-polarized  abelian surfaces $(X,\lambda)$ and
  $(X',\lambda')$ over $\F_q$ with $\pi_X^2=\pi_{X'}^2=q$, there exists a $\Q$-isogeny
  $\varphi:X\to X'$ such that $\varphi^*\lambda'=\lambda$. 
\end{lem}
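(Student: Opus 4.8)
The plan is to reduce the statement to a local-global assertion about symplectic similitudes, using the fact that $D = D_{\infty_1,\infty_2}$ splits at every finite place. First I would fix one of the two polarized surfaces, say $(X_0,\lambda_0)$ from \S\ref{sect:ppav-nonempty} (the Weil restriction of an elliptic curve), and view both $(X,\lambda)$ and $(X',\lambda')$ as objects admitting $\Q$-isogenies to $X_0$ after choosing arbitrary $\Q$-isogenies $X \to X_0$, $X' \to X_0$; by the bijections \eqref{eq:6} and \eqref{eq:7} it suffices to show that any two members of $\Qisog(X_0,\lambda_0)$ lie in a single $G$-orbit under the $G(\whQ)$-action, or equivalently, in view of \S\ref{sect:genera-G1-Tate}, that any two tuples of local data $(T_\ell(X), \psi_\ell)_\ell$ and $(M(X), \psi_p)$ versus the primed ones are related by an element of $G^1(\whQ)$ up to the $G(\whQ)$-twist coming from rescaling the polarization. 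Since we only need a $\Q$-isogeny $\varphi$ with $\varphi^*\lambda' = \lambda$ (not an isomorphism, and not preservation of integral structure), I do not need to match lattices — only the $F_\ell$-Hermitian/symplectic spaces $(V_\ell,\psi_\ell)$ themselves.

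The key point is then that the symplectic space $(V_\ell,\psi_\ell)$ attached to $(X,\lambda)$ depends, up to $F_\ell$-linear similitude, only on $\pi$ and not on the choice of $\lambda$. At each finite $\ell$ (including $\ell = p$), $V_\ell$ is a free $F_\ell$-module of rank $2$ carrying a nondegenerate alternating form satisfying the adjunction \eqref{eq:22} (resp.\ \eqref{eq:62} for $\ell = p$). Because $F$ is totally real here, $\bar a = a$, so $\psi_\ell$ is simply an $F_\ell$-bilinear alternating form on a rank-$2$ free module; any two such are $\GL_2(F_\ell)$-similar (indeed isometric after scaling). Thus for each $\ell$ there is $g_\ell \in \GL_{F_\ell}(V_\ell)$ with $g_\ell^* \psi_\ell' = c_\ell \psi_\ell$ for some scalar, and one checks these $g_\ell$ can be chosen to be the identity for almost all $\ell$ (both lattices are self-dual outside the finite set $S_\pi$, by the discussion preceding the surjectivity lemma, so the spaces are already canonically identified there). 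Hence $(g_\ell)_\ell$ defines an element of $G(\whQ) = \ul D^\times(\whQ)$ carrying the genus of $(X',\lambda')$ into that of $(X,\lambda)$, and combining with the $G(\Q)$-action (which realizes actual $\Q$-isogenies) produces the desired $\varphi$.

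To make the last combination precise I would invoke the setup of \S\ref{sect:genera-G1-Tate} and Proposition~\ref{prop:H-genus-isom}: once $(X,\lambda)$ and $(X',\lambda')$ lie in the same $G$-genus inside $\Qisog(X_0,\lambda_0)$ — i.e.\ are related by an element $\alpha \in G(\whQ)$ — the element $\alpha$ itself is exactly the $\whZ$-adic avatar of a $\Q$-isogeny $\varphi\colon X \to X'$, and by construction $\varphi$ intertwines the $\Q$-polarizations up to the scalar $c = (c_\ell)$; absorbing $c \in \whQ^\times$ (and using $F^\times$ dense in $\whF^\times$ modulo the relevant unit group, together with the identification \eqref{eq:44} of $\calP^0(X)$ with $F$) lets me correct $\varphi$ on the nose to get $\varphi^*\lambda' = \lambda$.

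The main obstacle I anticipate is \emph{not} the archimedean place — $D$ is ramified at both infinite places, but $\Q$-polarizations are about finite-adelic data once the real structure is pinned down by positivity, which \eqref{eq:44} handles — but rather bookkeeping at the places in $S_\pi$, especially $\ell = p$: I must check that self-dual Dieudonné lattices exist in $(V_p,\psi_p)$ (so that $\grM_p \ne \emptyset$ and the $p$-adic symplectic space has the expected invariant) and that the $F_\ell$-similitude classification is genuinely insensitive to the order $A = \Z[\sqrt q]$ not being maximal. Both are addressed by the lattice lemmas deferred to \S\ref{sec:LC} (e.g.\ Lemma~\ref{sec:LC.1}), so in the write-up I would cite those; the argument above is otherwise formal.
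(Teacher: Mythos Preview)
Your proposal has a genuine gap: the detour through local Tate data leaves the key global step unjustified, and the opening reduction is circular.

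The reduction first. You choose arbitrary $\Q$-isogenies $X\to X_0$ and $X'\to X_0$ and invoke the bijection \eqref{eq:7}. But that bijection sends $(X,\varphi)\in\Qisog(X_0)$ to $(X,\varphi^*\lambda_0,\varphi)$, carrying the \emph{induced} polarization $\varphi^*\lambda_0$, not your given $\lambda$. To place $(X,\lambda)$ itself in $\Qisog(X_0,\lambda_0)$ you must already exhibit $\varphi$ with $\varphi^*\lambda_0=\lambda$, which is the lemma. Conversely, once two objects genuinely sit in $\Qisog(X_0,\lambda_0)$, composing their structure maps through $(X_0,\lambda_0)$ already gives a polarization-preserving $\Q$-isogeny between them, so nothing about $G(\whQ)$-orbits remains to be shown. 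The subsequent local argument establishes that the symplectic $F_\ell$-spaces match at every $\ell$, which is true but only produces an element of $G(\whQ)=\whD^\times$; your assertion that ``the element $\alpha$ itself is exactly the $\whZ$-adic avatar of a $\Q$-isogeny'' is false in general---the gap between $G(\whQ)$-equivalence and $G(\Q)$-equivalence is exactly what class numbers measure---and the density claim ``$F^\times$ dense in $\whF^\times$ modulo the relevant unit group'' does not bridge it.

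The paper's proof is a two-line global argument with no local input. Take any $\Q$-isogeny $\phi:X\to X'$; via \eqref{eq:44} the ratio $\lambda^{-1}(\phi^*\lambda')$ lies in $F_+^\times$. By the Hasse--Schilling--Maass norm theorem \cite[Theorem~III.4.1]{vigneras}, every totally positive element of $F$ is a reduced norm from $D^\times$, so $\lambda^{-1}(\phi^*\lambda')=\bar\alpha\alpha=\Nr(\alpha)$ for some $\alpha\in D^\times$, and $\varphi:=\phi\circ\alpha^{-1}$ satisfies $\varphi^*\lambda'=\lambda$. This norm theorem is precisely the global ingredient your ``absorbing $c$'' step needs and never supplies.
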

\begin{proof}
  Take an arbitrary $\Q$-isogeny $\phi: X\to X'$.  By (\ref{eq:44}),
  $\lambda^{-1}(\phi^*\lambda')$ lies in $F_+^\times$. On the other hand, 
there exists $\alpha\in D^\times$
  such that $\bar{\alpha}\alpha=\lambda^{-1}(\phi^*\lambda')$ by 
  \cite[Theorem~III.4.1]{vigneras}. Put $\varphi\coloneqq  
  \phi \circ\alpha^{-1}$. Then $\varphi^*\lambda'=\lambda$ as desired. 
\end{proof}

\begin{rem}
  Thanks to Lemma~\ref{P.2}, $\ppav(\sqrt{q})$ forms a single
  $\Q$-isogeny class. As mentioned in \S\ref{sect:neron-severi}, $G^1$
  is just the reduced norm one subgroup of $G=\ul D^\times$.  Let
  $(X_0, \lambda_0)/\F_q$ be a $\Q$-polarized abelian surface with
  $\pi_{X_0}^2=q$, and $(V_p, \psi_p)$ (resp.~$(V_\ell, \psi_\ell)$)
  be the quasi-polarized $\mathrm{F}$-isocrystal (resp.~Tate space for
  $\ell\neq p$) attached to $(X_0, \lambda_0)$ as described in
  \S\ref{sect:genera-G1-Tate}.  For each $\ell\neq p$, the Tate space
  $V_\ell$ is simply a free $F_\ell$-module of rank $2$, and
  $\psi_\ell=\Tr_{F/\Q}\circ \psi_{\ell, F}$, where $\psi_{\ell, F}$
  is an $F_\ell$-linear
  symplectic  form on $V_\ell$ (hence uniquely
  determined up to isomorphism). At the prime $p$, the
  $\mathrm{F}$-isocrystal $V_p$ is a $\Q_q(\sqrt{p})$-space of
  dimension $2$ equipped with a Frobenius operator $\sfF$ such that
  $\sfF(ax)=a^\sigma \sfF x$ for every $a\in \Q_q$ and $x\in V_p$, and
  $\sfF^n=\pi_{X_0}$. We shall make no use of this explicit description
  of the pairing $\psi_p$ for a general $q$, though it can be easily
  deduced from the prime field case (i.e.~$q=p$) below.  
\end{rem}

\subsection{The second step}
Now we move on to the classification of
$\ppav(\pi)$ into genera, or equivalently,  the classification of self-dual
modules in $(V_\ell, \psi_\ell)$ for $\ell\in S_\pi$. 
  From (\ref{eq:72}) and \eqref{eq:24}, $S_\pi=\{2, p\}$
  since $\grd_A=4q$.   Thus our tasks are
  two-folds: 
  \begin{enumerate}
  \item[(i)] to classify the isometric classes of self-dual
    Dieudonn\'e modules in $(V_p, \psi_p)$; 
    \item[(ii)] to classify the isometric classes of self-dual
      $A_2$-lattices in $(V_2, \psi_2)$. 
  \end{enumerate}
  As mentioned previously, we shall restrict ourselves to the
  classification of the \emph{superspecial} \dieu modules and prove the
  bijections in \eqref{eq:PPSP=PPAV}. Recall that a \dieu module $M\subset V_p$ is
  superspecial if and only if  $\sfF M=\sfV M$.
  
\subsubsection{The superspecial Dieudonn\'e modules}\label{sec:dieu-prime-p}
  



  %

 The structure of \dieu modules is very easy to describe when $q=p$ (i.e.~$\pi=\sqrt{p}$). In this
case, $\sigma\in \Gal(\Q_q/\Q_p)$ is   
the identity map, and $\sfF$, $\sfV$ and $\pi$ all act as the same
operator on $V_p$. 
Thus $V_p$ is simply an $F_p$-vector space of
dimension $2$, and  a Dieudonn\'e module in $V_p$ is just a full
$A_p$-lattice.  Moreover,  condition (\ref{eq:62}) becomes
identical to  (\ref{eq:22}). 
Therefore, when $\pi=\sqrt{p}$, we treat a \dieu module 
$M(X)$ as if it is a Tate module and put  $T_p(X)\coloneqq M(X)$. Necessarily,
$T_p(X)$ is a free $O_{F_p}$-module of rank $2$ since
$A_p=O_{F_p}$. See also \cite[Theorem~6.2]{waterhouse:thesis}. 
From Lemma~\ref{LC.2}, we immediately obtain the following lemma.

\begin{lem}\label{lem:unique-dieu-p}
  Suppose that $\pi=\sqrt{p}$. There is a unique isometric class of
  self-dual \dieu modules (or equivalently here, $O_{F_p}$-lattices) in $(V_p, \psi_p)$.
\end{lem}

  From a classical
 result of Oort \cite{Oort-MathAnn-1975}, an abelian variety over a perfect field
 is superspecial if and only if its \dieu module is superspecial.
Clearly, every abelian
surface $X/\F_p$ with $\pi_X^2=p$ is superspecial.


 For a general $q=p^n$ with $n=2s+1$, we focus only on the
 \emph{superspecial} \dieu modules in $(V_p, \psi_p)$ for the
 sake of simplicity.  
 Given a superspecial \dieu module $M\subset V_p$,
 we define its \emph{skeleton} as
\begin{equation}
  \label{eq:P.3}
  \bbS(M)\coloneqq \{x\in M\,|\, \sfF x=\sqrt{p}x\}. 
\end{equation}
Define $\bbS(V_p)$ similarly. 
Clearly, $\bbS(M)$ is a $\Zp[\sqrt{p}]$-lattice in $\bbS(V_p)$.

\begin{lem}\label{lem:dieu}
The superspecial \dieu module $M\subset V_p$ is  canonically isomorphic
to $\bbS(M)\otimes_{\Z_p} \Z_q$. 
\end{lem}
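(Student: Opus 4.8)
The plan is to exhibit an explicit inverse to the natural inclusion $\mathbb{S}(M)\otimes_{\mathbb{Z}_p}\mathbb{Z}_q \to M$, or rather to show this inclusion is an isomorphism by a dimension/saturation count. First I would observe that since $M$ is superspecial, the $\mathrm{F}$-isocrystal $V_p=M\otimes_{\mathbb{Z}_p}\mathbb{Q}_p$ is isoclinic of slope $1/2$ and, after the change of variable making $\sfF^2$ act as multiplication by $p$, the operator $\sfV^{-1}\sfF=\sfF^2/p$ is a $\sigma$-linear operator with $(\sfV^{-1}\sfF)^2=\mathrm{id}$ up to the relevant normalization; more to the point, $\sqrt{p}^{-1}\sfF$ is a $\sigma$-linear operator on $V_p$ whose square is the identity, hence $V_p$ has a basis of fixed vectors over the fixed field $\mathbb{Q}_p(\sqrt{p})$. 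This gives the canonical identification $V_p\cong \mathbb{S}(V_p)\otimes_{\mathbb{Q}_p}\mathbb{Q}_q$, where $\mathbb{S}(V_p)=\{x\in V_p:\sfF x=\sqrt{p}\,x\}$ is a $\mathbb{Q}_p(\sqrt{p})$-vector space of dimension $2$; this is a form of Galois descent / the standard structure theory of superspecial (supersingular with $a$-number maximal) crystals, and I would cite or reprove it via Hilbert 90 applied to the $\sigma$-conjugation action.

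Next I would descend this to the level of lattices. The inclusion $\iota\colon \mathbb{S}(M)\otimes_{\mathbb{Z}_p}\mathbb{Z}_q \hookrightarrow M$ is well defined because $\mathbb{S}(M)\subset M$ and $\mathbb{Z}_q\cdot M\subseteq M$; it is injective because it is so after inverting $p$ (by the isocrystal statement). For surjectivity, since both sides are $\mathbb{Z}_q$-lattices of the same rank in $V_p$, it suffices to check the cokernel has no $p$-torsion, i.e. that $\mathbb{S}(M)\otimes\mathbb{Z}_q$ is $\sfF$-stable and saturated in $M$. The key point is that $M$ being superspecial forces $\sfF M + \sfV M = M$ with $\sfF M=\sfV M$ and the quotient $M/\sfF M$ being a $\mathbb{F}_q$-vector space on which everything is already defined over $\mathbb{F}_p$; concretely, the reduction $\bar M := M/pM$ together with the induced (semilinear) $\bar{\sfF}$ is the Dieudonné module of a superspecial $p$-divisible group, so $\{x\in\bar M:\bar{\sfF}x=\sqrt{p}\,x\}$ (with $\sqrt p$ now acting as its reduction) spans $\bar M$ over $\mathbb{F}_q$. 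Lifting a basis of this fixed space and using that $\sfF$ is $\sigma$-linear, one checks the lifted vectors lie in $\mathbb{S}(M)$ and generate $M$ over $\mathbb{Z}_q$ by Nakayama. That forces $\iota$ to be an isomorphism.

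I expect the main obstacle to be the reduction-mod-$p$ step: one must be careful that the operator ``$\sqrt{p}$'' (an element of $A_p=O_{F_p}$ acting on $M$) interacts correctly with $\sfF$ after reduction, since $\sqrt{p}$ is a uniformizer at $p$ in $F$ when $p$ ramifies — so $\sqrt{p}$ acts as $0$ on $\bar M$ and the naive definition of the skeleton of $\bar M$ degenerates. The correct fix is to work with the rescaled crystal (replacing $\sfF$ by $p^{-s}\sfF$ where $n=2s+1$, so that the rescaled Frobenius has slope $1/2$ and $\mathbb{S}$ is defined using $\sqrt p$ as an element of the \emph{isocrystal} endomorphism algebra rather than of $O_{F_p}$), or equivalently to phrase superspeciality as: $M$ admits a $\mathbb{Z}_q$-basis on which $\sfF$ acts by a scalar matrix times $\sigma$. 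Granting that normalization, the descent argument above goes through cleanly, and the ``canonical'' in the statement refers precisely to the fact that $\mathbb{S}(-)$ is functorial in $M$ and the isomorphism is the one induced by the inclusion.
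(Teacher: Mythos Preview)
Your instinct to normalize by $\sqrt{p}$ and invoke Hilbert 90 is exactly right, but the execution has a genuine gap and an unnecessary detour. First, a minor slip: $(\sqrt{p}^{-1}\sfF)^2$ is $\sigma^2$-linear and equals the identity only when $n\mid 2$; since $n=2s+1$ is odd, what you actually have is $(\sqrt{p}^{-1}\sfF)^n=\mathrm{id}$, which is precisely the descent datum needed. More seriously, your Nakayama argument fails as written: reducing the equation $\sfF x=\sqrt{p}\,x$ modulo $p$ gives $\bar\sfF x=0$, so the ``skeleton of $\bar M$'' is just $\ker\bar\sfF=\sfV M/pM$, a $2$-dimensional $\F_q$-space in the $4$-dimensional $\bar M$. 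It does not span, and no amount of lifting will fix this. You correctly sense the problem, but your proposed fix (replacing $\sfF$ by $p^{-s}\sfF$) has the wrong scalar and is left vague.

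The paper's argument bypasses the reduction step entirely by working integrally from the start. Superspeciality gives $p^{-1}\sfF^2 M=M$; iterating and using $\sfF^n=\sqrt{q}=p^s\sqrt{p}$ yields $\sfF M=\sqrt{p}\,M$ directly. Hence $\sfF_0:=\sqrt{p}^{-1}\sfF$ is a $\sigma$-linear \emph{automorphism of $M$ itself}, not merely of $V_p$. The pair $(M,\sfF_0)$ is then classified by $H^1(\F_q/\F_p,\GL_4(\Z_q))$, which vanishes by Hilbert 90 for the \'etale extension $\Z_q/\Z_p$; so $(M,\sfF_0)\simeq(\Z_q^4,\sigma)$, $\bbS(M)=M^{\sfF_0}$ is free of rank $4$ over $\Z_p$, and $\bbS(M)\otimes_{\Z_p}\Z_q\to M$ is an isomorphism. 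The step you are missing is the equality $\sfF M=\sqrt{p}\,M$, which is what makes the normalized Frobenius integral and lets you apply Hilbert 90 at the lattice level rather than fighting with a degenerate mod-$p$ picture.
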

\begin{proof}
  Since $M$ is
  superspecial, $p^{-1}\sfF^2M=M$. As $p^{-s}\sfF^{2s+1}=\sqrt{p}$, we
  have
\[ \sfF M=\sfF(p^{-s}\sfF^{2s}M)=\sqrt{p}M. \]
Put $\sfF_0\coloneqq \sqrt{p}^{-1}\sfF:M\to M$. Then $(M,\sfF_0)$ is a
finite free $\Z_q$-module of rank $4$ with a $\sigma$-linear automorphism. The pairs $(M,\sfF_0)$ are classified by
the first Galois cohomology $H^1(\Fq/\Fp, \GL_4(\Z_q))$, 
which is trivial by  
``Hilbert's Theorem 90'' \cite[Chap.~III, Lemma 4.10, p.~124]{Milne-etale}.
It follows that $(M,\sfF_0)$ is
isomorphic to the standard $\Z_q$-module $(\Z_q^4, \sigma)$, where
$\sigma$ acts coordinate-wisely. Then $\bbS(M)=M^{\sfF_0}$ is of
rank $4$ as well, and the canonical map $\bbS(M)\otimes_{\Z_p} \Z_q\to
M$ is an isomorphism.
\end{proof}

The above lemma allows us to show that every $\Q$-polarized superspecial abelian surface
$(Y, \mu)/\F_q$ with $\pi_Y^2=q$ descends to
$\F_p$. Let $(X_0, \lambda_0)/\F_p$ be an arbitrary $\Q$-polarized
abelian surface with $\pi_{X_0}^2=p$, and put $(Y_0, 
\mu_0)=(X_0, \lambda_0)\otimes_{\F_p} \F_q$. We have
$\End_{\F_q}^0(Y_0)=\End_{\F_p}^0(X_0)=D$. Thus the group $G(\whQ)=\whD^\times$ acts on both $\Qisog(X_0)$ and $\Qisog(Y_0)$. 


\begin{lem}\label{lem:Qisog-descend}
  Let $\Qisog^{\mathrm{sp}}(Y_0)$  be the subset of $\Qisog(Y_0)$
  consisting of all the superspecial members. The base change  $(X,
  \varphi)\mapsto (X, \varphi)\otimes_{\F_p}\F_q$ induces a
  $G(\whQ)$-equivariant bijection $\Qisog(X_0)\to \Qisog^{\mathrm{sp}}(Y_0)$.  
\end{lem}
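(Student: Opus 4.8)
The plan is to construct an explicit inverse to the base change map using the descent result for superspecial Dieudonné modules (Lemma~\ref{lem:dieu}), and to observe that the $G(\whQ)$-action is compatible with the base change because the endomorphism algebras of $X_0$ and $Y_0$ literally coincide. First I would note that the base change functor $-\otimes_{\F_p}\F_q$ does land in $\Qisog^{\mathrm{sp}}(Y_0)$: if $(X,\varphi)\in\Qisog(X_0)$, then $X$ itself is superspecial by Remark~\ref{rem:dieu-prime-p} (every abelian surface $X/\F_p$ with $\pi_X^2=p$ is superspecial), and superspeciality is preserved under base change since being isomorphic over $\overline{\F}_p$ to a product of supersingular elliptic curves is a geometric condition. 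Moreover the base change map is $G(\whQ)$-equivariant because the Tate modules $T_\ell(X\otimes\F_q)=T_\ell(X)$ for $\ell\neq p$ are canonically unchanged (the Galois action just gets restricted along $\Gal(\overline{\F}_p/\F_q)\hookrightarrow\Gal(\overline{\F}_p/\F_p)$), the quasipolarized $F$-isocrystal for $Y_0$ at $p$ is obtained from the $F_p$-module $(V_p,\psi_p)$ for $X_0$ by $-\otimes_{\Z_p}\Z_q$, and the characterization \eqref{eq:16} of the action is phrased purely in terms of these local objects.

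For injectivity and surjectivity I would argue at the level of lattices. Fix $(X_0,\lambda_0)/\F_p$ as in the statement and set $(Y_0,\mu_0)=(X_0,\lambda_0)\otimes\F_q$. By \S\ref{sect:genera-G1-Tate} and the bijection \eqref{eq:7}, an element of $\Qisog(X_0)$ is the same data as a collection of $A_\ell$-lattices $T_\ell\subset V_\ell$ for $\ell\neq p$ together with a Dieudonné lattice at $p$, which for $X_0$ is just a full $O_{F_p}$-lattice $T_p\subset V_p$ by Remark~\ref{rem:dieu-prime-p}; similarly an element of $\Qisog^{\mathrm{sp}}(Y_0)$ is the same data of $A_\ell$-lattices for $\ell\neq p$ together with a \emph{superspecial} Dieudonné sublattice $M\subset V_p\otimes_{\Z_p}\Z_q$. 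The away-from-$p$ components are identical on both sides since the Tate spaces and their $A_\ell$-lattices do not change under base change. At $p$, Lemma~\ref{lem:dieu} together with the skeleton construction $M\mapsto\bbS(M)$ gives mutually inverse bijections $M\leftrightarrow\bbS(M)$ between superspecial Dieudonné sublattices of $V_p\otimes_{\Z_p}\Z_q$ and $\Zp[\sqrt p]$-lattices in $\bbS(V_p\otimes\Z_q)$; identifying $\bbS(V_p\otimes\Z_q)$ with $V_p$ (for $X_0/\F_p$, one has $\sfF=\sqrt p$ on all of $V_p$, so $\bbS(V_p\otimes\Z_q)\cong V_p\otimes_{O_{F_p}} \Zp[\sqrt p]$, and $A_p=O_{F_p}=\Zp[\sqrt p]$ here), this matches Dieudonné lattices for $X_0$ with superspecial ones for $Y_0$ bijectively. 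I would then check that this bijection of local lattices respects the pairings, i.e.\ that $M$ is self-dual (resp.\ has a prescribed isometry type) in $(V_p\otimes\Z_q,\psi_p\otimes 1)$ if and only if $\bbS(M)$ is so in $(V_p,\psi_p)$ — this is immediate from \eqref{eq:62} reducing to \eqref{eq:22} on the skeleton, as noted in Remark~\ref{rem:dieu-prime-p}. Globalizing (using that a $\Q$-isogeny is determined by its local components at primes in $S_\pi\cup S_0$ and is unramified elsewhere, exactly as in the proof of the surjectivity lemma for $\Phi$) yields the desired bijection $\Qisog(X_0)\to\Qisog^{\mathrm{sp}}(Y_0)$.

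The main obstacle I anticipate is the bookkeeping at $p$: one must be careful that the equivalence ``superspecial Dieudonné module over $\Z_q$'' $\leftrightarrow$ ``$O_{F_p}$-lattice over $\Zp$'' from Lemma~\ref{lem:dieu} is not merely a bijection of sets but is compatible with (i) the alternating pairings, so that self-duality and finer isometry invariants transport correctly, and (ii) the adelic $G(\Q_p)$-action, so that genera and $G^1$-genera correspond. Both compatibilities should follow formally once one unwinds that on a superspecial $M$ the operator $\sfF_0=\sqrt p^{-1}\sfF$ is a $\sigma$-semilinear automorphism fixing $\bbS(M)$, but writing this cleanly — and making sure the descent is canonical enough to be $G(\whQ)$-equivariant rather than only equivariant up to the ambiguity in Lemma~\ref{lem:dieu}'s isomorphism — is where the real content lies. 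Everything away from $p$ is routine since base change does nothing to Tate modules.
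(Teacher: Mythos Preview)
Your proposal is correct and follows essentially the same route as the paper: build the inverse by taking the skeleton $\bbS(M)$ at $p$ via Lemma~\ref{lem:dieu} and leaving the Tate modules unchanged away from $p$, then reassemble into a member of $\Qisog(X_0)$. Your discussion of pairing compatibility is superfluous for this lemma (which concerns the unpolarized $\Qisog$ sets); that consideration only enters in the subsequent Corollary~\ref{cor:equiv-cat}.
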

\begin{proof}
From (\ref{eq:16}), the base change map is $G(\whQ)$-equivariant.   To prove the bijectivity, it suffices to
produce an  inverse map. Let $(Y, \phi)$ be an arbitrary
member of $\Qisog^{\mathrm{sp}}(Y)$.  The $\Q$-isogeny 
$\phi$ identifies $M(Y)$ with a superspecial \dieu module $M$ over
$\F_q$ in $V_p(Y_0)=V_p(X_0)\otimes_{\Q_p} \Q_q$. According to 
Lemma~\ref{lem:dieu}, $\bbS( M)$ is a $\Z_p[\sqrt{p}]$-lattice in
$V_p(X_0)$, and $M=\bbS(M)\otimes_{\Z_p}\Z_q$. At each prime $\ell\neq
p$, the $\Q$-isogeny 
$\phi$ identifies $T_\ell(Y)$ with a
$\Z_\ell[\sqrt{p}]$-lattice  $T_\ell$  in $V_\ell(Y_0)=V_\ell(X_0)$. The inclusion $(\bbS(M), (T_\ell)_{\ell\neq p})\subset
V_p(X_0)\times \prod_{\ell\neq p} V_\ell(X_0)$ corresponds to a unique
member $(X, \varphi)\in \Qisog(X_0)$. By our construction, the map
$\Qisog^{\mathrm{sp}}(Y_0)\to \Qisog(X_0)$ sending $(Y,
\phi)$ to $(X, \varphi)$ is the inverse of the base change
map. 
\end{proof}

\begin{cor}\label{cor:equiv-cat}
Let $\PolSp(\sqrt{q})$ be the category of $\Q$-polarized superspecial abelian surfaces $(Y, \mu)/\F_q$ with
$\pi_Y^2=q$, whose morphisms  are polarized 
$\Q$-isogenies. Then the base change functor
$-\!\otimes_{\F_p}\F_q:\PolSp(\sqrt{p})\to \PolSp(\sqrt{q})$ is an equivalence  of
categories.   In particular, it induces a bijection
$\ppav(\sqrt{p})\simeq \ppsp(\sqrt{q})$. 
 \end{cor}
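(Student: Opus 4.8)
The plan is to deduce Corollary~\ref{cor:equiv-cat} almost formally from Lemma~\ref{lem:Qisog-descend}, which already provides the key geometric input: every $\Q$-polarized superspecial abelian surface over $\F_q$ with Frobenius square $q$ descends (after choosing a base point) to $\F_p$. First I would fix, once and for all, a $\Q$-polarized abelian surface $(X_0,\lambda_0)/\F_p$ with $\pi_{X_0}^2 = p$ (one exists by \S\ref{sect:ppav-nonempty}, e.g.\ the Weil restriction of a supersingular elliptic curve, base-changed back to $\F_p$ via the fact that $\pi = \sqrt p$ makes the construction defined over $\F_p$), and set $(Y_0,\mu_0) = (X_0,\lambda_0)\otimes_{\F_p}\F_q$. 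By Lemma~\ref{P.2}, applied over both $\F_p$ and $\F_q$, every object of $\PolSp(\sqrt p)$ is $\Q$-isogenous (as a $\Q$-polarized abelian surface) to $(X_0,\lambda_0)$, and every object of $\PolSp(\sqrt q)$ is $\Q$-isogenous to $(Y_0,\mu_0)$; here one uses that $\PolSp(\sqrt q)$ consists precisely of the superspecial members, which is automatic when $q = p$ by Remark~\ref{rem:dieu-prime-p} and is imposed by definition for general odd $q$. Thus both categories are connected groupoids (every morphism is an isomorphism in the $\Q$-isogeny category), so to check that the base change functor is an equivalence it suffices to check (a) essential surjectivity and (b) full faithfulness on the single object $(X_0,\lambda_0)$, i.e.\ that $-\otimes_{\F_p}\F_q$ induces a bijection on $\Q$-isogeny self-equivalences, equivalently on $\Q$-isogenies from other objects into $(X_0,\lambda_0)$.

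Next I would translate both sides into the language of \S\ref{sec:method-cal}. Via the bijection $F(\lambda_0)$ of \eqref{eq:7} (and its analogue $F(\mu_0)$ over $\F_q$), the set of $\Q$-polarized objects with a $\Q$-isogeny to $(X_0,\lambda_0)$ up to equivalence is identified with $\Qisog(X_0)$, and likewise the $\Q$-polarized superspecial objects with a $\Q$-isogeny to $(Y_0,\mu_0)$ are identified with $\Qisog^{\mathrm{sp}}(Y_0)$; these identifications are compatible with base change because the polarization on the pullback is the pullback of $\mu_0 = \lambda_0\otimes_{\F_p}\F_q$, which corresponds under $\varrho$ to the same totally positive element of $F$ (the N\'eron--Severi identification \eqref{eq:44} is insensitive to the base change $\F_p \hookrightarrow \F_q$ since $\End^0$ is unchanged). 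Under these identifications the base change functor becomes exactly the map $(X,\varphi)\mapsto (X,\varphi)\otimes_{\F_p}\F_q$ of Lemma~\ref{lem:Qisog-descend}, which is a $G(\whQ)$-equivariant bijection $\Qisog(X_0)\isoto \Qisog^{\mathrm{sp}}(Y_0)$. Essential surjectivity of the functor is then precisely surjectivity of this map, and fullness plus faithfulness on morphisms into $(X_0,\lambda_0)$ follow because the functor identifies each $\Hom_{\PolSp(\sqrt p)}\bigl((X,\varphi),(X',\varphi')\bigr)$ with a subset of $D^\times = \End^0(X_0)^\times = \End^0(Y_0)^\times$ cut out by the same $G^1(\Q)$-condition (the condition $\alpha^t\circ\lambda_0\circ\alpha = \lambda_0$ of \eqref{eq:20}, which is preserved and reflected by base change since the group $G^1$ depends only on the isogeny class, by Albert's classification as recorded in \S\ref{sect:neron-severi}). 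Concretely: a $\Q$-isogeny $\theta\colon (X,\lambda)\to (X',\lambda')$ over $\F_p$ with $\theta^*\lambda' = \lambda$ is the same data as an element of $D^\times$ intertwining the two embedded lattice structures of \S\ref{sect:genera-G1-Tate}, and Lemma~\ref{lem:Qisog-descend} shows the lattice data on the $\F_q$-side (skeleton at $p$, Tate modules away from $p$) is literally recovered from the $\F_p$-side, so the intertwiners coincide.

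Finally, having shown $-\otimes_{\F_p}\F_q$ is an equivalence of $\Q$-isogeny categories $\PolSp(\sqrt p)\simeq \PolSp(\sqrt q)$, I would extract the statement about $\ppav$ and $\ppsp$. A member of $\ppav(\sqrt p)$ is an isomorphism class of a principally polarized $(X,\lambda)/\F_p$ with $\pi_X^2 = p$; by Lemma~\ref{lem:genus-polarization} (integrality and principality are genus invariants) together with the map $\Phi$ of \eqref{eq:71}, the property ``$\lambda$ is principal'' depends only on the self-duality of $M(X)$ at $p$ and of $T_\ell(X)$ at each $\ell\ne p$, and these are exactly the conditions preserved under the lattice-level identification of Lemma~\ref{lem:Qisog-descend} (the skeleton $\bbS(M)$ is self-dual in $(\bbS(V_p),\psi_p)$ iff $M = \bbS(M)\otimes_{\Z_p}\Z_q$ is self-dual in $(V_p,\psi_p)$, and the Tate modules are literally the same). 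Hence the equivalence restricts to a bijection between principally polarized objects on both sides. Because an equivalence of categories induces a bijection on isomorphism classes, and isomorphisms in these $\Q$-isogeny categories between objects with integral (indeed principal) polarizations are honest isomorphisms of polarized abelian varieties (an invertible $\Q$-isogeny respecting integral polarizations on both sides is an isomorphism, as its inverse is again a morphism), we obtain $\ppav(\sqrt p)\simeq \ppsp(\sqrt q)$. The main obstacle is none of the individual steps, all of which are formal given the prior lemmas; rather it is the bookkeeping of checking that the polarization data and the integrality/principality conditions are transported correctly through the chain of identifications \eqref{eq:7}, Lemma~\ref{lem:Qisog-descend}, and \eqref{eq:71} — in particular making sure that ``$\Q$-isogeny that is an isomorphism'' coincides with ``isomorphism of polarized abelian varieties'' on the subcategory of principally polarized objects, which is where one must invoke Lemma~\ref{lem:genus-polarization} rather than argue naively.
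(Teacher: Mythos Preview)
Your argument for the equivalence $\PolSp(\sqrt p)\simeq\PolSp(\sqrt q)$ is correct and matches the paper's: full faithfulness comes from the identification $\Hom_{\PolSp}\bigl((X_1,\lambda_1),(X_2,\lambda_2)\bigr)=D^1\varphi_0$, which is unchanged under base change because $\End^0$ is, and essential surjectivity is exactly Lemma~\ref{lem:Qisog-descend} read through the bijection~\eqref{eq:7}.

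The gap is in your final paragraph. You assert that ``an invertible $\Q$-isogeny respecting integral polarizations on both sides is an isomorphism, as its inverse is again a morphism.'' This is false: by Lemma~\ref{P.2} the category $\PolSp(\sqrt p)$ is a \emph{connected} groupoid, so every two objects---including any two non-isomorphic principally polarized ones---are isomorphic in $\PolSp$. Concretely, for a single principally polarized $(X,\lambda)$ one has $\Aut_{\PolSp}(X,\lambda)=D^1$, which is infinite, whereas the honest automorphism group $\Aut(X,\lambda)=\End(X)^1$ is finite. The inverse of a $\Q$-isogeny is again a $\Q$-isogeny, not an honest morphism of abelian varieties, so your parenthetical justification does not go through; and invoking Lemma~\ref{lem:genus-polarization} does not help, since that lemma concerns integrality of polarizations within a genus, not integrality of the isogenies between objects.

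The correct route is the one you sketched in your second paragraph but then dropped: base change preserves not only $\Hom^0$ but the \emph{integral} lattice $\Hom(X_1,X_2)\subset\Hom^0(X_1,X_2)$, because by Tate's theorem this lattice is cut out by the conditions $\varphi_\ell(T_\ell(X_1))\subseteq T_\ell(X_2)$ for $\ell\neq p$ and $\varphi_p(M(X_1))\subseteq M(X_2)$ at $p$, and Lemma~\ref{lem:Qisog-descend} (via Lemma~\ref{lem:dieu}) matches these lattices exactly between $\F_p$ and $\F_q$. Hence the subset of honest isomorphisms inside each $\Hom_{\PolSp}$ is preserved and reflected by base change. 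Combined with essential surjectivity and the preservation of principality (which you handled correctly via self-duality of the local lattices), this gives the bijection $\ppav(\sqrt p)\simeq\ppsp(\sqrt q)$.
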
  

 \begin{proof}
Let $\varphi: (X_1, \lambda_1)\to (X_0, \lambda_0)$ be a
morphism in $\PolSp(\sqrt{p})$. Then   \[\Hom((X_1, \lambda_1), (X_0, \lambda_0))=D^1\varphi=\Hom((X_1,
     \lambda_1)\otimes_{\F_p} \F_q, (X_0, \lambda_0)\otimes_{\F_p}
     \F_q),  \]
where $D^1=G^1(\Q)$,  the reduced norm one subgroup of
$D^\times$. This shows that the base change functor is fully
faithful.  Fix $(Y_0, 
   \mu_0)\coloneqq (X_0, \lambda_0)\otimes_{\F_p} \F_q$ and let $(Y,
   \mu)/\F_q$ be an object  of
   $\PolSp(\sqrt{q})$.  From Lemma~\ref{P.2}, there exists a $\Q$-isogeny
   $\phi: (Y, \mu)\to (Y_0,    \mu_0)$.
   It follows from Lemma~\ref{lem:Qisog-descend} that there exists
   $(X, \varphi) \in \Qisog(X_0)$ and an isomorphism
   $\alpha: Y \to X\otimes_{\F_p} \F_q$ such that
   $\phi= (\varphi\otimes_{\F_p}\F_q)\circ\alpha$.  From the bijection (\ref{eq:7}), 
   $\alpha: (Y, \mu)\to (X, \varphi^*\lambda_0)\otimes_{\F_p}\F_q$ is
   an isomorphism of $\Q$-polarized abelian varieties.  Therefore,  every
   object of $\PolSp(\sqrt{q})$ descends to $\PolSp(\sqrt{p})$, and the base change
   functor is an equivalence of categories. 
\end{proof}

\subsubsection{The  genera classifcation  of $\ppav(\sqrt{p})$} 
For the rest of this paper, we focus exclusively on the Weil
$p$-number $\pi=\sqrt{p}$.  In particular, $A=\Z[\sqrt{p}]$ from now
on. From the classification of \dieu modules (particularly,
Lemma~\ref{lem:unique-dieu-p}), we see that to classify
$\ppav(\sqrt{p})$ into genera, it is enough to classify the isometric classes of self-dual
      $A_2$-lattices in $(V_2, \psi_2)$, as $A_\ell$ may  be a non-maximal order only when $\ell=2$.. 








We first give a brief
recount of the genus classification of the unpolarized case in \cite[\S6]{waterhouse:thesis} and
\cite[\S6.1]{xue-yang-yu:ECNF}. 
Recall that $\Isog(\sqrt{p})$ denotes
the set of $\F_p$-isomorphism classes of abelian surfaces in the
simple isogeny class corresponding to  $\pi=\sqrt{p}$. 



Let $X_0/\F_p$ be an arbitrary abelian surface with $\pi_{X_0}^2=p$
and $(X, \varphi)$ be a member of $\Qisog(X_0)$. For
every prime $\ell$ (including $\ell=p$), $T_\ell(X)$ is a full $A_\ell$-lattice in
$V_\ell\simeq F_\ell^2$. 
If either $p\not\equiv 1\pmod{4}$ or $\ell\neq 2$, then $A_\ell=O_{F_\ell}$, and hence
$T_\ell(X)\simeq O_{F_\ell}^2$. Thus
$\Isog(\sqrt{p})$ forms a single genus when $p\not\equiv 1\pmod{4}$. 
If $p\equiv 1 \pmod{4}$ and
$\ell=2$, then $[O_{F_2}:A_2]=2$, and $T_2(X)$ is isomorphic to one of the
following three $A_2$-lattices in $(F_2)^2$:
\begin{equation}
  \label{eq:26}
O_{F_2}^2,\qquad A_2\oplus O_{F_2},\qquad   A_2^2. 
\end{equation}
Accordingly, $\Isog(\sqrt{p})$ decomposes into three genera when
$p\equiv 1\pmod{4}$:
\begin{equation}
  \label{eq:28}
  \Isog(\sqrt{p})=\Lambda_1^\un\amalg \Lambda_8^\un\amalg \Lambda_{16}^\un.
\end{equation}
Here the superscript $^\un$ stands for ``unpolarized'', and
$\Lambda_1^\un$ is the subset consisting of all members
$[X]\in \Isog(\sqrt{p})$ such that $T_2(X)\simeq O_{F_2}^2$. The
genera $\Lambda_8^\un$ and $\Lambda_{16}^\un$ are defined similarly.
The subscripts $1, 8, 16$ are chosen for $\Lambda_r^\un$ to indicate
that \begin{equation}\label{eq:end0index}
  [\bbO: \End(X_r)]=r  
\end{equation}
for every $[X_r]\in \Lambda_r^\un$ and every maximal $O_F$-order
$\bbO\subset D$ containing $\End(X_r)$. 
 See
\cite[Theorem~6.2]{waterhouse:thesis}, 
\cite[Theorem~6.1.2]{xue-yang-yu:ECNF} or \S\ref{sect:end-ring} below.  For uniformity, we also put
$\Lambda_1^\un=\Isog(\sqrt{p})$ when $p\not\equiv 1\pmod{4}$.  As a
convention, when a result is stated for $\Lambda_r^\un$ with
$r\in \{1, 8, 16\}$, it means that the said result holds for
$\Lambda_1^\un$ for all primes $p$, and also for $\Lambda_8^\un$ and
$\Lambda_{16}^\un$ when $p\equiv 1\pmod{4}$.

\begin{prop}\label{P.4}
  Consider the following  forgetful
  map:
  \begin{equation}
    \label{eq:66}
    f: \ppav(\sqrt{p})\to \Isog(\sqrt{p}), \qquad [X, \lambda] \mapsto
    [X]. 
  \end{equation}
Put $ \Lambda_1^\pp\coloneqq \ppav(\sqrt{p})$ if $p\not\equiv 1\pmod{4}$, and $ \Lambda_r^\pp\coloneqq f^{-1}(\Lambda_r^{\un})$ for $r\in \{1, 8, 16\}$ if
$p\equiv 1\pmod{4}$. Then the following holds true: 
\begin{enumerate}
\item[(1)] when $p\not\equiv
1\pmod{4}$, $\Lambda_1^\pp=\ppav(\sqrt{p})$ forms a single genus;  
\item[(2)] when $p\equiv 1\pmod{4}$,  $\Lambda_8^\pp=\emptyset$, and 
  \begin{equation}
    \label{eq:50}
    \ppav(\sqrt{p})=\Lambda_1^\pp\amalg \Lambda_{16}^\pp,
  \end{equation}
where each $\Lambda_r^\pp$ for $r\in \{1, 16\}$ forms a single
nonempty genus.
\end{enumerate}
\end{prop}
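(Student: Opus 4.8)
The plan is to carry out the three-step procedure of \S\ref{sect:3-steps}. Step~1 has in effect been done in Lemma~\ref{P.2}: every member of $\ppav(\sqrt{p})$ lies in the single $\Q$-isogeny class of a fixed $\Q$-polarized abelian surface $(X_0,\lambda_0)/\F_p$ with $\pi_{X_0}^2=p$, the relevant algebraic group $G^1$ is the reduced norm one subgroup of $\underline{D}^\times$, and one gets one symplectic space $(V_\ell,\psi_\ell)$ for each $\ell$. Step~2 amounts to enumerating genera. By the Corollary in \S\ref{sec:method-cal}, the genera inside $\Qisog^\pp(X_0,\lambda_0)$ are in bijection with $\grM=\grM_p\times\prod_{\ell\neq p}\grM_\ell$; since $\grd_A=4p$ one has $S_\pi=\{2,p\}$, hence $\abs{\grM_\ell}=1$ for every $\ell\notin\{2,p\}$, while $\abs{\grM_p}=1$ by Remark~\ref{rem:dieu-prime-p} (which rests on $A_p=O_{F_p}$ and Lemma~\ref{LC.2}). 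So the genera of $\ppav(\sqrt{p})$ are classified by the isometry classes of self-dual $A_2$-lattices in $(V_2,\psi_2)$, in agreement with Lemma~\ref{lem:genus-preliminary}.

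If $p\not\equiv 1\pmod 4$ then $A_2=O_{F_2}$, so Lemma~\ref{LC.2} gives $\abs{\grM_2}=1$; thus $\ppav(\sqrt{p})$ forms a single genus, which is nonempty by the Weil-restriction construction of \S\ref{sect:ppav-nonempty}. This proves part~(1). So assume from now on that $p\equiv 1\pmod 4$, so that $[O_{F_2}:A_2]=2$ and the underlying $A_2$-module of any $A_2$-lattice in $V_2\cong F_2^{2}$ is isomorphic to exactly one of $O_{F_2}^{\oplus 2}$, $A_2\oplus O_{F_2}$, $A_2^{\oplus 2}$, as recorded in \eqref{eq:26}.

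The heart of the matter — which I would isolate as the $2$-adic lattice computation postponed to \S\ref{sec:LC} — is the claim that there are exactly two isometry classes of self-dual $A_2$-lattices in $(V_2,\psi_2)$, with underlying $A_2$-modules $O_{F_2}^{\oplus 2}$ and $A_2^{\oplus 2}$ respectively, and that \emph{no} self-dual $A_2$-lattice has underlying module $A_2\oplus O_{F_2}$. To see why the last statement should hold, write $\psi_2=\Tr_{F_2/\Q_2}\circ\, b$ for a nondegenerate alternating $F_2$-bilinear form $b$ on $V_2$ (using that $\psi_2$ is $F_2$-balanced and that $F/\Q$ is unramified at $2$, so the different $\grd_{O_{F_2}/\Z_2}$ is trivial), take a lattice $L=A_2 e_1\oplus O_{F_2}e_2$, and compute its $\psi_2$-dual $L^{\vee}$: pairing against the summand $O_{F_2}e_2$ forces the first coordinate of a dual vector to lie in $b(e_1,e_2)^{-1}\,(\,\grc_{A_2/\Z_2}:O_{F_2})$, which is an $O_{F_2}$-module (colon ideals against $O_{F_2}$ always are), so the projection of $L^{\vee}$ to the first axis is an $O_{F_2}$-lattice and cannot equal the summand $A_2$ of $L$; hence $L\neq L^{\vee}$. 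For the two surviving module types the unimodular lattice is unique up to isometry by the argument of Lemma~\ref{LC.2}. Granting the claim, $\abs{\grM}=\abs{\grM_2}=2$, so $\Qisog^\pp(X_0,\lambda_0)$ has precisely two genera, and both are nonempty: $\grM\neq\emptyset$ because $\ppav(\sqrt{p})\neq\emptyset$ by \S\ref{sect:ppav-nonempty}, so the surjectivity of $\Phi$ in \eqref{eq:71} realises both elements of $\grM$.

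It then remains to match the two genera with $\Lambda_1^\pp$ and $\Lambda_{16}^\pp$. If $[X,\lambda]$ lies in the genus whose self-dual lattice has module type $O_{F_2}^{\oplus 2}$ then $T_2(X)\cong O_{F_2}^{2}$, so $[X]\in\Lambda_1^\un$ by the description in \S\ref{subsec:genera-isogsqrtp}; for the other genus $T_2(X)\cong A_2^{2}$, so $[X]\in\Lambda_{16}^\un$. Hence the forgetful map $f$ carries these two genera into $\Lambda_1^\un$ and $\Lambda_{16}^\un$, so $\Lambda_1^\pp=f^{-1}(\Lambda_1^\un)$ and $\Lambda_{16}^\pp=f^{-1}(\Lambda_{16}^\un)$ are exactly these two nonempty genera, the decomposition $\ppav(\sqrt{p})=\Lambda_1^\pp\amalg\Lambda_{16}^\pp$ of \eqref{eq:50} holds, and $\Lambda_8^\pp=f^{-1}(\Lambda_8^\un)=\emptyset$ since no self-dual $A_2$-lattice has module type $A_2\oplus O_{F_2}$. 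The main obstacle is the $2$-adic lattice classification invoked in the third paragraph — concretely the non-existence of a self-dual $A_2$-lattice of the mixed type $A_2\oplus O_{F_2}$ and the uniqueness within each of the two good types; everything else is bookkeeping on top of Lemmas~\ref{P.2} and~\ref{lem:genus-preliminary}, the surjectivity of $\Phi$, and the genus description of \S\ref{subsec:genera-isogsqrtp}.
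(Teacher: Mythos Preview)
Your argument is correct and follows essentially the same route as the paper. Two small remarks: for the nonexistence of a self-dual lattice of type $A_2\oplus O_{F_2}$, the paper also passes to the $F_2$-bilinear form $\psi_F$ with $\Tr\circ\psi_F=\psi_2$, but instead of reading off the $e_1$-component of $L^\vee$ it compares $L$ with $\wt L:=O_{F_2}L$ and observes $L\subsetneq\wt L\subseteq\wt L^\vee\subsetneq L^\vee$ --- your direct computation of the projection is an equally clean way to reach the same contradiction. For the uniqueness of the self-dual lattice of type $A_2^{\oplus2}$, note that Lemma~\ref{LC.2} as stated only covers the maximal order $O_{F_2}$; the paper handles the free $A_2$-lattice case via Lemma~\ref{LC.3} (Gorenstein orders over a complete DVR), so your phrase ``by the argument of Lemma~\ref{LC.2}'' should point there instead.
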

\begin{proof}
If $p\not\equiv 1\pmod{4}$, then $A_2=O_{F_2}$, and there is a unique
isometric class of self-dual $O_{F_2}$-lattices in $(V_2, \psi_2)$ by
Lemma~\ref{LC.2}. 
 Part (1) of  the proposition follows directly, so assume that $p\equiv
 1\pmod{4}$ for the rest of the proof.

We first prove that $\Lambda_8^\pp=\emptyset$. It is enough to show
that there is  no self-dual  $A_2$-lattice $L$ in 
  $(V_2, \psi_2)$ such that $L\simeq O_{F_2}\oplus A_2$.    Suppose that such a lattice $L$ exists. Then $L=O_{F_2}e_1+A_2 e_2$
  for some $F_2$-basis $e_1,e_2$ of $V_2$ and $\psi_2(L,L)\subset
  \Z_2$. Denote by $\psi_{2, F}:V_2\times V_2 \to F_2$ the
unique $F_2$-bilinear pairing such that 
$\Tr_{F_2/\Q_2}\circ \psi_{2, F}=\psi_2$.
  If
  we put $a\coloneqq\psi_{2,F}(e_1,e_2)$, then $\psi_{2,F}(L,L)=O_{F_2}a$.  Clearly,
  the lattice $\wt L\coloneqq O_{F_2}e_1+O_{F_2} e_2$ also has the property
  $\psi_F(\wt L,\wt L)=O_{F_2} a$ and
  $\psi_2(\wt L,\wt L)\subset \Z_2$. Let $\wt{L}^\vee$ be the
  dual lattice of $\wt{L}$ with respect to $\psi_2$. Then   $L\subsetneq \wt L\subseteq \wt L^\vee \subsetneq L^\vee$, which is
  absurd.

Next, the Tate module $T_2(X)$ of any $[X]\in
\Lambda_{16}^\un$ is a free
$A_2$-lattice of rank $2$.  According to Proposition~\ref{LC.3}, there is a unique isometric class of
self-dual free $A_2$-lattices in $(V_2,
\psi_2)$. Thus $\Lambda_{16}^\pp$ forms a single nonempty genus. The case for
$\Lambda_1^\pp$ is proved similarly, except that one  applies 
Proposition~\ref{LC.2} instead of Proposition~\ref{LC.3}. 
\end{proof}

\begin{rem}\label{rem:8-pol-mod}
  Let $[X]$ be a member in $\Lambda^\un_8$, and put $L\coloneqq T_2(X)$ and $\wt
      L\coloneqq O_{F_2} L$.  
      Let $\eta: X\to \wt X$ be the isogeny
      corresponding to the inclusion $L\subset \wt L$, which is the minimal
      isogeny with the property that $O_F\subset \End(\wt X)$.  It
      follows from the  proof of Proposition~\ref{P.4} and
      \cite[Theorem~16.4]{Milne-AV} that every  $\lambda\in \calP(X)$
      is of the form $\eta^*(\wt\lambda)$ for some $\wt \lambda\in
      \calP(\wt X)$, where $\calP(X)$ denotes the N\'eron-Severi group
      of $X$ as in  \S\ref{sect:neron-severi}. 
 Therefore, $\eta^*$ induces a bijection $\calP(\wt X)\to \calP(X)$
 that identifies the set of polarizations $\calP_+(\wt X)\subset
 \calP(\wt X)$ with $\calP_+(X)\subset \calP(X)$, so we write
      \begin{equation}
        \label{eq:36}
     \eta^*:   (\calP(\wt X), \calP_+(\wt X)) \cong (\calP(X), \calP_+(X)).
   \end{equation}
   The ordered pair $(\calP(X), \calP_+(X))$ will be investigated in
   detail in \S\ref{sec:PP}.
\end{rem}

\section{The class number and type number of $\ppav(\sqrt{p})$}
\label{sec:S4}
\numberwithin{thmcounter}{section}

We move on to  Step 3 of our strategy for computing
$\abs{\ppav(\sqrt{p})}$  as described in
\S\ref{sect:3-steps}, so keep the notation of the previous section. Particularly, 
$F=\Q(\sqrt{p})$ and $A=\Z[\sqrt{p}]$. 
  Let $D=D_{\infty_1, \infty_2}$ be the unique totally definite
quaternion $F$-algebra that is unramified at all the finite primes of
$F$ as in \eqref{eq:24}. 
Recall that  the algebraic group $G$ defined in \eqref{eq:17} is just the
multiplicative group $\ul D^\times$, and $G^1$ defined in \eqref{eq:20} 
  is  the reduced norm one subgroup of $\ul D^\times$. 
Given a subset $S\subseteq \whD$, we write $S^1$ for the subset of
elements in $S$ with reduced norm $1$, that is,
$S^1\coloneqq \{x\in S\mid \Nr(x)=1\}$. Thus $G^1(\whQ)=\whD^1$ and
$G^1(\Q)=D^1$. Assume that one of the following conditions holds:
\begin{itemize}
    \item $p\not \equiv 1\pmod{4}$ and $r=1$;
    \item $p\equiv 1\pmod{4}$ and $r\in \{1, 16\}$.
\end{itemize}
Let $\Lambda_r^\pp$ be the genus of principally polarized abelian
surfaces as defined in Proposition~\ref{P.4}. 
Fix a member $[Y_r, \lambda_r]\in \Lambda_r^\pp$ and put
\begin{equation}
  \label{eq:74}
 \grO_r\coloneqq \End(Y_r).
\end{equation}
From Proposition~\ref{prop:H-genus-isom}, there is a canonical bijection
  \begin{equation}
\label{eq:75}
\Lambda_r^\pp\simeq D^1\bsh \whD^1/\wh\grO_r^1   
\end{equation}
sending $[Y_r, \lambda_r]$ to the identity class.  Thus to compute
$\abs{\Lambda_r^\pp}$,  we need a concrete characterization of the
$A$-orders in $D$ that appear as endomorphism rings of the underlying
(principally polarizable) abelian surfaces of $\Lambda_r^\pp$.
The class number calculations  will be 
postponed to Sections~\ref{sec:max-ord}--\ref{sec:class-number}.

\begin{sect}\label{sect:end-ring}
We first recount the description of endomorphism rings in the
unpolarized case.
  Let $[X_1]$ be a member of $\Lambda_1^\un$ and put
  $\calO_1\coloneqq \End(X_1)$. Then $\calO_1$ is a maximal order in $D$ since
  $\calO_\ell=\End_{A_\ell}(T_\ell(X_1))\simeq \Mat_2(O_{F_\ell})$ for
  every prime $\ell$. Fix an isomorphism $T_2(X_1)\simeq O_{F_2}^2$,
  which induces an identification
  $\calO_1\otimes\Z_2=\Mat_2(O_{F_2})$.  If $p\equiv 1\pmod{4}$, then
  the inclusion $A_2\oplus O_{F_2}\hookrightarrow O_{F_2}^2$
  (resp.~$A_2^2\hookrightarrow O_{F_2}^2$) gives rise to an isogeny
  $X_8\to X_1$ (resp.~$X_{16}\to X_1$) with $[X_r]\in \Lambda_r^\un$
  for $r\in\{8, 16\}$. The endomorphism rings $\calO_r\coloneqq \End(X_r)$ for
  $r=8, 16$ are characterized by
\begin{equation}
\label{eq:31}
\begin{gathered}
  (\calO_8)_2\coloneqq \calO_8\otimes_{\Z} \Z_2=
\begin{pmatrix}
  A_2 & 2 O_{F_2} \\  O_{F_2} & O_{F_2}\\
\end{pmatrix},\qquad (\calO_{16})_2=\Mat_2(A_2),
 \\   
(\calO_r)_\ell=(\calO_1)_\ell\qquad \forall\,
\text{prime } \ell\neq 2, \quad r\in \{8,16\}. 
\end{gathered}
\end{equation}
The order $\calO_r$ has index $r$ in $\calO_1$.

From Proposition~\ref{prop:H-genus-isom}, there is a natural bijection
\begin{equation}
  \label{eq:32}
  \Lambda_r^\un\simeq G(\Q)\bsh G(\whQ)/\wcO_r^\times=D^\times\bsh
  \whD^\times/\wcO_r^\times  
\end{equation}
sending the base member $[X_r]$ to the identity class.   This
also establishes a bijection between $\Lambda_r^\un$ and the set
$\Cl(\calO_r)$ of locally principal right ideal $\calO_r$-classes as
discovered by Waterhouse in
\cite[Theorem~6.2]{waterhouse:thesis}. Indeed, from \cite[\S
III.5]{vigneras}, $\Cl(\calO_r)$ admits
the same adelic double coset description as in (\ref{eq:32}).  Thus the 
computation of $\abs{\Isog(\sqrt{p})}$ is reduced to that of the class numbers
of the quaternion orders $\calO_r$. 

When $p\equiv 1\pmod{4}$, the class
number formula for $h(\calO_1)\coloneqq \abs{\Cl(\calO_1)}$ is first computed explicitly by Peters
\cite[p.~363]{Peters1968}, and also by Kitaoka \cite{kitaoka:nmj1973}
in light of its 
relationship to the type number $t(\calO_1)$ as in \eqref{eq:43} below,
which in turn can be interpreted as the 
proper class number of quaternary positive definite even
quadratic lattices of discriminant $p$
\cite[p.~85]{chan-peters}. Ponomarev \cite{ponomarev:aa1981, ponomarev:aa1976}
extended Kitaoka's result to all prime $p$. Vign\'eras
\cite{vigneras:ens} gave explicit formulas for the class number of any
Eichler order of square-free level in a totally definite quaternion
algebra over an arbitrary quadratic real field. When $p\equiv 1\pmod{4}$, the class number
formulas for $\calO_8$ and $\calO_{16}$ are worked out by the present
authors together with Tse-Chung Yang in \cite{xue-yang-yu:ECNF}.
\end{sect}

\begin{rem}
 We provide another proof for 
$\Lambda_8^\pp=\emptyset$. Suppose that on the
  contrary  a member $[X,\lambda]\in \Lambda_{8}^\pp$ exists.
  The Rosati involution  induced by $\lambda$ necessarily leaves the
  endomorphism rings $\End(X)$ and $\End(X)\otimes\Z_2$ stable. On the
  other hand, the Rosati involution coincides with the canonical
  involution.  This already leads to a contradiction since
  $\End(X)\otimes\Z_2$, which is conjugate to $\calO_{8}\otimes \Z_2$
  in (\ref{eq:31}), is not stable under the canonical involution. 
\end{rem}

\begin{sect}
The bijection $\Lambda_r^\un\simeq \Cl(\calO_r)$ fits into the general
framework of the arithmetic of quaternion algebras as follows. Two
$A$-orders $\calO$ and $\calO'$ in $D$ are said to belong to \emph{the
  same genus} if there exists $x\in \whD^\times$ such that
$\wcO'=x \wcO x^{-1}$, or equivalently, if $\calO_\ell$ and
$\calO'_\ell$ are $A_\ell$-isomorphic at every prime $\ell\in\bbN$.  For
example, all maximal orders of $D$ belong to the same genus. The orders $\calO$ and $\calO'$ are said to be of the \emph{same
  type} if they are $A$-isomorphic (equivalently,
$D^\times$-conjugate). Let
\[ \dbr{\calO}\coloneqq \{\alpha \calO \alpha^{-1}\mid \alpha\in D^\times\} \] 
be
the type of $\calO$, and $\Tp(\calO)$ be the set of types of
$A$-orders in the genus of $\calO$.  It can be described adelically as
\begin{equation}
  \label{eq:37}
  \Tp(\calO)\simeq D^\times\bsh   \whD^\times/\calN(\wcO), 
\end{equation}
where $\calN(\wcO)$ denotes the normalizer of $\wcO$ in
$\whD^\times$. 
Given a locally principal right
$\calO$-ideal $I$, the left order of $I$ is defined to be
\begin{equation}
  \label{eq:34}
\calO_l(I)=\{\alpha\in D\mid \alpha I\subseteq I\}.   
\end{equation}
If we write $\whI=x\wcO$ for some $x\in \whD^\times$, then $\widehat{\calO_l(I)}=x
\wcO x^{-1}$,  so $\calO_l(I)$ belongs to the same genus as
$\calO$. There is a natural surjective map
\begin{equation}
  \label{eq:39}
 \Upsilon:\Cl(\calO)\twoheadrightarrow \Tp(\calO),\qquad 
[I]\mapsto \dbr{\calO_l(I)}.
\end{equation}

Now let $\calO=\calO_r$ for some $r\in \{1, 8, 16\}$. We have a
commutative diagram
\begin{equation}
  \label{eq:38}
  \begin{tikzcd}
    \Lambda_r^\un\ar[r, leftrightarrow, "\simeq"]\ar[rd, leftrightarrow,
    "\simeq"]  & \Cl(\calO_r)\ar[d, leftrightarrow, "\simeq"]\ar[r,
    twoheadrightarrow, "\Upsilon"] & \Tp(\calO_r)\ar[d, leftrightarrow, "\simeq"]\\
          & D^\times\bsh
  \whD^\times/\wcO_r^\times\ar[r, twoheadrightarrow] & D^\times\bsh
  \whD^\times/\calN(\wcO_r)
  \end{tikzcd}
\end{equation}
where the bottom horizontal map is the canonical projection. From
(\ref{eq:29}), the composition of the maps in the top row coincides
with the following  map
\begin{equation}
  \label{eq:40}
\Upsilon^\un:  \Lambda_r^\un\twoheadrightarrow \Tp(\calO_r), \qquad [X]\mapsto \dbr{\End(X)}. 
\end{equation}
Thus an $A$-order $\calO'\subset D$ is isomorphic
to the endomorphism ring of some $[X_r']\in \Lambda_r^\un$ if and
only if it belongs to the same genus as $\calO_r$. Compare with
\cite[Theorem~3.13]{waterhouse:thesis} and 
Lemma~\ref{lem:spinor-genus} below.

The explicit formula for the type number
$t(\calO_1)\coloneqq \abs{\Tp(\calO_1)}$ can be traced back to the work of Peters
\cite{Peters1968}, Kitaoka \cite{kitaoka:nmj1973}, Ponomarev
\cite{ponomarev:aa1976, ponomarev:aa1981} as before.  The type number
formulas for $\calO_8$ and $\calO_{16}$ are computed by the
present authors in \cite[\S4]{xue-yu:type_no}. In particular, it is shown
there that
\begin{equation}
  \label{eq:41}
  \calN(\wcO_1)=\whF^\times\wcO_1^\times,  \qquad
   \calN(\wcO_{16})=\whF^\times\wcO_{16}^\times,
   \quad\text{while}\quad    \calN(\wcO_8)=\whF^\times\wcO_4^\times.
\end{equation}
Here $\calO_4\coloneqq O_F\calO_8$, which is the unique suborder of $\calO_1$
such that
\begin{equation}
\label{eq:C.3}
   \calO_4\otimes \Z_2=
\begin{pmatrix}
  O_{F_2} & 2 O_{F_2} \\  O_{F_2} & O_{F_2}\\
\end{pmatrix},\qquad (\calO_4)_\ell=(\calO_1)_\ell,\ 
\forall \ell\neq 2. 
\end{equation}
According to \cite[Proposition~4.1]{xue-yu:type_no}, the
class numbers $h(\calO_r)$ and the type numbers $t(\calO_r)\coloneqq \abs{\Tp(\calO_r)}$ are related by
\begin{equation}
  \label{eq:43}
  h(\calO_1)=h(O_F)t(\calO_1),\quad
  h(\calO_{16})=h(A)t(\calO_{16}),\quad h(\calO_4)=h(O_F)t(\calO_8). 
\end{equation}
See \eqref{eq:131} for the formula of $h(A)$.
\end{sect}


Now let us move on to the principally polarized case. Recall from
\eqref{eq:74} that $\grO_r=\End(Y_r)$ denotes the endomorphism ring of
a fixed member $[Y_r, \lambda_r]\in \Lambda_r^\pp$.  For any other
member $[Y_r', \lambda_r']\in \Lambda_r^\pp$, the endomorphism ring
$\End(Y_r')$ belongs to the same genus as $\grO_r$. However, we would like to emphasize that the
converse needs not to be true, that is, a priori not every order in
the genus of $\grO_r$ is necessarily the endomorphism ring of some underlying
(\emph{principally polarizable}) abelian surface of $\Lambda_r^\pp$.
Thus we need to characterize the image of the map
 \begin{equation}
   \label{eq:68}
\Upsilon^\pp:  \Lambda_r^\pp\to  \Tp(\grO_r),\qquad   [X, \lambda]\mapsto
 \dbr{\End{X}}.   
 \end{equation}
For this purpose,  let us recall the notion of \emph{spinor genus} of orders  from
\cite[\S1]{Brzezinski-Spinor-Class-gp-1983}.

\begin{defn}\label{defn:spinor-genus-class}
  Two $A$-orders $\calO, \calO'\subset D$  are in the \emph{same
  spinor genus}  if there
  exists  $x\in D^\times\whD^1$ such that
  $\wcO'= x \wcO x^{-1}$. 
\end{defn}
Clearly, ``being in the same spinor genus'' is an equivalence
relation that is finer than ``being in the same genus'' and coarser
than ``being of the same type''.

\begin{lem}\label{lem:spinor-genus}
Keep $\grO_r=\End(Y_r)$ for some $[Y_r, \lambda_r]\in
\Lambda_r^\pp$ as in (\ref{eq:74}).   For any $A$-order $\calO\subset D$, there exists
  $[X, \lambda]\in \Lambda_r^\pp$ such that $\End(X)\simeq
  \calO$ if and only if $\calO$ belongs to the same spinor genus as
  $\grO_r$. 
\end{lem}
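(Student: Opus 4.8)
The plan is to characterize the image of $\Upsilon^\pp$ by reducing the principally polarized adelic double coset description \eqref{eq:75} to a statement about orders. First I would recall that the map $\Upsilon^\un\colon \Lambda_r^\un\to \Tp(\calO_r)$ from \eqref{eq:40} has image all of $\Tp(\calO_r)$, i.e.~an order is isomorphic to $\End(X)$ for some $[X]\in\Lambda_r^\un$ exactly when it lies in the genus of $\calO_r$. The refinement needed here is that once we impose a principal polarization, the endomorphism ring is constrained to the \emph{spinor genus} of $\grO_r$ rather than merely the genus. The key input is that $G^1=\underline D^{\times,1}$ is the reduced-norm-one group, which is simply connected and satisfies strong approximation (the totally definite quaternion algebra $D$ is ramified at both archimedean places but those are the only obstructions, and away from them $D^\times$ is isotropic over the relevant completions so strong approximation applies to $G^1$ over the set $S=\{\text{archimedean places}\}$; this is exactly the failure-of-SA remark in the introduction, but for $G^1$ we do have SA because we are allowed to move the base field to $\mathbb{Q}$ where $D^1$ is $\mathbb{Q}$-anisotropic — so actually the relevant statement is that $D^\times\widehat D^1$ has finite index in $\widehat D^\times$, measured by the spinor class group).

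The core of the argument is the following. Given $[X,\lambda]\in\Lambda_r^\pp$, write it via \eqref{eq:75} as a class $D^1 x\,\widehat\grO_r^1$ with $x\in\widehat D^1$; then by \eqref{eq:29} applied with $\alpha=x\in\widehat D^1\subset\widehat D^\times$, we get $\End(X)\otimes\widehat{\mathbb Z}=x(\grO_r\otimes\widehat{\mathbb Z})x^{-1}$ with $x\in\widehat D^1$, so $\End(X)$ is in the same genus as $\grO_r$ and moreover conjugation is by an element of $\widehat D^1\subset D^\times\widehat D^1$, hence $\End(X)$ is in the same spinor genus as $\grO_r$. This proves the "only if" direction. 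For the "if" direction, suppose $\calO$ is in the same spinor genus as $\grO_r$: there is $x\in D^\times\widehat D^1$ with $\widehat\calO=x\widehat\grO_r x^{-1}$. Write $x=\gamma y$ with $\gamma\in D^\times$ and $y\in\widehat D^1$; replacing $\calO$ by the $D^\times$-conjugate $\gamma^{-1}\calO\gamma$ (which has the same type, so does not affect the conclusion) we may assume $x=y\in\widehat D^1$. Then $y$ represents a class in $D^1\backslash\widehat D^1/\widehat\grO_r^1$, hence via \eqref{eq:75} a member $[X,\lambda]\in\Lambda_r^\pp$, and by \eqref{eq:29} again $\End(X)\otimes\widehat{\mathbb Z}=y\widehat\grO_r y^{-1}=\widehat\calO$, whence $\End(X)\simeq\calO$ (as $\calO$ is determined by its completions together with $\End^0=D$).

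The main obstacle I expect is bookkeeping around two points. First, one must be careful that the bijection \eqref{eq:75} sends a class $D^1 x\widehat\grO_r^1$ to an abelian surface whose endomorphism ring completion is genuinely $x\widehat\grO_r x^{-1}$ and not $x^{-1}\widehat\grO_r x$ — this is just tracking the conventions in \eqref{eq:16} and \eqref{eq:29}, but the $\alpha\mapsto\varphi a\varphi^{-1}$ vs.~$\varphi^{-1}a\varphi$ direction in \eqref{eq:15} must be matched with care. Second, one should confirm that $\Lambda_r^\pp$ really is a single genus (established in Proposition~\ref{P.4}) so that \emph{every} $y\in\widehat D^1$ produces a legitimate class, i.e.~there is no further local obstruction at $p$ or at $2$ to realizing the corresponding self-dual Dieudonné/Tate lattice — this is guaranteed because $y\in\widehat D^1$ preserves the symplectic form (reduced norm one forces it to act as an isometry of $(V_\ell,\psi_\ell)$ up to the $F_\ell$-structure), so self-duality is preserved under the $y$-translation. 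With those two points in hand the equivalence is immediate. I would also remark, as a corollary to be used later, that $\abs{\Lambda_r^\pp}$ is then a sum over the spinor genera in the genus of $\calO_r$ — but that refinement belongs to the subsequent mass/class-number computation rather than to this lemma.
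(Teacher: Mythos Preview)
Your proof is correct and follows essentially the same approach as the paper. The paper packages the argument into the commutative diagram~\eqref{eq:73}, whose bottom row of canonical projections $D^1\backslash\widehat D^1/\widehat\grO_r^1\to D^\times\backslash\widehat D^\times/\widehat\grO_r^\times\to D^\times\backslash\widehat D^\times/\calN(\widehat\grO_r)$ has image precisely the spinor genus of $\grO_r$; your two directions are exactly the unpacking of this statement. Your digression on strong approximation is unnecessary here (and somewhat muddled): nothing in the lemma requires it, since the spinor genus is \emph{defined} via $D^\times\widehat D^1$ and the argument is purely a matter of tracking the image of the composite map.
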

\begin{proof}
Clearly, $\Upsilon^\pp:\Lambda_r^\pp\to \Tp(\grO_r)$ is the composition of $f:
\Lambda_r^\pp\to \Lambda_r^\un$ with the map $\Upsilon^\un$ in (\ref{eq:40}) (with
$\grO_r$ in place of $\calO_r$).
Combining (\ref{eq:75}) and (\ref{eq:38}), we
get a commutative diagram
\begin{equation}
  \label{eq:73}
  \begin{tikzcd}
    \Lambda_r^\pp\ar[r, "f"] \ar[d, "\simeq"]& \Lambda_r^\un\ar[r,
    twoheadrightarrow, "\Upsilon^\un"] \ar[d, "\simeq"] & \Tp(\grO_r) \ar[d, "\simeq"]\\
D^1\bsh \whD^1/\wh\grO_r^1 \ar[r]& D^\times\bsh
\whD^\times/\wh\grO_r^\times\ar[r, twoheadrightarrow] & D^\times\bsh \whD^\times/\calN(\wh\grO_r)
  \end{tikzcd}
\end{equation}
Both of 
the bottom horizontal maps are canonical projections. Clearly,  $\dbr{\calO}\in \Upsilon^\pp(\Lambda_r^\pp)$ if
and only if $\calO$ belongs to the same spinor genus as
  $\grO_r$. 
\end{proof}

\begin{prop}\label{prop:spinor-gen-classif}
  \begin{enumerate}[leftmargin=*, label=(\arabic*)]
  \item   If $p\not\equiv 3\pmod{4}$, then all maximal orders of $D$ form a
single spinor genus.   
  \end{enumerate}\vspace*{-2\partopsep}
  \begin{enumerate}[resume]
    \item If $p\equiv 1\pmod{4}$, then all $A$-orders  in the
      genus of $\grO_{16}$ form a
      single spinor genus.
    \item   If $p\equiv 3\pmod{4}$, then the maximal orders of $D$
      separate into two spinor genera. Accordingly the type set
      $\Tp(D)$ of maximal orders in $D$ decomposes
into two  nonempty subsets
\begin{equation}
  \label{eq:67}
  \Tp(D)=\Tp^+(D)\amalg \Tp^{-}(D),  
\end{equation}
where $\Tp^+(D)\coloneqq\img(\Upsilon^\pp)$, and $\Tp^-(D)\coloneqq
\Tp(D)\smallsetminus \Tp^+(D)$. 
  \end{enumerate}
\end{prop}
\begin{proof}
For any $A$-order $\calO\subset D$, we write $\dbr{\calO}_\sg$ for the
spinor genus of $\calO$, and $\SG(\calO)$ for the set of
spinor genera within the genus of $\calO$, regarded as a pointed set with 
base point $\dbr{\calO}_\sg$; see
\cite[Definition~2.2]{Xue-Yu-Selec-2022}.  In other words,
\begin{equation}
  \label{eq:143}
  \SG(\calO)=\{\dbr{\calO'}_\sg \mid \exists x\in \whD^\times \text{
    such that } \wcO'=x \wcO x^{-1}\}.
\end{equation}
From
\cite[(2.2)]{Xue-Yu-Selec-2022}, there is an adelic description of
$\SG(\calO)$ as follows: 
\begin{equation}\label{eq:3xy}
  \SG(\calO)\simeq  (D^\times\whD^1)\bsh \whD^\times/\calN(\wcO)\xrightarrow[\simeq]{\Nr}
F_+^\times\bsh \whF^\times/\Nr(\calN(\wcO)), 
\end{equation}
where the two double coset spaces are canonically bijective via the
reduced norm map. Here we have applied the  Hasse-Schilling-Maass
theorem \cite[Theorem~33.15]{reiner:mo}
\cite[Theorem~III.4.1]{vigneras} to obtain 
$\Nr(D^\times)=F_+^\times$, the subgroup of totally positive elements of
$F^\times$.



For simplicity, let us put $R_1\coloneqq O_F$, and
$R_{16}\coloneqq A$.  Let $  \Pic_+(R_r)$ be the narrow class
group of $R_r$, which can be described adelically as 
\begin{equation}
  \label{eq:14}
  \Pic_+(R_r)\simeq 
\whF^\times/F_+^\times\widehat R_r^\times. 
\end{equation}
From (\ref{eq:41}),
$\calN(\wh\grO_r)=\whF^\times\wh\grO_r^\times$ for $r\in \{1,
16\}$. It follows from \eqref{eq:31} and \eqref{eq:3xy} that
\begin{equation}\label{eq:sg9}
\SG(\grO_r)\simeq
\whF^\times/\big(F_+^\times\whF^{\times 2}\widehat R_r^\times\big), 
\end{equation}
which is canonically identifiable with the \emph{Gauss
  genus group} \cite[Definition~14.29]{Cohn-invitation-Class-Field} 
\begin{equation}
  \label{xeq:11x}
\grG(R_r)\coloneqq \Pic_+(R_r)/\Pic_+(R_r)^2.
\end{equation}
Here $\Pic_+(R_r)^2$ denotes the subgroup of $\Pic_+(R_r)$ consisting
of the classes that are perfect squares in  $\Pic_+(R_r)$, and
$\whF^{\times 2}$ is defined similarly.
First, suppose that $r=1$ so that $R_1=O_F$. It is
well known \cite[Theorem~14.34]{Cohn-invitation-Class-Field} that the
Gauss genus group $\grG(O_F)$ has order $2^{t-1}$, where $t$ is the number of
finite ramified primes   in $F/\Q$, so in our case 
\begin{equation}
  \label{xeq:26x}
 \abs{\grG(O_F)}=
  \begin{cases}
     1 & \text{ if } p\not\equiv 3\pmod{4},\\
     2 & \text{ if } p\equiv 3\pmod{4}.
  \end{cases}
\end{equation}
The proves part (1) and (3) of the lemma. 
Next, suppose that $p\equiv 1\pmod{4}$ and $r=16$ so that
$R_{16}=A$.  We shall show in  Lemma~\ref{lem:odd-class-no} that
the narrow class number $h_+(A)\coloneqq \abs{\Pic_+(A)}$ is odd. It
then 
follows from definition \eqref{xeq:11x} that
\begin{equation}\label{eq:x30y}
  \abs{\grG(A)}=1\qquad \text{if}\quad p\equiv 1\pmod{4},  
\end{equation}
which completes the proof of part (2) of the lemma. 
\end{proof}
 As
a result,  when $p\not\equiv 3\pmod{4}$, for every  maximal $O_F$-order
  $\calO_1\subset D$ there exists some member
  $[X_1, \lambda_1]\in \Lambda_1^\pp$ such that
  $\End(X_1)\simeq \calO_1$. A similar statement holds for every 
  $A$-order $\calO_{16}\subset D$ satisfying (\ref{eq:31}) and the genus
  $\Lambda_{16}^\pp$  when
  $p\equiv 1\pmod{4}$. In fact, the member $[X_r, \lambda_r]\in
  \Lambda_r^\pp$ with $r\in \{1, 16\}$ turns out to be unique
   by Lemmas~\ref{lem:bije-eichler-p1mod4} and \ref{lem:biject-O16} respectively.  Thus we have 


\begin{lem}\label{lem:Phi-bije-p1mod4}
  Suppose that either $p\equiv 1\pmod{4}$ and $r\in \{1, 16\}$ or
  $(p, r)=(2,1)$. The map $\Upsilon^\pp: \Lambda_r^\pp\to \Tp(\grO_r)$ is
  bijective. 
\end{lem}


 The situation is quite different when
 $p\equiv 3\pmod{4}$.  Combining the equations
 \eqref{eq:3xy}--\eqref{xeq:26x}, we have constructed a map 
\begin{equation}\label{eq:11xi}
\Xi:  \Tp(D)=\Tp(\grO_1)\to \SG(\grO_1)\simeq \grG(O_F), 
\end{equation}
whose neutral (resp.~non-neutral) fiber is $\Tp^+(D)$
(resp.~$\Tp^{-}(D)$). A maximal order $\bbO$ in
  $D$ is said to belong to the \emph{principal spinor genus} if
  $\dbr{\bbO}\in \Tp^+(D)$,   otherwise it is said to belong to the
  \emph{nonprincipal spinor genus}.
We will 
produce an explicit maximal order $\bbO_0$ in (\ref{eq:42}) such that
$\dbr{\bbO_0}\in \Tp^+(D)$.  Thus both $\Tp^+(D)$ and $\Tp^{-}(D)$ are
characterized purely in terms of quaternion arithmetic.

%

\begin{sect}
For the moment let $p$ be an arbitrary prime.   Let $(X, \lambda_X)=\Res_{\F_{p^2}/\Fp}(E, \lambda_E)$ be as in
  \S\ref{sect:ppav-nonempty}, where  $E/\F_{p^2}$ is an  elliptic curve with
  $\pi_{E}=p$. By functoriality, $\End_{\F_{p^2}}(E)\otimes \Z[\pi_X]$ acts on
$X$. From \cite[Remark 4]{Diem-Naumann-2003}, this gives rise to an
identification
\begin{equation}
  \label{eq:46}
\End_{\F_p}^0(X)=\End_{\F_{p^2}}^0(E)\otimes \Q(\pi_X)
\end{equation}
such that 
\begin{equation}
  \label{eq:endo}
\End_{\F_p}(X)\otimes   \Z[{1}/{p}]=  \End_{\F_{p^2}}(E)\otimes \Z[{1}/{p}][\pi_X]. 
\end{equation}
In other words, $\End_{\F_p}(X)$ and $\End_{\F_{p^2}}(E)\otimes
\Z[\pi_X]$ differs at most at  $p$. 
If $p\equiv 1\pmod 4$, then $\Z[1/p][\pi_X]\otimes_{\Z[1/p]}
  \Z_2\simeq A_2$, and we find that 
\[ \End_{\F_p}(X)\otimes_\Z \Z_2 \simeq \End_{\F_{p^2}}(E)\otimes_\Z
  A_2\simeq \Mat_2(\Z_2)\otimes_{\Z_2}A_2\simeq \Mat_2(A_2). \]
  It follows from (\ref{eq:31}) that $[X, \lambda_X]\in
  \Lambda_{16}^\pp$ in this case.  





When written down explicitly, the identification in (\ref{eq:46}) is
just $D=D_{p, \infty}\otimes F$, where  $D_{p, \infty}$ is the unique
quaternion $\Q$-algebra ramified precisely at $p$ and
$\infty$. From
\cite[Theorem~4.2]{waterhouse:thesis}, $\bbo\coloneqq \End_{\F_{p^2}}(E)$ is a
maximal $\Z$-order in $\End_{\F_{p^2}}^0(E)=D_{p, \infty}$, and
conversely by \cite[Theorem~3.13]{waterhouse:thesis},  every
maximal $\Z$-order in $D_{p, \infty}$ occurs as the endomorphism ring
of some elliptic curve in the isogeny class of $E/\F_{p^2}$. 
According to \cite[Lemma~2.11]{li-xue-yu:unit-gp}, there is a \emph{unique}
$A$-order $\calM(\bbo)$ in $D$ properly containing $\bbo\otimes A$ such that
$  \calM(\bbo)\otimes \Z_\ell =\bbo\otimes A_\ell$ at every prime
$\ell\neq p$. Such an $A$-order $\calM(\bbo)$ is necessarily maximal
at $p$, i.e. $\calM(\bbo)\otimes \Z_p\simeq
\Mat_2(O_{F_p})$. In particular, $\calM(\bbo)$ is a maximal $O_F$-order in $D$ when
$p\not\equiv 1\pmod{4}$. 
On the other hand,
$\End_{\F_p}(X)$ is maximal at $p$ as described in
\S\ref{sect:end-ring}. Therefore, 
\begin{equation}
  \label{eq:48}
  \End_{\F_p}(X)=\calM(\bbo).
\end{equation}
 We work out an explicit example in the case $p\equiv 3\pmod{4}$
 below. 
\end{sect}

\begin{ex}\label{ex:res-p3mod4}
Suppose that $p\equiv 3\pmod{4}$.  According to
  \cite[Exercise~III.5.2]{vigneras}, $D_{p, \infty}$ can be presented
  as $\qalg{-1}{-p}{\Q}$,      and $\bbo_2\coloneqq \Z[i, (1+j_p)/2]$ is a maximal
  $\Z$-order in $\qalg{-1}{-p}{\Q}$. Here $\{1, i, j_p, k_p\}$ denotes the
  standard basis of $\qalg{-1}{-p}{\Q}$, where we write a subscript
  $_p$ to emphasize that $j_p^2=k_p^2=-p$. Then
  $D=D_{p, \infty}\otimes F=\qalg{-1}{-1}{F}$, which has a new standard
  basis $\{1, i, j, k\}$ by putting $j\coloneqq j_p/\sqrt{p}$ and
  $k\coloneqq k_p/\sqrt{p}$.  According to
  \cite[Proposition~5.7]{li-xue-yu:unit-gp}, the following is a
  maximal $O_F$-order\footnote{This order in denoted by $\bbO_8$ in \cite{li-xue-yu:unit-gp} since
    $\abs{\bbO_0^\times/O_F^\times}=\abs{\bfD_4}=8$. 
    Unfortunately,  we used the same 
    notation $\bbO_8$ in
    \cite{xue-yang-yu:ECNF, xue-yu:type_no} for the
    $A$-order that is currently denoted as $\calO_8$. To make a distinction, throughout this
    paper the letter $\bbO$ is
    reserved for a maximal $O_F$-order, and $\bbO_0$ is reserved for
    the maximal order in \eqref{eq:42}. } in $D$:
  \begin{equation}
    \label{eq:42}
    \bbO_0\coloneqq O_F+O_Fi+O_F\frac{\sqrt{p}+j}{2}+O_F\frac{\sqrt{p}i+k}{2}\subset
\qalg{-1}{-1}{F}. 
  \end{equation}
  Clearly, $\bbO_0\supset
\bbo_2\otimes O_F$, so $\bbO_0=\calM(\bbo_2)$ and $\dbr{\bbO_0}\in \Tp^+(D)$. 
  In fact, $\bbO_0$ is the unique maximal $O_F$-order in $D$
  up to conjugation satisfying
  $\bbO_0^\times/O_F^\times\simeq \mathbf{D}_4$ (resp.~$\bfD_{12}$) if
  $p\geq 7$ (resp.~$p=3$). Here $\bfD_n$ denotes the dihedral group of
  order $2n$. 
\end{ex}




Combining Lemma~\ref{lem:spinor-genus} with
Example~\ref{ex:res-p3mod4}, we obtain the following result. 
\begin{lem}\label{lem:end-ring-p3mod4}
  Suppose that $p\equiv 3\pmod{4}$. For any maximal $O_F$-order
  $\bbO\subset D$, there exists
  $[X, \lambda]\in \ppav(\sqrt{p})$ such that $\End(X)\simeq
  \bbO$ if and only if $\bbO$ belongs to the same spinor genus as
  $\bbO_0$. 
\end{lem}

\begin{proof}[Proofs of Theorem~\ref{thm:main} and Theorem~\ref{thm:type-num}]
For any $A$-order $\calO$ in $D$, we put
\begin{equation}
  \label{eq:60}
  h^1(\calO)\coloneqq \abs{D^1\bsh \whD^1/\wcO^1}, 
\end{equation}
which depends only on the spinor genus of $\calO$. 
Let $t(\calO)=\abs{\Tp(\calO)}$ be the type number of $\calO$, and
$t(\Lambda_r^\pp)$ be the cardinality of 
$\Upsilon^\pp(\Lambda_r^\pp)$ in (\ref{eq:68}).

From (\ref{eq:75}), Proposition~\ref{P.4},   and Lemmas~\ref{lem:Phi-bije-p1mod4}
and \ref{lem:end-ring-p3mod4}, we have   
\begin{equation}
  \label{eq:61}
h^\pp(\sqrt{p})\coloneqq \abs{\ppav(\sqrt{p})}=
  \begin{cases}
        h^1(\calO_1) \qquad &\text{if } p=2;\\
    h^1(\calO_1)+h^1(\calO_{16}) \qquad &\text{if } p\equiv 1\pmod{4};\\
    h^1(\bbO_0)    \qquad &\text{if } p\equiv 3\pmod{4}.
  \end{cases}
\end{equation}
Here $\calO_1\subset D$ is an arbitrary maximal $O_F$-order,
$\calO_{16}\subset D$ is an arbitrary $A$-order satisfying
(\ref{eq:31}), and $\bbO_0$ is the maximal $O_F$-order in  (\ref{eq:42}).

First, suppose that $p=2$. From Lemma~\ref{lem:Phi-bije-p1mod4}
and \cite[(4.7)]{xue-yu:type_no}, we obtain
\begin{equation}
  \label{eq:76}
  h^\pp(\sqrt{2})=t^\pp(\sqrt{2})=h^1(\calO_1)=t(\calO_1)=1.
\end{equation}

Next, suppose that $p\equiv 1\pmod{4}$. If $p=5$ then it follows from
Lemma~\ref{lem:Phi-bije-p1mod4} and
\cite[(4.7) and (4.8)]{xue-yu:type_no} that 
\begin{align}
\label{eq:99}
\abs{\Lambda_1^\pp}&=t(\Lambda_1^\pp)=h^1(\calO_1)=t(\calO_1)=1,\\ 
  \abs{\Lambda_{16}^\pp}&=t(\Lambda_{16}^\pp)=h^1(\calO_{16})=t(\calO_{16})=1. \label{zeq:104}
\end{align}
Similarly, if $p\geq 13$ and $p\equiv 1\pmod{4}$, then according to
\cite[(4.10) and (4.12)]{xue-yu:type_no}: 
\begin{align}
  \label{eq:77}
\abs{\Lambda_1^\pp}&=t(\Lambda_1^\pp)=h^1(\calO_1)=t(\calO_1)=\frac{\zeta_F(-1)}{2}+\frac{h(-p)}{8}+\frac{h(-3p)}{6},\\ \
  \begin{split}\label{eq:92}
\abs{\Lambda_{16}^\pp}&=t(\Lambda_{16}^\pp)=h^1(\calO_{16})=t(\calO_{16})\\&=\left(4-\Lsymb{2}{p}\right)\zeta_F(-1)+\frac{h(-p)}{4}+\left(2+\Lsymb{2}{p}\right)\frac{h(-3p)}{6}.    
  \end{split}
\end{align}
The
formula for $h^\pp(\sqrt{p})$, which is necessarily identical to that
of $t^\pp(\sqrt{p})$ in this case, is  obtained by summing up  the formulas for $\Lambda_1^\pp$ and $\Lambda_{16}^\pp$.

Lastly, according to
Proposition~\ref{prop:p=3mod4}, we have
\[h^\pp(\sqrt{3})=h^1(\bbO_0)=1,\qquad
  t^\pp(\sqrt{3})=\abs{\Tp^+(D)}=1\qquad \text{if}\quad p=3.\]
Moreover, if $p\geq 7$ and $p\equiv 3\pmod{4}$, then 
\begin{align*}
  h^\pp(\sqrt{p})&=h^1(\bbO_0)=
  \frac{\zeta_F(-1)}{2}+\left(11-3\Lsymb{2}{p}\right)\frac{h(-p)}{8}+\frac{h(-3p)}{6}\\
    t^\pp(\sqrt{p})&=\abs{\Tp^+(D)}=\frac{\zeta_F(-1)}{4}+\left(17-\Lsymb{2}{p}\right)\frac{h(-p)}{16}+\frac{h(-2p)}{8}+\frac{h(-3p)}{12}. 
\end{align*}
The theorems are proved. 
\end{proof}

\section{$(P,P_+)$-polarized superspecial abelian surfaces}
\label{sec:PP}
\numberwithin{thmcounter}{section}

In this section, we study in details the abelian surfaces in the simple $\F_p$-isogeny
class corresponding to $\pi=\sqrt{p}$ equipped with the \emph{polarization modules}
(\cite{Rapoport-thesis, Deligne-Pappas-1994}, \cite[\S
X.1]{van-der-Geer-HMS}). This allows us to generalize
the results of \S\ref{sec:computing-ppav}--\ref{sec:S4} to
nonprincipally polarized abelian surfaces. 



\subsection{$(P,P_+)$-polarized abelian varieties}
\label{sec:PP.1}
For the moment  let $F$ be a totally real number field, and $A$ be a
$\Z$-order in $F$.  Let $P$ be a finitely generated torsion free
$A$-module of rank one, i.e.~$P\otimes_A F\simeq F$ so that $P$ is
isomorphic to a fractional $A$-ideal in $F$. We say $P$ is a
\emph{proper} $A$-module if $\End_A(P)=A$, i.e.~$A=\{a\in
F\mid aP\subseteq P\}$. 
There is a canonical $\R$-algebra isomorphism $F\otimes_\Q\R\simeq
\R^{[F:\Q]}$, so $\R^{[F:\Q]}$ is naturally a free  $F\otimes_\Q
\R$-module of rank one. 
A \emph{notion of positivity} on 
 $P$ means an $A\otimes_\Z\R$-isomorphism $P\otimes_\Z \R\simeq
\R^{[F:\Q]}$, and we denote by $P_+$ the pre-image of the set of totally positive
elements $\R_{+}^{[F:\Q]}$.  Clearly, $P_+$ is closed under $A_+$-linear combinations,
where $A_+\subset F_+$ denote the subsets of totally positive
elements in $A$ and $F$ respectively.  Denote by 
\[ 
\begin{split}
\Pic_+(A)= & \text{the set of isomorphism classes
of \emph{invertible} $A$-modules with} \\
&\text{a notion of
positivity.}  
\end{split}
\]
Each invertible fractional $A$-ideal $\gra$ has a 
canonical notion of positivity from the $\R$-algebra isomorphism
$F\otimes_\Q\R\simeq \R^{[F:\Q]}$.  The map $\gra \mapsto (\gra, \gra_+)$
induces a bijection between the narrow class group of $A$ and $\Pic_+(A)$, which equips
$\Pic_+(A)$ with a canonical abelian group  structure. From now on we make no
distinction between the narrow class group of $A$ and $\Pic_+(A)$ in
the above sense.


Fix a base field $k$. By definition, an $F$-abelian variety over
$k$ is a pair $(X,\iota)$, 
where $X$ is an abelian variety over $k$
and $\iota:F\to \End^0(X)$ is a ring
homomorphism. We shall assume that $(X,\iota)$ satisfies the condition
$\dim X=[F:\Q]$. 
For an $F$-abelian variety 
$\ul X=(X,\iota)$, we put
$A\coloneqq \iota^{-1}(\End(X))\subset F$ and define
\begin{equation}
  \label{eq:51}
  \begin{split}
   \calP(\ul X)&\coloneqq  \Hom_{A}(X,X^t)^{\rm sym}=\text{the N\'eron-Severi
  group of $\ul X=(X,\iota)$,}\\ 
  \calP_+(\ul X)& \coloneqq \text{the subset of $A$-linear
  polarizations on $X$ (in $\calP(\ul X)$)}.
  \end{split}
\end{equation}
By \cite[Propositions 1.12 and 1.18]{Rapoport-thesis} 
that $\calP(\ul X)$ is a finite torsion-free $A$-module of rank one
equipped with notion of positivity $\calP_+(\ul X)$. The pair
$(\calP(\ul X),\calP_+(\ul X))$ is called the ($F$-linear)
\emph{polarization module} of $\ul X$.  In general, $\calP(\ul X)$ need not
to be a proper $A$-module nor a projective $A$-module.  Nevertheless,
any rank one proper module over a Gorenstein order is projective by
\cite[Characterization B 4.2]{Gorenstein-orders-JPAA-2015}. In
particular, if $A$ is Bass 
(see \cite[\S37]{curtis-reiner:1} and \cite{Levy-Wiegand-1985} for the definition and properties of Bass orders), i.e.~any order in $F$ containing $A$ is
Gorenstein, then $\calP(\ul X)$ is a projective $\End_A(\calP(\ul X))$-module. 

In general, let $R$ be an order in $F$, and $(P, P_+)$ be an
invertible $R$-module with a notion of positivity. 
A \emph{$(P,P_+)$-polarized $F$-abelian variety} over $k$ 
is a triple $(X,\iota,\xi)$, where 
\begin{itemize}
\item $\ul X=(X,\iota)$ is an $F$-abelian variety over $k$ with $\dim X=[F:\Q]$, and
\item $\xi:(P,P_+)\isoto (\calP(\ul X),\calP_+(\ul X))$ is an
  isomorphism of $R$-modules with notion of positivity.
\end{itemize}
The isomorphism $\xi$  will be called a
$(P, P_+)$-\emph{parametrization} of $(\calP(X),\calP_+(X))$. Two
$(P, P_+)$-polarized abelian varieties  $(X_i, \iota_i, \xi_i)/k$ for $i=1,2$  are isomorphic if there
exists a $k$-isomorphism $\alpha: X_1\to X_2$ such that
$\alpha\iota_1(a)=\iota_2(a)\alpha$ for every $a\in F$ and 
$\alpha^*\xi_2(b)=\xi_1(b)$ for every $b\in P$.

\subsection{$(P,P_+)$-polarized superspecial abelian surfaces.}
\label{sec:PP.2}

Let $F=\Q(\sqrt{p})$, $A=\Z[\sqrt{p}]$ and $D=D_{\infty_1, \infty_2}$.  For any abelian surface $X/\F_p$ with
$\pi_X^2=p$, there is a canonical embedding $\iota:
F\to \End^0(X)$ sending $\sqrt{p}$ to $\pi_X$ making $(X,
\iota)$ an $F$-abelian variety.  For simplicity, 
we omit $\iota$  from the notation. 
The polarization module $(\calP(X),\calP_+(X))$ of $X$ is defined as in
(\ref{eq:51}).  In the present case, $\calP(X)$ coincides with the
full 
N\'eron-Severi group of $X/\F_p$, and $\calP_+(X)$ coincides with the set
of polarizations.
Since any quadratic $\Z$-order is Bass \cite[\S2.3]{Levy-Wiegand-1985}, 
\[ \ul \calP (X)\coloneqq (\calP(X),\calP_+(X)) \]
represents an element in $\Pic_+(R)$
with $R=A$ or $O_F$. 
The association $X\mapsto  \ul\calP(X)$ induces a map
\begin{equation}
  \label{eq:V.4}
 \ul \calP: \Isog(\sqrt{p}) \to  \Pic_+(A)\amalg \Pic_+(O_F).    
\end{equation}
When $p\not \equiv 1 \pmod 4$, the map $\ul \calP$ sends
$\Isog(\sqrt{p})$ to $\Pic_+(O_F)$. When $p\equiv 1 \pmod 4$, the
set $\Isog(\sqrt{p})$ is the union
$\Lambda^\un_1 \amalg \Lambda^\un_{8} \amalg \Lambda^\un_{16}$. We claim that  \[ \ul \calP(\Lambda^\un_1)\subseteq \Pic_+(O_F),\quad
\ul \calP(\Lambda^\un_8)\subseteq \Pic_+(O_F),\quad \text{and}\quad 
\ul \calP(\Lambda^\un_{16})\subseteq \Pic_+(A). \] 
  Indeed, the first inclusion is obvious, and the middle one follows directly from  (\ref{eq:36}), so only the last one needs a proof.  Nevertheless, for later applications, we treat the cases $\Lambda_1^\un$ (for all $p$) and $\Lambda_{16}^\un$ (for $p\equiv 1\pmod{4}$) uniformly. Let $R=O_F$ if $r=1$ and $R=A$ if $r=16$.  The
Tate module $T_\ell(X)$ of any member $[X]\in \Lambda_r^\pp$ with $r\in \{1, 16\}$ is a \emph{free} $R_\ell$-module
of rank $2$  for every prime $\ell$ (including $\ell=p$,
see~\S\ref{sec:dieu-prime-p}). Let
$\Gamma_{\F_p}=\Gal(\bar{\F}_p/\F_p)$ and
$\calP(X)_\ell=\calP(X)\otimes \Z_\ell$. We have
\begin{equation}
  \label{eq:54}
  \begin{split}
  \calP(X)_\ell& =\{\,\lambda\in \Hom_{\Z_\ell[\Gamma_{\F_p}]}(T_\ell(X),T_\ell(X^t))\,|\, \lambda^t=-\lambda\,\} \\ 
  &=\left\{\psi_\ell\in \Alt_{\Z_\ell} (
  T_\ell(X)\otimes T_\ell(X), \Z_\ell)\middle\vert
  \begin{aligned}
&\psi_\ell(ax, y)=\psi_\ell(x,ay)\\
&\forall a\in R_\ell \text{ and } x, y\in T_\ell(X)  
  \end{aligned} \right\}\\
  & =\Hom_{R_\ell} (\wedge^2_{R_\ell} T_\ell(X), R^\vee_\ell) \\& =(\wedge^2_{R_\ell} T_\ell(X))^*\otimes_{R_\ell}  R^\vee_\ell, 
  \end{split}  
\end{equation}
where ${R}^\vee_\ell\subset F_\ell$ is the dual lattice of $R_\ell$
with respect to the  trace $\Tr_{F/\Q}$, and $(\cdot)^*$ denotes the
$R_\ell$-linear dual. Since $R$ is Gorenstein, $R_\ell^\vee$ is a free $R_\ell$-module of rank one by \cite[Proposition~3.5]{Gorenstein-orders-JPAA-2015} (which also follows immediately from direct calculations).  Therefore, $\calP(X)$ is an invertible $R$-module, and our claim is verified.


From now on, whenever we write $r\in \{1, 8, 16\}$ and $[P, P_+]\in
\Pic_+(R_r)$, we assume that one of the following conditions holds:
\begin{equation}
  \label{eq:513}
  \begin{split}
  &(a)\ r=1 \ \text{ and }\ R_1=O_F, \\
  &(b)\ r=8,\  p\equiv 1\!\pmod 4,  \text{ and }\ R_8=O_F
  \text{\ or} \\   
  &(c)\  r=16,\ p\equiv 1\!\pmod 4, \text{ and }\ R_{16}=A.
  \end{split}
\end{equation}
Observe that for   all $r\in \{1,8, 16\}$ and  $\calO_r$ in \S\ref{sect:end-ring}, we have
\begin{equation}
  \label{eq:116}
  \Nr(\wcO_r^\times)=\wh{R}_r^\times.
\end{equation}
Let us define
\begin{equation}
  \label{eq:80}
 \Lambda_r^\un[P,P_+]\coloneqq \{ [X]\in  \Lambda_r^\un\mid \exists \,\xi: (P,P_+)\xrightarrow{\simeq}
(\calP(X),\calP_+(X))\}. 
\end{equation}
 In other words, $\Lambda_r^\un[P,P_+]$ is the
fiber of the element $[P,P_+]\in \Pic_+(R_r)$ under the map $\ul \calP:
\Lambda_r^\un\to \Pic_+(R_r)$.
Fix a representative $(P, P_+)$ for the class $[P, P_+]\in
\Pic_+(R_r)$.  Let $\Lambda_r^\pmm(P,P_+)$ denote\footnote{Here we use
the notation $\Lambda_r^\pmm(P,P_+)$ instead of
$\Lambda_r^\pmm[P,P_+]$ since the parametrization $\xi:(P,P_+)\isoto
(\calP(\ul X),\calP_+(\ul X))$ depends on $(P, P_+)$ itself,
not just on its isomorphism class $[P, P_+]\in \Pic_+(R_r)$. If $\tau: (P, P_+)\to (P', P_+')$  is an isomorphism of invertible
$R_r$-modules with notion of positivity, then $\tau$ induces a
bijection $\Lambda_r^\pmm(P,P_+)\to \Lambda_r^\pmm(P',P'_+)$ sending
each $[X, \xi]$ to $[X, \xi\tau^{-1}]$. 
} the set of
isomorphism classes of $(P,P_+)$-polarized abelian surfaces
 $(X,\xi)/\F_p$ with $[X]\in \Lambda_r^\un[P, P_+]$.
There is  a surjective forgetful map 
\begin{equation}
  \label{eq:511}
  f: \Lambda_r^\pmm(P,P_+)\twoheadrightarrow \Lambda_r^\un[P,P_+], \quad [X,\xi]\mapsto [X].
\end{equation}



Our goal is to give adelic double coset descriptions for both
$\Lambda_r^\pmm(P,P_+)$ and $\Lambda^\un_r[P,P_+]$ and compute their
class numbers and type numbers for each $r\in \{1, 8, 16\}$ and each $[P, P_+]\in \Pic_+(R_r)$.
%



Fix an abelian surface $X_0/\F_p$  with $\pi_{X_0}^2=p$. Recall that $\Qisog(X_0)$ denotes the set of
equivalence classes of $\Q$-isogenies to $X_0$ as in  \S\ref{sec:method-cal}. 
For each member $(X, \varphi)\in \Qisog(X_0)$, we
realize $\calP(X)$ as an $A$-submodule of
$\calP^0(X_0)\coloneqq \calP(X_0)\otimes \Q$ by pushing forward along
$\varphi$: 
\begin{equation}
\varphi_*\lambda\coloneqq (\varphi^t)^{-1} \lambda\varphi^{-1}\in
\calP^0(X_0), \qquad \forall \lambda\in \calP(X). 
\end{equation}
Since push-forwards and pull-backs preserve $\Q$-polarizations,
\begin{equation}
  \label{eq:81}
\varphi_*\calP_+(X)=  \varphi_*\calP(X)\cap \calP_+^0(X_0),  
\end{equation}
where $\calP_+^0(X_0)$ denotes the subset of $\calP^0(X_0)$
consisting of all $\Q$-polarizations (see \S\ref{sect:neron-severi}). 
For each order  $R\supseteq A$ in $F$,  the finite idele group $\whF^\times$ acts transitively
on the set of invertible $R$-modules inside $\calP^0(X_0)$  by 
\[ a\cdot P\coloneqq \calP^0(X_0)\cap \prod_\ell a_\ell P_\ell, \quad \forall a=(a_\ell)_\ell\in
  \whF^\times. \]
Each such invertible $R$-module is equipped with the canonical notion of
positivity induced from $\calP^0_+(X_0)$.




  

Let $G$ (resp.~$G^1$) be the algebraic $\Q$-groups $\ul D^\times$
(resp.~$\ul D^1$) as before. Recall that there is an action of $\whD^\times=G(\whQ)$ on the set
$\Qisog(X_0)$, whose orbit containing the base point $(X_0, \id_0)$ is
denoted by $\scrG_G(X_0, \id_0)$ (or $\scrG_G(X_0)$ for brevity) and called the $G$-genus of $(X_0, \id_0)$ in $\Qisog(X_0)$ (see
Definition~\ref{defn:adelic-action-on-Qisog}).  The set of isomorphism
classes of underlying abelian varieties for $\scrG_G(X_0)$ is denoted by
$\Lambda^\un(X_0)$. Thus $\Lambda^\un(X_0)=\Lambda_1^\un$ if
$p\not\equiv 1\pmod{4}$, and $\Lambda^\un(X_0)=\Lambda_r^\un$ for some
$r\in \{1, 8, 16\}$ if $p\equiv 1\pmod{4}$.

\begin{prop}\label{prop:adel-polar-module}
Let $(X, \varphi)\in \Qisog(X_0)$ be
a member in the same 
$G$-genus of $(X_0, \id_0)$ 
so that there exists $g\in \whD^\times$ such that $(X,
\varphi)=g(X_0, \id_0)$. Then
$\varphi_*\calP(X)=\Nr(g)^{-1}\calP(X_0)$.  In particular, there is a
commutative diagram
\begin{equation}
  \label{eq:510}
  \begin{tikzcd}
  D^\times\backslash \whD^\times/\wh \calO_0^\times\ar[r, leftrightarrow, "\simeq" ]\ar[d, "\Nr^{-1}"]  &
  \Lambda^\un(X_0)\ar[d, "\ul\calP"] \\
  F^\times_+\backslash \whF^\times/\Nr(\wh \calO_0^\times)\ar[r,
  leftrightarrow, "\simeq"] &
  \Pic_{+}(R)
  \end{tikzcd}
\end{equation}
where $\calO_0\coloneqq \End(X_0)$ and
$R\coloneqq \End_{A}(\calP(X_0))$. Moreover,  the lower horizontal
bijection\footnote{It should be emphasized that this bijection is not
  the canonical group isomorphism as in \eqref{eq:14}. Rather it is
  the composition of this group isomorphism together with
  the translation by the narrow class of $\ul\calP(X_0)$. } 
is induced from sending each $a\in \whF^\times$ to the  $R$-module
$a\cdot \calP(X_0)$ with the induced notion of positivity  from $\calP^0_+(X_0)$. 
\end{prop}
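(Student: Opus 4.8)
The plan is to compute the effect of the $\whD^\times$-action on polarization modules by working prime-by-prime via Tate modules (and the Dieudonné module at $p$), then package the result into the commutative diagram. First I would fix $g=(g_\ell)\in\whD^\times$ with $(X,\varphi)=g(X_0,\id_0)$, so that by \eqref{eq:16} the $\Q$-isogeny $\varphi$ identifies $T_\ell(X)$ with $g_\ell T_\ell(X_0)$ inside $V_\ell$ for each $\ell\ne p$, and likewise identifies $M(X)$ with $g_p M(X_0)$ inside $V_p$ (recalling from Remark~\ref{rem:dieu-prime-p} that at $p$ the Dieudonné module behaves exactly like a Tate module since $A_p=O_{F_p}$). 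The key local computation is then: given the formula \eqref{eq:54} expressing $\calP(X)_\ell=(\wedge^2_{R_\ell}T_\ell(X))^*\otimes_{R_\ell}R_\ell^\vee$, replacing $T_\ell(X)$ by $g_\ell T_\ell(X_0)$ scales $\wedge^2_{R_\ell}T_\ell(X)$ by $\det(g_\ell)=\Nr(g_\ell)\in F_\ell^\times$ (the reduced norm, since $D_\ell\simeq\Mat_2(F_\ell)$), hence scales the $R_\ell$-linear dual by $\Nr(g_\ell)^{-1}$. Tracing through the push-forward identification $\varphi_*$, this gives $\varphi_*\calP(X)_\ell=\Nr(g_\ell)^{-1}\calP(X_0)_\ell$ for every $\ell$, and assembling over all primes yields $\varphi_*\calP(X)=\Nr(g)^{-1}\calP(X_0)$ as submodules of $\calP^0(X_0)$.

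Second, I would check compatibility with the notions of positivity. Both $\calP(X)$ and $\calP(X_0)$ carry notions of positivity induced from $\calP_+^0(X_0)$ via \eqref{eq:81}, and since $\varphi_*$ preserves $\Q$-polarizations (as noted just before \eqref{eq:81}), the scaling by $\Nr(g)^{-1}\in\whF^\times$ intertwines these; this is precisely the $\whF^\times$-action $a\cdot P:=\calP^0(X_0)\cap\prod_\ell a_\ell P_\ell$ described in the text, with $a=\Nr(g)^{-1}$. So $\ul\calP(X,\varphi)=\Nr(g)^{-1}\cdot\ul\calP(X_0)$ in $\Pic_+(R)$, which is exactly the commutativity of the square in \eqref{eq:510}: the left vertical map is $g\mapsto\Nr(g)^{-1}$, the right vertical is $\ul\calP$, the top is the bijection $\Lambda^\un(X_0)\simeq D^\times\backslash\whD^\times/\wh\calO_0^\times$ from \eqref{eq:32}, and the bottom is the orbit map $a\mapsto a\cdot\calP(X_0)$.

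Third, I would verify that the bottom row $F_+^\times\backslash\whF^\times/\Nr(\wh\calO_0^\times)\isoto\Pic_+(R)$ is a well-defined bijection. The $\whF^\times$-action on invertible $R$-modules inside $\calP^0(X_0)$ is transitive with stabilizer $\wh R^\times$ at the base point $\calP(X_0)$ (standard, since $R$ is a quadratic order and $\calP(X_0)$ is $R$-invertible), giving $\whF^\times/\wh R^\times\isoto\Pic(R)$-with-positivity-marking; quotienting on the left by $F_+^\times$ (the totally positive principal ideles, which act trivially on the positivity structure) gives $\Pic_+(R)$ as recalled in \S\ref{sec:PP.1}. By \eqref{eq:116} we have $\Nr(\wh\calO_0^\times)=\wh R^\times$ (with $R=O_F$ when $\calO_0$ is maximal at $2$, i.e. $r\in\{1,8\}$, and $R=A$ when $r=16$), so the denominators match and the induced maps descend consistently. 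This identification of $\Pic_+(R)$ with the double coset is essentially the bijection $\Cl_+(R)\isoto\Pic_+(R)$ from \S\ref{sec:PP.1} combined with the idelic description of $\Cl_+(R)$.

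\textbf{The main obstacle} I anticipate is the prime $p$: one must confirm that the polarization-module formula \eqref{eq:54}, derived there for Tate modules at $\ell\ne p$, applies verbatim at $p$ to the Dieudonné module. The point is that the quasipolarization $\psi_p$ on $V_p$ satisfies $\psi_p(ax,y)=\psi_p(x,\bar a y)=\psi_p(x,ay)$ (as $\bar a=a$ on $F$) and, by Remark~\ref{rem:dieu-prime-p}, condition \eqref{eq:62} collapses to the same bilinearity as \eqref{eq:22}; so a self-dual Dieudonné module is literally an $O_{F_p}$-lattice with an alternating $F_p$-valued (up to trace) pairing, and the computation of $\calP(X)_p$ as $(\wedge^2_{R_p}M(X))^*\otimes R_p^\vee$ goes through identically. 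Once this is in hand the rest is bookkeeping; I would present the local scaling computation carefully and treat the positivity compatibility and the bottom-row bijection more briefly, citing \S\ref{sec:PP.1} and \eqref{eq:116}.
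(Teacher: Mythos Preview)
Your approach matches the paper's almost exactly for the cases $r\in\{1,16\}$: the local computation via \eqref{eq:54}, the scaling of $\wedge^2_{R_\ell}T_\ell(X)$ by $\Nr(g_\ell)$, and the passage to the commutative diagram are precisely what the paper does in \eqref{eq:57}--\eqref{eq:58}. Your treatment of the prime $p$ via Remark~\ref{rem:dieu-prime-p} is also correct and agrees with the paper's implicit handling.

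There is one genuine gap: the case $[X_0]\in\Lambda_8^\un$. The derivation of \eqref{eq:54} in the paper uses that $T_\ell(X)$ is a \emph{free} $R_\ell$-module of rank $2$, and this is stated there only for $r\in\{1,16\}$. When $r=8$ one has $T_2(X_0)\simeq A_2\oplus O_{F_2}$, which is free over neither $A_2$ nor $O_{F_2}$, so the wedge-square formula $\calP(X)_2=(\wedge^2_{R_2}T_2(X))^*\otimes R_2^\vee$ is not directly available and your scaling-by-$\Nr(g_2)$ step has no foundation at $\ell=2$. The paper does not attempt to salvage \eqref{eq:54} in this case; instead it reduces $r=8$ to $r=1$ via the minimal isogeny $\eta:X\to\wt X$ of Remark~\ref{36}(2), using the identification $\eta^*:(\calP(\wt X),\calP_+(\wt X))\cong(\calP(X),\calP_+(X))$ of \eqref{eq:36} together with a $\whD^\times$-equivariant map $\scrG_G(X_0)\to\scrG_G(\wt X_0)$. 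You should add this reduction (or an equivalent direct argument at $\ell=2$) to cover the $r=8$ case; your third paragraph already acknowledges that $R=O_F$ when $r=8$, but the main identity $\varphi_*\calP(X)=\Nr(g)^{-1}\calP(X_0)$ still needs proof there.
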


\begin{proof}
First, suppose that $[X_0]\in \Lambda_r^\un$ with $r\in \{1,
16\}$. Correspondingly,  $R=O_F$ if $r=1$ and $R=A$ if
 $r=16$. Write $(X, \varphi)=g(X_0, \id_0)$ for $g=(g_\ell)_\ell\in
 \whD^\times$.  From Definition~\ref{defn:adelic-action-on-Qisog},
 $\varphi_*T_\ell(X)=g_\ell T_\ell(X_0)$ for every prime $\ell$. 
Plugging
in  (\ref{eq:54}), we obtain
\begin{equation}
  \label{eq:58}
  \begin{split}
   \varphi_* \calP(X)_\ell
   &=(\wedge^2_{R_\ell}\varphi_*T_\ell(X))^*\otimes_{R_\ell} 
     {R}^\vee_\ell \\
   &=(\wedge^2_{R_\ell} g_\ell T_\ell(X_0))^*\otimes_{R_\ell} 
     { R}^\vee_\ell \\
   &=\Nr(g_\ell)^{-1} (\wedge^2_{R_\ell}  T_\ell(X_0))^*
     \otimes_{R_\ell} { R}^\vee_\ell \\
   &=\Nr(g_\ell)^{-1} \calP(X_0)_\ell. 
  \end{split}
\end{equation}
This proves that $\varphi_*\calP(X)=\Nr(g)^{-1}\calP(X_0)$, and the
commutative diagram follows easily from \eqref{eq:116} and \eqref{eq:58}. 

Next we treat the case $[X_0]\in \Lambda_8^\un$, so $R=O_F$ in
this case.  For each
$(X, \varphi)\in \scrG_G(X_0)$, let $\eta: X\to \wt X$ be the minimal
isogeny constructed in Remark~\ref{rem:8-pol-mod}. There is a 
 $\whD^\times$-equivariant map:
\begin{equation}
  \label{eq:82}
 \scrG_G(X_0)\to \scrG_G(\wt X_0), \qquad (X, \varphi) \to (\wt X,
  \wt\varphi\coloneqq \eta_0\varphi\eta^{-1}). 
\end{equation}
It follows that the surjective map $\Lambda_8^\un\to \Lambda_1^\un$
sending each $[X]\mapsto [\wt X]$ fites into a commutative diagram 
\begin{equation}  \label{eq:83}
  \begin{tikzcd}
    D^\times\backslash \whD^\times/\wh \calO_0^\times\ar[r,
    leftrightarrow, "\simeq" ]\ar[d, twoheadrightarrow] &
    \Lambda_8^\un\ar[d, twoheadrightarrow] \\
     D^\times\backslash \whD^\times/\wh \scrO_0^\times\ar[r, leftrightarrow, "\simeq" ] & \Lambda_1^\un 
  \end{tikzcd}
\end{equation}
where $\scrO_0\coloneqq \End(\wt X_0)$, and the left vertical map is  the
canonical projection.   From (\ref{eq:36}), the map $\ul\calP:
\Lambda_8^\un\to \Pic_+(O_F)$ factors through $\Lambda_8^\un\to
\Lambda_1^\un$. Thus the proposition for the case $r=8$ is reduced to
the $r=1$ case. 
\end{proof}
\begin{cor}
  \begin{enumerate}
  \item If $p\not\equiv 1 \pmod 4$, then  $\ul \calP(\Isog(\sqrt{p}))=\Pic_+(O_F)$. 
  \item If $p\equiv 1 \pmod 4$, then
    \begin{equation}
      \label{eq:56}
      \ul \calP(\Lambda^\un_r)=
      \begin{cases}
        \Pic_+(O_F) & \text{for $r=1,8$;} \\
        \Pic_+(A) & \text{for $r=16$.}
      \end{cases}
    \end{equation}
   \end{enumerate}  
\end{cor}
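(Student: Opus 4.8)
The plan is to deduce the corollary directly from Proposition~\ref{prop:adel-polar-module}, the containments $\ul\calP(\Lambda_r^\un)\subseteq\Pic_+(R_r)$ having already been established before that proposition. Fix an abelian surface $X_0/\F_p$ with $\pi_{X_0}^2=p$; write $\calO_0=\End_{\F_p}(X_0)$ and $R=\End_A(\calP(X_0))$. By the genus decomposition recalled in \S\ref{subsec:genera-isogsqrtp}, we may choose $X_0$ so that $\Lambda^\un(X_0)$ equals $\Isog(\sqrt p)=\Lambda_1^\un$ when $p\not\equiv 1\pmod 4$, and so that $\Lambda^\un(X_0)=\Lambda_r^\un$ for a prescribed $r\in\{1,8,16\}$ when $p\equiv 1\pmod 4$ (taking $\calO_0$ to be a maximal $O_F$-order, an order as in (\ref{eq:31}), or the order $\calO_8$, respectively); then $R=O_F$ for $r\in\{1,8\}$ and $R=A$ for $r=16$, as seen around (\ref{eq:54}) and (\ref{eq:36}). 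So it suffices to prove that in each case the right-hand vertical map $\ul\calP\colon\Lambda^\un(X_0)\to\Pic_+(R)$ of the commutative diagram (\ref{eq:510}) is surjective.

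Since the two horizontal arrows of (\ref{eq:510}) are bijections and the square commutes, $\ul\calP$ will be surjective as soon as the left-hand vertical map $g\mapsto\Nr(g)^{-1}$ is surjective as a map of double cosets $D^\times\backslash\whD^\times/\wh\calO_0^\times\to F^\times_+\backslash\whF^\times/\Nr(\wh\calO_0^\times)$. As inversion is a bijection of $\whF^\times$, this amounts to surjectivity of the map induced by the reduced norm $\Nr\colon\whD^\times\to\whF^\times$, which is already compatible with the relevant quotients ($\Nr(D^\times)\subseteq F^\times_+$ because $D=D_{\infty_1,\infty_2}$ is totally definite, and the right-hand denominator is $\Nr(\wh\calO_0^\times)$ by construction). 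Hence the whole matter reduces to surjectivity of $\Nr\colon\whD^\times\to\whF^\times$, which is a purely local fact: $D$ splits at every finite place $v$ of $F$, so $\Nr\colon D_v^\times\to F_v^\times$ is the determinant map and is surjective, and $\Nr(\calO_{0,v}^\times)=O_{F_v}^\times$ for all but finitely many $v$ (those at which $\calO_{0,v}\cong\Mat_2(O_{F_v})$); consequently any $a=(a_v)\in\whF^\times$ is the reduced norm of some $g=(g_v)\in\whD^\times$ with $g_v\in\calO_{0,v}^\times$ for almost all $v$.

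Finally, by (\ref{eq:116}) one has $\Nr(\wh\calO_0^\times)=\wh R^\times$ for $\calO_0\in\{\calO_1,\calO_8,\calO_{16}\}$, so the target $F^\times_+\backslash\whF^\times/\wh R^\times$ is exactly $\Cl_+(R)\cong\Pic_+(R)$, and specializing the surjectivity of $\ul\calP$ just obtained to each $r$ gives assertion (1) (with $R=O_F$) and assertion (2) (with $R=O_F$ for $r\in\{1,8\}$ and $R=A$ for $r=16$). I expect no real obstacle: the corollary is a formal consequence of Proposition~\ref{prop:adel-polar-module}, the one slightly delicate ingredient being the reduction of the $r=8$ case inside that proposition, which is already handled there through the minimal isogeny $\eta\colon X\to\wt X$ and the identification (\ref{eq:36}).
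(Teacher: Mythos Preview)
Your proof is correct and follows essentially the same route as the paper: the paper's own proof is the one-line observation that the corollary follows from the commutative diagram (\ref{eq:510}) because the left vertical map $\Nr^{-1}$ is surjective. You have simply unpacked this, supplying the local reason for surjectivity of $\Nr\colon\whD^\times\to\whF^\times$ (splitting of $D$ at all finite places) and the identification of the target via (\ref{eq:116}); there is one harmless slip in your parenthetical, where the order for $r=16$ should be $\calO_{16}$ rather than $\calO_8$.
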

\begin{proof}
  This follows directly from the commutative diagram (\ref{eq:510})
  since the left vertical map $\Nr^{-1}$ is surjective. 
\end{proof}

  \begin{cor}\label{cor:prin-polar-module}
  A member $[X]\in \Isog(\sqrt{p})$ 
  is  principal
   polarizable if and only if 
\[[X]\in \Lambda_1^\un[O_F, O_{F, +}]\amalg \Lambda_{16}^\un[A, A_+].\]
Moreover, there are canonical bijections:
  \begin{equation}
    \label{eq:V.5}
\Lambda_1^\pmm(O_F, O_{F,+})\cong     \Lambda_1^\pp, \qquad
     \Lambda_{16}^\pmm(A, A_+)\cong \Lambda_{16}^\pp. 
  \end{equation}
\end{cor}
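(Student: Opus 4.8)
The plan is to reduce both assertions to a single local statement about the polarization module together with the rigidity of the real multiplication in this isogeny class. The embedding $\iota\colon F\hookrightarrow\End^0(X)$ sends $\sqrt p$ to the Frobenius $\pi_X$, and since every morphism of abelian varieties over $\F_p$ commutes with Frobenius, every isomorphism of abelian surfaces in this isogeny class is automatically $F$-linear. Fixing $r\in\{1,16\}$ with $R_1=O_F$ and $R_{16}=A$ (only $r=1$ occurs when $p\not\equiv1\pmod 4$), the first step is to prove: for $[X]\in\Lambda_r^\un$, an element $\lambda\in\calP(X)$ is a principal polarization if and only if $\lambda\in\calP_+(X)$ and $\lambda$ generates $\calP(X)$ as an $R_r$-module.

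For this lemma, recall from \S\ref{sec:PP.2} (and Remark~\ref{rem:dieu-prime-p} at $\ell=p$) that for $[X]\in\Lambda_r^\un$ the Tate module $T_\ell(X)$ is free of rank $2$ over $R_{r,\ell}$ at every prime $\ell$. Hence $\wedge^2_{R_{r,\ell}}T_\ell(X)$ is free of rank one, so by \eqref{eq:54} the module $\calP(X)_\ell$ is identified, functorially in $\lambda$, with $R_{r,\ell}^\vee=\delta_\ell^{-1}R_{r,\ell}$, where $\delta_\ell$ generates the different of $R_{r,\ell}$ over $\Z_\ell$. Writing $\lambda=c\cdot\delta_\ell^{-1}$ with $c\in F_\ell$ (so that $\lambda\in\calP(X)_\ell$ iff $c\in R_{r,\ell}$), the $\Z_\ell$-valued alternating pairing on $T_\ell(X)\cong R_{r,\ell}^2$ attached to $\lambda$ is the composite of the standard symplectic $R_{r,\ell}$-form with $x\mapsto\Tr_{F_\ell/\Q_\ell}(c\,\delta_\ell^{-1}x)$; since $\delta_\ell^{-1}R_{r,\ell}=R_{r,\ell}^\vee$ the trace pairing of $R_{r,\ell}$ with $\delta_\ell^{-1}R_{r,\ell}$ is perfect, and one computes that the Gram determinant of this pairing is $\Nm_{F_\ell/\Q_\ell}(c)^2$ up to a unit of $\Z_\ell$. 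Thus $\lambda$ is principal at $\ell$ exactly when $c\in R_{r,\ell}^\times$, i.e.\ exactly when $\lambda$ generates $\calP(X)_\ell$; and since principality of a polarization and generation of the invertible $R_r$-module $\calP(X)$ are both local conditions, the lemma follows. (Alternatively it can be read off the degree formula for polarizations of Hilbert--Blumenthal abelian varieties in \cite{Rapoport-thesis}.)

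The characterization of principal polarizability is then immediate: for $[X]\in\Lambda_r^\un$, $X$ is principally polarizable iff $\calP(X)$ admits a generator lying in $\calP_+(X)$, iff $(\calP(X),\calP_+(X))\cong(R_r,R_{r,+})$ as $R_r$-modules with notion of positivity, iff $\ul\calP(X)=[R_r,R_{r,+}]$, iff $[X]\in\Lambda_r^\un[R_r,R_{r,+}]$. Since every principally polarizable member of $\Isog(\sqrt p)$ lies in $\Lambda_1^\un$, or in $\Lambda_1^\un\amalg\Lambda_{16}^\un$ when $p\equiv1\pmod 4$ (as $\Lambda_8^\pp=\emptyset$ by Proposition~\ref{P.4}), and these genera are disjoint by \eqref{eq:28}, the stated ``if and only if'' follows.

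For the bijections (with $r\in\{1,16\}$ and $(P,P_+)=(R_r,R_{r,+})$) I would use $[X,\xi]\mapsto[X,\xi(1)]$. It is well defined: $\xi(1)$ generates $\calP(X)$ and, since $1\in R_{r,+}$, lies in $\calP_+(X)$, so by the lemma it is a principal polarization, $[X]\in\Lambda_r^\un$, and $[X,\xi(1)]\in\Lambda_r^\pp$; moreover an isomorphism of $(R_r,R_{r,+})$-polarized surfaces is in particular an isomorphism of the underlying principally polarized surfaces. The inverse sends $[X,\lambda]\in\Lambda_r^\pp$ to $[X,\xi_\lambda]$ with $\xi_\lambda(b)=b\cdot\lambda$: here $[X]\in\Lambda_r^\un[R_r,R_{r,+}]$ by the previous step, so $\lambda$ generates $\calP(X)$ over $R_r$ and $\calP_+(X)=\calP(X)\cap\calP_+^0(X)=R_{r,+}\cdot\lambda$ by \eqref{eq:44} (and the fact, cf.~\eqref{eq:81}, that a symmetric homomorphism which is a $\Q$-polarization is a polarization), whence $\xi_\lambda\colon(R_r,R_{r,+})\isoto(\calP(X),\calP_+(X))$ with $\xi_\lambda(1)=\lambda$. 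That these two maps are mutually inverse and descend to isomorphism classes rests on the automatic $F$-linearity of the first paragraph: any isomorphism of the underlying principally polarized surfaces intertwines the $\iota(F)$-actions, hence the $R_r$-module structures on the N\'eron--Severi groups and the parametrizations $\xi$. I expect the technical core of the argument to be the local discriminant computation of the second paragraph together with this matching of the two equivalence relations; neither is deep, but both require care.
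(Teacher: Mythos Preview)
Your proof is correct, and the route differs from the paper's in a useful way.

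Your central lemma---that for $[X]\in\Lambda_r^\un$ with $r\in\{1,16\}$, a symmetric homomorphism $\lambda$ is a principal polarization if and only if it is a totally positive generator of the invertible $R_r$-module $\calP(X)$---is established by a direct local Gram-determinant computation built on \eqref{eq:54}. Once you have this, both the characterization of principal polarizability and the bijections \eqref{eq:V.5} are immediate, and the automatic $F$-linearity of isomorphisms (since $F=\Q(\pi_X)$) takes care of matching the two equivalence relations.

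The paper argues differently for the converse direction of the characterization: rather than your local criterion, it uses Proposition~\ref{prop:adel-polar-module} (the adelic formula $\varphi_*\calP(X)=\Nr(g)^{-1}\calP(X_0)$) together with the nonemptiness of $\Lambda_r^\pp$ from Proposition~\ref{P.4} to move within the genus to a principally polarized $X_0$ and then pull back via Lemma~\ref{lem:genus-polarization}. For the bijection, the paper picks $u\in R_+$ with $\xi(u)$ principal (which exists by the first part) and compares degrees in $\xi(u)=u\,\xi(1)$ to force $u\in R_+^\times$; this is the global shadow of your local computation.

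Your approach is more self-contained: it does not need Proposition~\ref{prop:adel-polar-module} or the a priori existence of a principally polarized representative in each genus. The paper's approach has the advantage of reusing machinery already in place and avoiding the explicit $\ell$-by-$\ell$ discriminant check. Both are short; yours isolates the key local fact more cleanly.
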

\begin{proof}
  First, suppose that $X$ is equipped with a principal polarization
  $\lambda: X\to X^t$.  Necessarily,
  $[X]\in \Lambda_1^\un\cup \Lambda_{16}^\un$ by Proposition~\ref{P.4}.  The map
  \[\calP(X)\to R\coloneqq F\cap \End(X), \qquad \lambda'\mapsto \lambda^{-1}\lambda'\]
establishes an isomorphism of $R$-modules that identifies
$\calP_+(X)$ with $R_+$.

Conversely, suppose that $[X]\in \Lambda_1^\un\cup \Lambda_{16}^\un$
and $(\calP(X), \calP_+(X))$ represents the trivial class in
$\Pic_+(R)$. According to Proposition~\ref{P.4}, there exists a principally
polarizable $X_0$ in the
same genus of $X$. Let $\varphi: X\to X_0$ be a $\Q$-isogeny and pick
$g\in \whD^\times$ such that $(X, \varphi)=g(X_0, \id_0)$ in
$\Qisog(X_0)$. From Proposition~\ref{prop:adel-polar-module},
$(\calP(X), \calP_+(X))$ is isomorphic to  $\Nr(g)^{-1}(R, R_+)$,
so our assumption implies that $\Nr(g)\in
F_+^\times\Nr(\wh\calO_0^\times)$. Pick $\alpha\in D^\times$ and
$u\in \wh\calO_0^\times$ such that $\Nr(\alpha g u)=1$. Let $g'=\alpha
g u$ and $\varphi'=\alpha \varphi$. Then $g'\in \whD^1$ and
$(X, \varphi')=g'(X_0, \id_0)$. Now it follows from  Lemma~\ref{lem:genus-polarization}
that the pull-back of any principal polarization on $X_0$ along
$\varphi'$ is a principal polarization on $X$.

Lastly, let $[X, \xi]$ be a member of $\Lambda_r^\pmm(R, R_+)$ with
$r=1, 16$ and $R=O_F$ or $A$ accordingly. Then there exists
$u\in R_+$ such that $\xi(u)$ is a principal
polarization. Comparing the degrees on both sides of $\xi(u)=u\xi(1)$,
one immediately sees that $u\in R_+^\times$ and $\xi(1)$ is a
principal polarization as well. It follows the map
$[X, \xi]\mapsto [X, \xi(1)]$ establishes a bijection
$\Lambda_r^\pmm(R, R_+)\cong \Lambda_r^\pp$. 
\end{proof}


\begin{rem}
  Suppose that $p=2$. Then $\Pic_+(\Z[\sqrt{2}])$ is trivial, and
from \cite[Theorem~1.2]{xue-yang-yu:ECNF}, there is a unique abelian
surface $X/\F_2$ up to isomorphism satsifying
  $\pi_X^2=2$.  It admits a unique principal polarization up to
  isomorphism by \eqref{eq:76}. Moreover,  
  $\End(X)$ represents the unique type of
  maximal orders in the quaternion $\Q(\sqrt{2})$-algebra
  $D_{\infty_1, \infty_2}$. Therefore, for the rest of this section we
  assume that $p$ is an \emph{odd} prime (unless specified otherwise).  
\end{rem}

\begin{prop}\label{PP.6}
Let $p, r$ and $R_r$ be as  in (\ref{eq:513}), and
let $(P, P_+)$ be an invertible $R_r$-module with a notion of
positivity. 
  Fix
  a member $[X_r, \xi_r]\in \Lambda_r^\pmm(P, P_+)$ and put 
  $\calO_r\coloneqq \End_{\F_p}(X_r)$. Let $\calT_r[P, P_+]\subseteq \Tp(\calO_r)$ be the image of the
  map
  \begin{equation}\label{eq:84}
\Upsilon^\un: \Lambda_r^\un[P, P_+]\to \Tp(\calO_r)\qquad [X]\mapsto \dbr{\End(X)}.    
  \end{equation}
 There is a commutative diagram 
  \begin{equation}
    \label{eq:517}
    \begin{tikzcd}
       \Lambda_r^\pmm(P,P_+)\ar[r, twoheadrightarrow, "f"] \ar[d, leftrightarrow,
       "\simeq"]&  \Lambda_r^\un[P,P_+] \ar[d, leftrightarrow,
       "\simeq"]\ar[r, twoheadrightarrow, "\Upsilon^\un"]& \calT_r[P, P_+]\ar[d, leftrightarrow, "\simeq"]\\
D^1\backslash
       \whD^1/\wcO_r^1          \ar[r, twoheadrightarrow] & D^\times\backslash
    D^\times \whD^1\wcO_r^\times/\wcO_r^\times\ar[r, twoheadrightarrow] & D^\times\bsh D^\times\whD^1\calN(\wcO_r)/\calN(\wcO_r) 
    \end{tikzcd}
  \end{equation}
  where $f$ is the forgetful map in (\ref{eq:511}), and      the bottom two
horizontal maps are canonical projections. 
\end{prop}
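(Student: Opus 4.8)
The plan is to deduce all three bijections, together with both commuting squares, from Proposition~\ref{prop:H-genus-isom} applied with $H=\ul D^1$ — recall from \S\ref{sect:neron-severi} that, $D$ being totally definite, the subgroup $G^1$ attached to any $\Q$-polarization is exactly the reduced-norm-one subgroup $\ul D^1\subseteq G=\ul D^\times$ — in conjunction with the polarization-module dictionary of Proposition~\ref{prop:adel-polar-module}. I would fix the base point $(X_r,\xi_r)$ and use $\xi_r$ to identify $P$ with the $R_r$-submodule $\calP(X_r)\subseteq\calP^0(X_r)$, so that $\xi_r$ becomes the identity inclusion and $P_+=\calP_+(X_r)$. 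Two facts get used repeatedly: $\Nr\colon D^\times\to F_+^\times$ is surjective (as in the proof of Lemma~\ref{P.2}, using \cite[Theorem~III.4.1]{vigneras}), and $\Nr(\wcO_r^\times)=\wh R_r^\times$ by \eqref{eq:116}.

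The middle and right columns, together with the right-hand square, I would read off from the unpolarized picture. By \eqref{eq:32} and the commutative diagram \eqref{eq:38} (with $\calO_r=\End(X_r)$ in the role of the standard order), $\Lambda_r^\un\cong D^\times\backslash\whD^\times/\wcO_r^\times$ and $\Upsilon^\un$ becomes the projection onto $D^\times\backslash\whD^\times/\calN(\wcO_r)\cong\Tp(\calO_r)$. Proposition~\ref{prop:adel-polar-module} (whose left vertical arrow in \eqref{eq:510} is induced by $g\mapsto\Nr(g)^{-1}$) identifies the fibre $\Lambda_r^\un[P,P_+]=\ul\calP^{-1}([P,P_+])$ with the set of double cosets $D^\times g\wcO_r^\times$ satisfying $\Nr(g)\in F_+^\times\Nr(\wcO_r^\times)$; by the two facts above this locus equals $D^\times\whD^1\wcO_r^\times$, and pushing forward along $\Upsilon^\un$ (and using $\wcO_r^\times\subseteq\calN(\wcO_r)$) yields $\calT_r[P,P_+]\cong D^\times\backslash D^\times\whD^1\calN(\wcO_r)/\calN(\wcO_r)$. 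The bottom-right horizontal map is then, by construction, the canonical projection.

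For the left column I would apply Proposition~\ref{prop:H-genus-isom} to $(X_r,\id)\in\Qisog(X_r)$ with $H=\ul D^1$: its stabiliser in $\whD^1$ is $\whD^1\cap\wcO_r^\times=\wcO_r^1$, so the set $\Lambda_{\ul D^1}(X_r,\id)$ of $\ul D^1$-isomorphism classes inside the $\ul D^1$-genus of $(X_r,\id)$ is bijective to $D^1\backslash\whD^1/\wcO_r^1$. It then remains to match $\Lambda_{\ul D^1}(X_r,\id)$ with $\Lambda_r^\pmm(P,P_+)$ through $(X,\varphi)\mapsto(X,\varphi^*\xi_r)$. This is legitimate because, when $(X,\varphi)=h(X_r,\id)$ with $h\in\whD^1$, Proposition~\ref{prop:adel-polar-module} gives $\varphi_*\calP(X)=\Nr(h)^{-1}\calP(X_r)=\calP(X_r)$, so $\varphi^*$ restricts to an isomorphism $(\calP(X_r),\calP_+(X_r))\isoto(\calP(X),\calP_+(X))$ and $\varphi^*\xi_r$ is a genuine $(P,P_+)$-parametrisation. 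That the assignment is well defined on $\ul D^1$-isomorphism classes, and injective, both reduce to the fact (\S\ref{sect:neron-severi}, \eqref{eq:44}) that pull-back by $\alpha\in D^\times$ acts on $\calP^0(X)\cong F$ as multiplication by $\Nr(\alpha)$, so that replacing $\varphi$ by $\alpha\varphi$ with $\alpha\in D^1$ changes $\varphi^*\xi_r$ by nothing.

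I expect surjectivity of this last matching to be the main obstacle — it is the one step that is not automatic. Given $[X,\xi]\in\Lambda_r^\pmm(P,P_+)$, I would pick any $\Q$-isogeny $\psi\colon X\to X_r$, write $(X,\psi)=g(X_r,\id)$, note that $\calP(X)\cong P$ forces $\Nr(g)^{-1}\in F_+^\times\Nr(\wcO_r^\times)$, and then use surjectivity of $\Nr$ on $D^\times$ together with $\Nr(\wcO_r^\times)=\wh R_r^\times$ to replace $\psi$ by $\varphi=\beta\psi$ for a suitable $\beta\in D^\times$ (after absorbing a unit of $\wcO_r^\times$) so that $(X,\varphi)$ lands in the $\ul D^1$-genus of $(X_r,\id)$; a final adjustment of $\beta$ within its $D^1$-coset, using once more that $\alpha^*$ scales $\calP^0(X)$ by $\Nr(\alpha)$, produces $\varphi^*\xi_r=\xi$. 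Granting this, the left square commutes because $f$ and the bijection just built are both compatible with passage to the underlying abelian variety, while $\ul D^1\hookrightarrow\ul D^\times$ induces precisely the canonical projection $D^1\backslash\whD^1/\wcO_r^1\to D^\times\backslash D^\times\whD^1\wcO_r^\times/\wcO_r^\times$, which is surjective since every double coset in the target has a representative in $\whD^1$. The case $r=8$ requires no separate argument, although one could alternatively reduce it to $r=1$ via the minimal isogeny of Remark~\ref{36}(2) and the identification \eqref{eq:36}.
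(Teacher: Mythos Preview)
Your overall strategy coincides with the paper's: deduce the middle and right vertical bijections from Proposition~\ref{prop:adel-polar-module} and diagram~\eqref{eq:38}, and build the left bijection by sending a member $(X,\varphi)$ of the $\ul D^1$-genus of $(X_r,\id)$ to the $(P,P_+)$-polarized pair $(X,\varphi^*\xi_r)$. Your treatment of well-definedness and injectivity for the left column is correct, as is the commutativity of both squares.

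There is, however, a genuine gap in your surjectivity argument for the left bijection. After landing $(X,\varphi)$ in the $\ul D^1$-genus you have $\xi = w\cdot\varphi^*\xi_r$ for some $w\in R_{r,+}^\times$, and you propose to close this discrepancy by ``a final adjustment of $\beta$ within its $D^1$-coset''. That cannot work: any such adjustment has reduced norm $1$, so by the very identity you invoke ($\alpha^*$ scales $\calP^0$ by $\Nr(\alpha)$) it leaves $\varphi^*\xi_r$ unchanged. The correct fix is to multiply $\varphi$ on the left by some $\gamma\in D^\times$ with $\Nr(\gamma)=w$, which exists since $\Nr(D^\times)=F_+^\times$, and then observe that the resulting adele $\gamma h$ has $\Nr(\gamma h)=w\in R_{r,+}^\times\subseteq\wh R_r^\times=\Nr(\wcO_r^\times)$, so a \emph{second} absorption of a unit of $\wcO_r^\times$ brings you back into $\whD^1$. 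The paper avoids this bookkeeping altogether: fixing $b_r\in P_+$ and setting $\lambda_r:=\xi_r(b_r)$, $\lambda:=\xi(b_r)$, Lemma~\ref{P.2} produces directly a $\Q$-isogeny $\varphi\colon X\to X_r$ with $\varphi^*\lambda_r=\lambda$; since every $b\in P$ is of the form $ab_r$ with $a\in F$, one gets $\varphi^*\xi_r(b)=a\varphi^*\xi_r(b_r)=a\xi(b_r)=\xi(b)$ for all $b$, so $\varphi^*\xi_r=\xi$ on the nose and $(X,\varphi)$ automatically lies in $\scrG_{G^1}(X_r)$.
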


\begin{proof}
From   (\ref{eq:510}),   $\Lambda_r^\un[P,P_+]$ can be identified
with the neutral fiber of the map 
\[D^\times \bsh \whD^\times/\wh \calO_r^\times\xrightarrow{\Nr^{-1}}
  F^\times_+\backslash \whF^\times/\Nr(\wh \calO_r^\times),\]
which shows that the middle vertical map is a bijection.  The
commutativity of right
square follows from that of (\ref{eq:38}), and bijectivity of the right vertical map
follows from the definition of $\calT_r[P, P_+]$.

To get the left vertical bijection, let
$\scrG_G^\pmm(X_r)$ be the subset of the genus
$\scrG_G(X_r)\subseteq \Qisog(X_r)$ consisting all members $(X,
\varphi:X\to X_r)$ such that 
$\varphi_*\calP(X)=\calP(X_r)$. 
Given $(X, \varphi)\in \scrG_G^\pmm(X_r)$, 
write $(X, \varphi)=g(X_r, \id_r)$
for some $g\in \whD^\times$, where $\id_r$ denotes the
identity map of $X_r$. Then $\Nr(g)\in \wh R_r^\times$ by
Proposition~\ref{prop:adel-polar-module}, so there exists $u\in \wh
\calO_r^\times$ such that $\Nr(g)=\Nr(u)$ by \eqref{eq:116}.  Put $g'=gu^{-1}$. Then
$g'\in \whD^1$ and $(X, \varphi)=g'(X_r, \id_r)$. It follows
that $\whD^1$ acts transitively on $\scrG_G^\pmm(X_r)$, that is, 
\[\scrG_G^\pmm(X_r)=\scrG_{G^1}(X_r)\simeq \whD^1/\wh\calO_r^1.\]

Each member $(X, \varphi)\in \scrG_G^\pmm(X_r)$ 
gives rise to a 
$(P, P_+)$-polarized abelian surface $(X,
\xi)$ by putting $\xi=\varphi^*\xi_r$, i.e.~$\xi(b)=\varphi^*\xi_r(b)$
for every $b\in P$. If $(X, \varphi)$ and $(X',
\varphi')$ in $ \scrG_G^\pmm(X_r)$  give rise to isomorphic pairs $(X, \xi)$ and $(X', \xi')$, then
there exists an isomorphism $\alpha: X\to X'$ such that
$\alpha^*\xi'(b)=\xi(b)$ for every $b\in P$.  Let $\beta=\varphi'\alpha\varphi^{-1}\in
D^\times$. Then $\beta(X, \varphi)=(X', \varphi')$ and
$\beta^*\lambda_r=\lambda_r$ for every $\lambda_r\in \calP(X_r)$. 
This implies that $\beta\in D^1$ since the Rosati involution
induced by every polarization $\lambda_r\in \calP_+(X_r)$ coincides
with the canonical involution on $D=\End^0(X_r)$. Conversely, if
$\beta(X, \varphi)=(X', \varphi')$ for some $\beta\in
D^1$, then it is  straightforward  to show that $(X,
\xi)$ and $(X', \xi')$ are isomorphic. This proves that there is an injective map
\begin{equation}
  \label{eq:V.7}
D^1\backslash
      \whD^1/\wcO_r^1\hookrightarrow   \Lambda_r^\pmm(P,P_+)
\end{equation}
sending the neutral class to $[X_r, \xi_r]$. 

Let $(X,\xi)$ be a $(P, P_+)$-polarized abelian surface with
$[X]\in \Lambda_r^\un$.  Fix $b_r\in P_+$ and put
$\lambda_r\coloneqq \xi_r(b_r)\in \calP_+(X_r)$ and
$\lambda\coloneqq \xi(b_r)\in \calP_+(X)$. By Lemma~\ref{P.2}, there exists a
quasi-isogeny $\varphi: X\to X_r$ such that
$\varphi^*\lambda_r=\lambda$. As any element $b\in P$ is of the form
$ab_r$ for some $a\in F$, we get
  \begin{equation}
    \label{eq:515}
    \begin{split}
    \varphi^* \xi_r(b)&=\varphi^t \xi_r(ab_r) \varphi
            = a \varphi^t \xi_r(b_r) \varphi 
      =a \varphi^* \xi_r(b_r) =a \xi(b_r)=\xi(b). \\     
    \end{split}
  \end{equation}
It follows that $(X, \varphi)\in \scrG_G^\pmm(X_r)$, so the map in
(\ref{eq:V.7}) is surjective as well.

The left square commutes because both  double coset descriptions of
$\Lambda_r^\pmm(P,P_+)$ and $\Lambda_r^\un[P, P_+]$ are induced from
the same $\whD^\times$-action on $\scrG_G(X_r)\subseteq \Qisog(X_r)$. 
\end{proof}

Let $\calO_r$ be as in Proposition~\ref{PP.6}. It is clear from (\ref{eq:517}) that
\begin{equation}
  \label{eq:90}
\calT_r[P, P_+]=\left\{\dbr{\calO}\mid \text{$\calO$
    is in the same spinor genus as $\calO_r$}\right\}\eqqcolon \Tp_\sg(\calO),  
\end{equation}
which generalizes 
Lemma~\ref{lem:spinor-genus}. 
  When $p\equiv
  1\pmod{4}$ and $r\in \{1, 16\}$,  we have seen in \eqref{xeq:26x} and
  \eqref{eq:x30y} that $\Tp(\calO_r)$ in fact forms a single spinor
  genus (i.e.~$\Tp(\calO_r)=\Tp_\sg(\calO_r)$), so $\calT_r[P, P_+]=\Tp(\calO_r)$ for $r\in \{1, 16\}$. A similar result holds
  when $r=8$ according to the following
  corollary. 
\begin{cor}\label{cor:p1mod4-P-plus-bij}
Suppose that $p\equiv
1\pmod{4}$.   Then the map
\begin{equation}
  \label{eq:93}
\Upsilon^\pmm=\Upsilon^\un f:\Lambda_r^\pmm(P, P_+)\xrightarrow{}
\Tp(\calO_r)  
\end{equation}
 is 
bijective if $r\in \{1, 16\}$, and it is surjective if $r=8$. In
particular, $f: \Lambda_r^\pmm(P, P_+)\to \Lambda_r^\un[P, P_+]$ is 
bijective for $r\in \{1, 16\}$, and $\calT_r[P, P_+]=\Tp(\calO_r)$ for all $r\in \{1, 8,
16\}$.  
\end{cor}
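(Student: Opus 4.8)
The plan is to read off every assertion from the commutative diagram~\eqref{eq:517} of Proposition~\ref{PP.6} (whose hypotheses apply, since $\Lambda_r^\pmm(P,P_+)$ is nonempty: $\ul\calP$ is onto $\Pic_+(R_r)$ and $f$ is surjective). The three vertical maps in~\eqref{eq:517} are bijections, and the source and target of the bottom row do not depend on the chosen representative $(P,P_+)$. Under those identifications $f$ becomes the canonical projection
\[
D^1\bsh\whD^1/\wcO_r^1\longrightarrow D^\times\bsh D^\times\whD^1\wcO_r^\times/\wcO_r^\times,
\]
and $\Upsilon^\pmm=\Upsilon^\un f$ becomes its composite with the further canonical projection onto $D^\times\bsh D^\times\whD^1\calN(\wcO_r)/\calN(\wcO_r)$, which by~\eqref{eq:517} is $\calT_r[P,P_+]$. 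Both projections are surjective, so $f$ is onto $\Lambda_r^\un[P,P_+]$ and $\Upsilon^\pmm$ is onto $\calT_r[P,P_+]$ for free. Hence it remains to prove (i) $\calT_r[P,P_+]=\Tp(\calO_r)$ for every $r\in\{1,8,16\}$, and (ii) injectivity of $\Upsilon^\pmm$ — equivalently of the surjection $f$, which then becomes bijective — for $r\in\{1,16\}$.

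For (i) I would pass to the adelic description $\Tp(\calO_r)\simeq D^\times\bsh\whD^\times/\calN(\wcO_r)$ of~\eqref{eq:37}, under which $\calT_r[P,P_+]$ is the subset attached to the subgroup-like set $D^\times\whD^1\calN(\wcO_r)\subseteq\whD^\times$; so (i) amounts to $D^\times\whD^1\calN(\wcO_r)=\whD^\times$. By~\eqref{eq:41}, $\calN(\wcO_r)\supseteq\whF^\times\wcO_r^\times$ for $r\in\{1,16\}$ and $\calN(\wcO_8)=\whF^\times\wcO_4^\times$, so it suffices to show $D^\times\whD^1\whF^\times\wcO^\times=\whD^\times$ for $\calO\in\{\calO_1,\calO_4,\calO_{16}\}$. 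Since $\whF^\times$ is central and $\Nr\colon\whD^\times\to\whF^\times$ is surjective with kernel $\whD^1$ ($D$ splitting at every finite place), a short manipulation (using that $\whD^1$ is normal in $\whD^\times$, one conjugates $D^\times$ past it) reduces this to $F_+^\times\,(\whF^\times)^2\,\Nr(\wcO^\times)=\whF^\times$; here $\Nr(D^\times)=F_+^\times$ because $D$ is totally definite over the totally real $F$, and $\Nr(\wcO^\times)=\wh R^\times$ with $R=O_F$ for $\calO\in\{\calO_1,\calO_4\}$ and $R=A$ for $\calO=\calO_{16}$, by~\eqref{eq:116} (using~\eqref{eq:C.3} for $\calO_4$). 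Dividing by $F_+^\times\Nr(\wcO^\times)$, the identity says exactly that the narrow class group $\Cl_+(O_F)$, resp.~$\Cl_+(A)$, is $2$-divisible, i.e.\ of odd order. For $\Cl_+(O_F)$ this is genus theory: $p\equiv1\pmod4$ is prime, so $\grd_F=p$ has a single prime divisor and there is just one genus. For $\Cl_+(A)$ it follows since $h_+(A)$ differs from $h_+(O_F)$ only by the local unit index at $2$, which is $1$ or $3$ (cf.~\eqref{eq:131}), hence is odd too. This proves (i).

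For (ii), with (i) in hand, for $r\in\{1,16\}$ the map $\Upsilon^\pmm$ is — via~\eqref{eq:517}, \eqref{eq:37}, and $\calN(\wcO_r)=\whF^\times\wcO_r^\times$ — precisely the canonical map $D^1\bsh\whD^1/\wcO_r^1\to D^\times\bsh\whD^\times/\calN(\wcO_r)$ for an $A$-order $\calO_r$ in the genus described in \S\ref{sect:end-ring}. That map is shown to be bijective in Lemma~\ref{lem:bije-eichler-p1mod4} (for $r=1$) and Lemma~\ref{lem:biject-O16} (for $r=16$) — the very inputs used to prove Corollary~\ref{cor:Phi-bije-p1mod4}. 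Hence $\Upsilon^\pmm$ is bijective for $r\in\{1,16\}$, in particular injective, so the surjection $f$ through which it factors is bijective, completing the argument. I expect step (i) to be the main obstacle — correctly pushing the equality $D^\times\whD^1\calN(\wcO_r)=\whD^\times$ through the reduced norm and confirming the oddness of $h_+(O_F)$ and $h_+(A)$; step (ii) is essentially inherited from the lemmas already established for Corollary~\ref{cor:Phi-bije-p1mod4}, and the surjectivity statements are formal once~\eqref{eq:517} is invoked.
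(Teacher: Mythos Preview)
Your proof is correct and follows essentially the same route as the paper: the diagram~\eqref{eq:517} reduces everything to the adelic projections, and for $r\in\{1,16\}$ you invoke Lemmas~\ref{lem:bije-eichler-p1mod4} and~\ref{lem:biject-O16} exactly as the paper does (it simply refers back to Corollary~\ref{cor:Phi-bije-p1mod4}). Your step~(i) is redundant for $r\in\{1,16\}$, since those lemmas already land surjectively on the full double coset $D^\times\bsh\whD^\times/\whF^\times\wcO_r^\times=\Tp(\calO_r)$; it is genuinely needed only for $r=8$, where the paper instead factors the map as
\[
D^1\bsh\whD^1/\wcO_8^1\twoheadrightarrow D^1\bsh\whD^1/\wcO_4^1\xrightarrow{\ \sim\ } D^\times\bsh\whD^\times/\whF^\times\wcO_4^\times=\Tp(\calO_8)
\]
and applies Lemma~\ref{lem:bije-eichler-p1mod4} to the Eichler order $\calO_4$. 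Your odd-narrow-class-number argument and the paper's factorization through $\calO_4$ are equivalent, both resting on $h_+(O_F)$ being odd for $p\equiv 1\pmod 4$.
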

\begin{proof}
In light of Corollary~\ref{cor:prin-polar-module},  the present corollary for
the cases $r\in \{1, 16\}$ is just a
generalization of Lemma~\ref{lem:Phi-bije-p1mod4} and follow from  Lemma~\ref{lem:bije-eichler-p1mod4} or
   Lemma~\ref{lem:biject-O16} as before. 

Suppose that $r=8$. From (\ref{eq:41}),  $\calN(\wcO_8)=\whF^\times
\wcO_4^\times$, where $\calO_4$ is the Eichler order given in
(\ref{eq:C.3}). Interpreted adelically, the map $\Upsilon^\pmm: \Lambda_8^\pmm(P, P_+)\to 
\Tp(\calO_8)$ is the composition of the following canonical
projections:
\[D^1\backslash
      \whD^1/\wcO_8^1\twoheadrightarrow D^1\backslash
      \whD^1/\wcO_4^1\to D^\times\backslash
      \whD^\times/\whF^\times \wcO_4^\times.\]
The first map is obviously surjective, and the second map is bijective
by Lemma~\ref{lem:bije-eichler-p1mod4}. Thus the composition is
surjective. 
  \end{proof}


  If $p\equiv 3\pmod{4}$, then we have constructed a map $\Xi:
  \Tp(D)\to \grG(O_F)$ in \eqref{eq:11xi} from the type set $\Tp(D)$ of maximal orders in $D$ to
  the Gauss genus group $\grG(O_F)\coloneqq
  \Pic_+(O_F)/\Pic_+(O_F)^2$, whose fibers are precisely the distinct spinor
  genera of maximal orders in $D$. Since $\abs{\grG(O_F)}=2$ when
  $p\equiv 3\pmod{4}$, 
  $\Tp(D)$ accordingly decomposes into a disjoint union of two spinor genera
  $\Tp^+(D)\amalg \Tp^-(D)$, where the principal spinor genus $\Tp^+(D)$
  (i.e.~the neutral fiber) consists of the types of
  maximal orders that occur as endomorphism rings of principally
  polarizable members of $\Isog(\sqrt{p})$. From \eqref{eq:90}, 
$\calT_1[P, P_+]$ coincides with either $\Tp^+(D)$ or $\Tp^{-}(D)$. 

\begin{lem}\label{lem:p3mod4-spinor-genus}
  Suppose that $p\equiv 3\pmod{4}$.  The map $\Upsilon^\un:
\Lambda_1^\un[P, P_+]\to \calT_1[P, P_+]$ is  bijective for every $[P,
P_+]\in \Pic_+(O_F)$. Moreover,
\begin{equation}
  \label{eq:89}
  \calT_1[P, P_+]=
  \begin{cases}
    \Tp^+(D) &\text{if } [P, P_+]\in \Pic_+(O_F)^2;\\
    \Tp^{-}(D)&\text{otherwise.}
  \end{cases}
\end{equation}
\end{lem}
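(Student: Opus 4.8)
The plan is to push everything through the commutative diagram \eqref{eq:517} with $r=1$, where by Remarks~\ref{rem:spinor-class} and \ref{rem:Tp-sg} the map $\Upsilon^\un$ becomes a canonical projection between adelic double coset spaces attached to a \emph{maximal} order, and then to settle the two assertions separately: bijectivity by a direct computation using genus theory, and the $\Tp^\pm$-identification by tracking reduced norms through Proposition~\ref{prop:adel-polar-module}.

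First fix $[X_1,\xi_1]\in\Lambda_1^\pmm(P,P_+)$ and put $\calO_1:=\End_{\F_p}(X_1)$, a maximal $O_F$-order in $D$ (for $p\equiv 3\pmod 4$ every member of $\Isog(\sqrt p)$ has maximal endomorphism ring). Since $D$ splits at every finite place, $\calN(\wcO_1)=\whF^\times\wcO_1^\times$ and $\Nr(\wcO_1^\times)=\whO_F^\times$ (cf.\ \eqref{eq:41}, \eqref{eq:116}). By Remarks~\ref{rem:spinor-class} and \ref{rem:Tp-sg}, \eqref{eq:517} identifies $\Upsilon^\un\colon\Lambda_1^\un[P,P_+]\to\calT_1[P,P_+]$ with the canonical projection
\[
\Cl_\scc(\calO_1)=D^\times\bsh D^\times\whD^1\wcO_1^\times/\wcO_1^\times\;\longrightarrow\;\Tp_\sg(\calO_1)=D^\times\bsh D^\times\whD^1\whF^\times\wcO_1^\times/\whF^\times\wcO_1^\times ,
\]
which is obviously onto; so bijectivity amounts to injectivity.

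For injectivity, let $[I_1],[I_2]\in\Cl_\scc(\calO_1)$ satisfy $\dbr{\calO_l(I_1)}=\dbr{\calO_l(I_2)}$. Left-translating $I_2$ by an element of $D^\times$ (which changes neither its class in $\Cl(\calO_1)$ nor its spinor class) I may assume $\calO_l(I_1)=\calO_l(I_2)$. Writing $\wh I_j=x_j\wcO_1$ with $x_j\in D^\times\whD^1$, equality of left orders forces $x_2^{-1}x_1=au$ with $a\in\whF^\times$ and $u\in\wcO_1^\times$, so $\wh I_1=a\wh I_2$; and since $x_2^{-1}x_1\in D^\times\whD^1$, taking reduced norms gives $a^2\in F_+^\times\whO_F^\times$, i.e.\ $[a]\in\Cl_+(O_F)[2]$. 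Genus theory ($\grd_F=4p$ has two prime divisors) gives $\abs{\Cl_+(O_F)[2]}=2$, the nontrivial element being the narrow class of the ramified prime $\sqrt p\,O_F$ over $p$ (recall $O_F$ has no unit of norm $-1$ for $p\equiv 3\pmod 4$); and that prime is \emph{wide}-principal, being generated by $\sqrt p$. Hence $\Cl_+(O_F)\onto\Cl(O_F)$ kills $\Cl_+(O_F)[2]$, so $a\in F^\times\whO_F^\times$, say $a=\gamma w$ with $\gamma\in F^\times$ and $w\in\whO_F^\times\subseteq\wcO_1^\times$; then $\wh I_1=\gamma w x_2\wcO_1=\gamma x_2\wcO_1=\gamma\wh I_2$ and $[I_1]=[I_2]$. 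I regard this as the crux of the argument: it is not formal, and it rests on the fact that the $2$-torsion of the narrow class group of $F$ is generated by a class which is principal but not narrowly principal.

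For the $\Tp^\pm$-identification, take as base point $(X_0,\lambda_0)=\Res_{\F_{p^2}/\F_p}(E,\lambda_E)$ of Example~\ref{ex:res-p3mod4}, so $\End(X_0)=\bbO_8$ and $\Tp^+(D)=\Tp_\sg(\bbO_8)$ by Remark~\ref{rem:end-ring-diff-cases}. Writing $[X_1]=[g]$ in $D^\times\bsh\whD^\times/\wcO_0^\times$, Proposition~\ref{prop:adel-polar-module} gives $\ul\calP(X_1)=[\Nr(g)^{-1}]$, so $[\Nr(g)]$ and $[P,P_+]$ have the same image in the order-$2$ group $\grG(F)=\Pic_+(O_F)/\Pic_+(O_F)^2\cong\whF^\times/(F_+^\times\whF^{\times 2}\whO_F^\times)$ \cite[Theorem~14.34]{Cohn-invitation-Class-Field}. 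Now $\wh{\End(X_1)}=g\wcO_0g^{-1}$ lies in the spinor genus of $\bbO_8$ iff $g\in D^\times\whD^1\calN(\wcO_0)=D^\times\whD^1\whF^\times\wcO_0^\times$, and taking reduced norms (Hasse--Schilling $\Nr(D^\times)=F_+^\times$, $\Nr(\whD^1)=1$, $\Nr(\whF^\times)=\whF^{\times 2}$, $\Nr(\wcO_0^\times)=\whO_F^\times$) this holds iff $\Nr(g)\in F_+^\times\whF^{\times 2}\whO_F^\times$, i.e.\ iff $[P,P_+]\in\Pic_+(O_F)^2$. Since there are exactly two spinor genera of maximal $O_F$-orders in $D$ (Remark~\ref{rem:end-ring-diff-cases}), and the other one is realized as $\Tp_\sg(\End(X))$ for some $[X]\in\Lambda_1^\un$ with $\dbr{\End(X)}\in\Tp^-(D)$ (such $[X]$ exists because $\Upsilon^\un\colon\Lambda_1^\un\onto\Tp(D)$ is surjective and $\Tp^-(D)\neq\emptyset$), the assignment $[P,P_+]\mapsto\Tp_\sg(\calO_1)$ descends to a bijection $\grG(F)\xrightarrow{\sim}\{\Tp^+(D),\Tp^-(D)\}$ sending the trivial class to $\Tp^+(D)$. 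This is \eqref{eq:89}, and together with the previous step it proves the lemma.
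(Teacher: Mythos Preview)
Your proof is correct, and the second half (the $\Tp^\pm$-identification) follows essentially the same route as the paper: both compute that the spinor genus of $\calO_1=\End(X_1)$ is determined by the image of $[\Nr(g)]$ in the Gauss genus group $\grG(F)$, using Proposition~\ref{prop:adel-polar-module} and the identification $\calN(\wcO_1)=\whF^\times\wcO_1^\times$. The paper packages this as the commutative diagram~\eqref{eq:91}; you unwind the same identifications by hand. One small point: your ``iff'' in ``this holds iff $\Nr(g)\in F_+^\times\whF^{\times 2}\whO_F^\times$'' needs the converse direction, which is the usual lifting argument (pick $\beta\in D^\times$ with $\Nr(\beta)=\alpha$ via Hasse--Schilling, then adjust by a unit of $\wcO_0$); this is routine but worth one sentence.

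The bijectivity argument, however, is a genuine addition. The paper simply cites \cite[Lemma~4.7]{xue-yu:spinor-class-no}, whereas you give a self-contained proof: two spinor-equivalent right ideals with conjugate left orders differ by a central idele $a$ with $[a]\in\Cl_+(O_F)[2]$, and since the unique nontrivial element of $\Cl_+(O_F)[2]$ is $[\sqrt{p}\,O_F]$ --- wide-principal but not narrowly so, as $O_F$ has no unit of norm $-1$ for $p\equiv 3\pmod 4$ --- the idele $a$ is already in $F^\times\whO_F^\times$. This is a clean genus-theory argument that makes transparent \emph{why} the fibers of $\Cl_\scc(\calO_1)\to\Tp_\sg(\calO_1)$ collapse: the obstruction lives in the kernel of $\Cl_+(O_F)[2]\to\Cl(O_F)$, which is everything. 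The paper's approach has the advantage of being stated in a general framework applicable to arbitrary orders; your approach is more elementary and exposes the arithmetic input specific to $\Q(\sqrt{p})$ with $p\equiv 3\pmod 4$.
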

\begin{proof}
  In light of the surjectivity of
  $\Lambda_1^\un\twoheadrightarrow \Tp(D)$ in \eqref{eq:40} and the
  adelic description of the map
  $\Upsilon^\un:\Lambda_1^\un[P, P_+]\to \calT_1[P, P_+]$ in
  (\ref{eq:517}), to show that $\Upsilon^\un$ is bijective, it is
  equivalent to prove that the canonical projection map 
  \[D^\times\backslash D^\times
    \whD^1\widehat{\bbO}^\times/\widehat{\bbO}^\times\twoheadrightarrow
    D^\times\bsh D^\times\whD^1\calN(\widehat\bbO)/\calN(\widehat\bbO) \]
is bijective for every maximal $O_F$-order $\bbO \subset D$.
The proof will  be worked out in Lemma~\ref{lem:bijection-prin-type}. 

To prove (\ref{eq:89}), let us fix a principally polarizable abelian
surface $Y_1/\F_p$ with $\pi_{Y_1}^2=p$ and put
$\grO_1\coloneqq \End(Y_1)$ as in \eqref{eq:74}. Then $\dbr{\grO_1}\in \Tp^+(D)$ by definition, and 
$\ul\calP(Y_1)\simeq (O_F,
O_{F, +})$ by 
Corollary~\ref{cor:prin-polar-module}.  Combining (\ref{eq:38}) and 
(\ref{eq:510}), we obtain a diagram
\begin{equation}
  \label{eq:91}
  \begin{tikzcd}
    \Lambda_1^\un\ar[d,"\ul\calP"]\ar[r,leftrightarrow, "\simeq"]  &
    D^\times\bsh \whD^\times/\wh\grO_1^\times \ar[r, twoheadrightarrow] \ar[d, "\Nr^{-1}"]&
    D^\times\bsh \whD^\times/\calN(\wh\grO_1)\ar[d,
    "\Nr^{-1}"]\ar[r,leftrightarrow, "\simeq"]  & \Tp(D)\ar[d, "\Xi"]\\
    \Pic_+(O_F)\ar[r,leftrightarrow, "\simeq"] & F^\times_+\bsh\whF^\times/\whO_F^\times \ar[r,twoheadrightarrow]&
    F^\times_+\bsh\whF^\times/\whF^{\times 2}\whO_F^\times \ar[r,leftrightarrow, "\simeq"] & \grG(O_F)
  \end{tikzcd}
\end{equation}
Here the commutativity of the right square follows from the
construction of $\Xi$ in \eqref{eq:3xy}, \eqref{eq:11xi}, and the fact
that $\abs{\grG(O_F)}=2$.   The middle square is obviously commutative (Recall that ). 
The choice of $Y_1/\F_p$ guarantees that the left square is
commutative  as well. The composition of the  maps in the top row is none other
than 
$\Upsilon^\un: \Lambda_1^\un\to \Tp(D)$, and the composition for the
bottom row is just the canonical projection $\Pic_+(O_F)\to \grG(O_F)$.


By definition, $ \Lambda_1^\un[P, P_+]$ is the fiber of the map
$\ul\calP$ over $[P, P_+]\in \Pic_+(O_F)$, and $\calT_1[P,
P_+]$ is the image of $\Lambda_1^\un[P, P_+]$ under the map
$\Upsilon^\un$. From the commutativity of (\ref{eq:91}), we have
$\calT_1[P, P_+]\subseteq \Tp^+(D)$ if $[P, P_+]\in \Pic_+(O_F)^2$,
and $\calT_1[P, P_+]\subseteq \Tp^{-}(D)$ otherwise. The equalities now 
follow from \eqref{eq:90}. 
\end{proof}



\begin{rem}\label{rem:spinor-class}
    Let $\calO$ be an arbitrary $A$-order in $D$. Following
  \cite[\S1]{Brzezinski-Spinor-Class-gp-1983}, we say two locally principal right
$\calO$-ideals $I$ and $I'$ 
are in the \emph{same spinor class} if there exists $x \in D^\times\whD^1$
such that $\whI'=x \whI$. We denote by $\Cl_\scc(\calO)$ the set of locally principal right
$\mathcal{O}$-ideal classes within the spinor class of $\calO$ itself
(regarded as a principal right $\calO$-ideal), and put 
$h_\scc(\calO)\coloneqq \abs{\Cl_\scc(\calO)}$.
The set $\Cl_\scc(\calO)$ can be 
described adelically  as 
\begin{equation}
  \label{eq:86}
  \Cl_\scc(\calO)\simeq D^\times\bsh
  (D^\times\whD^1\wcO^\times)/\wcO^\times. 
\end{equation}
Let $\SCl(\calO)$ be the set
of spinor classes of locally principal right $\calO$-ideals. Similar
to \eqref{eq:3xy}, $\SCl(\calO)$ admits an adelic description as
follows:
\begin{equation}\label{zeq:14}
    \SCl(\calO)\simeq  (D^\times\whD^1)\bsh \whD^\times/\wcO^\times\xrightarrow[\simeq]{\Nr}
F_+^\times\bsh \whF^\times/\Nr(\wcO^\times).  
\end{equation}

Let $[X_r, \xi_r]\in \Lambda_r^\pmm(P, P_+)$ and 
$\calO_r=\End(X_r)$ be as in Proposition~\ref{PP.6}.  Combining
\eqref{zeq:14} with 
\eqref{eq:116}, we see that the reduced norm map induces an
isomorphism
\begin{equation}
  \label{zeq:57}
\SCl(\calO_r)\simeq
\Pic_+(R_r)  
\end{equation}
 sending the spinor class of $\calO_r$ to the identity of
$\Pic_+(R_r)$. 
On the other hand, if we combine \eqref{eq:86} with (\ref{eq:517}),
then we get a 
bijection
\begin{equation}
  \label{eq:87}
  \Lambda_r^\un[P, P_+]\simeq \Cl_\scc(\calO_r)
\end{equation}
sending the prior fixed class $[X_r]\in \Lambda_r^\un[P, P_+]$ to the principal right $\calO_r$-ideal
class $[\calO_r]$. This is a refinement of the bijection
$\Lambda_r^\un\simeq \Cl(\calO_r)$ in (\ref{eq:38}). See also
\cite[(2.11)]{xue-yu:spinor-class-no}.  Similarly, under the
identifications \eqref{eq:90} and \eqref{eq:87}, the map $\Upsilon^\un:
\Lambda_1^\un[P, P_+]\to \calT_1[P, P_+]$ can be identified with
\begin{equation}
  \label{zeq:13}
  \Cl_\scc(\calO_r)\to \Tp_\sg(\calO_r), \qquad [I]\mapsto
  \dbr{\calO_l(I)}, 
\end{equation}
which refines the map $\Upsilon: \Cl(\calO_r)\to \Tp(\calO_r)$ in
\eqref{eq:39}. 
\end{rem}

To state the main theorem of class number and type number formulas for
our  abelian surfaces equipped with polarization modules,   we need some extra notation. 
If $p\equiv 1\pmod{4}$, then $A=\Z[\sqrt{p}]$ is a suborder of index
$2$ in $O_F$. We define
\begin{equation}
  \label{eq:94}
  \varpi\coloneqq [O_F^\times:A^\times]. 
\end{equation}
According to \cite[\S 4.2]{xue-yang-yu:num_inv}, $\varpi\in \{1,3\}$,
and $\varpi=1$  if $p\equiv 1\pmod{8}$. Let $\delta_{3, \varpi}$ be the
  Kronecker $\delta$-symbol, which takes value $1$ if $\varpi=3$, and
  $0$ otherwise.    If $p\equiv 3\pmod{4}$, then thanks to
  Lemma~\ref{lem:p3mod4-spinor-genus}, our formulas are
  separated into two cases according to whether $[P, P_+]\in
  \Pic_+(O_F)^2$ or not.


\begin{thm}\label{thm:Pol-mod}
Let $p$, $r$ and $R_r$ with $r\in \{1, 8, 16\}$ be as  in (\ref{eq:513}). For
each $[P, P_+]\in \Pic_+(R_r)$,  
we list the formulas for following quantities 
\begin{equation*}
h_r^\pmm[P, P_+]:=\abs{\Lambda_r^\pmm(P, P_+)},\quad h_r^\un[P, P_+]:=\abs{\Lambda_r^\un[P, P_+]}, \quad t_r[P,
P_+]:=\abs{\calT_r[P, P_+]}. 
\end{equation*}

\begin{enumerate}
\item If $p=2, 3, 5$, then  
  \begin{equation}
    \label{zeq:98}
    h_1^\pmm[P, P_+]=h_1^\un[P, P_+]=t_1[P,
P_+]=1, \qquad \forall [P, P_+]\in \Pic_+(O_F). 
  \end{equation}
  
\item Suppose that $p\equiv 1\pmod{4}$. 
  Then 
  \begin{align}
    \begin{split}\label{zeq:101}
    h_1^\pmm[P, P_+]&=h_1^\un[P, P_+]=t_1[P,
                      P_+]\\
    &=
    \frac{\zeta_F(-1)}{2}+\frac{h(-p)}{8}+\frac{h(-3p)}{6}\qquad
    \text{for } p>5;
  \end{split}\\[2ex]
     h_8^\pmm[P,
    P_+]&=\frac{3}{2}\left(4-\Lsymb{2}{p}\right)\zeta_F(-1)+\left(2-\Lsymb{2}{p}\right)\frac{h(-p)}{8},\\
     h_8^\un[P, P_+]&=
                      \frac{3}{2\varpi}\left(4-\Lsymb{2}{p}\right)\zeta_F(-1)+\left(2-\Lsymb{2}{p}\right)\frac{h(-p)}{8\varpi}+\frac{\delta_{3,\varpi}}{\varpi}h(-3p), \label{zeq:106}
    \\
  \label{zeq:103}  t_8^\un[P,
    P_+]&=\frac{1}{2}\left(7+2\Lsymb{2}{p}\right)\zeta_F(-1)+\frac{h(-p)}{8}+\left(1-\Lsymb{2}{p}\right)\frac{h(-3p)}{6};\\[2ex]  
\begin{split}\label{zeq:102}
    h_{16}^\pmm[P, P_+]&=h_{16}^\un[P, P_+]=t_{16}[P,
                      P_+]\\
    &=\left(4-\Lsymb{2}{p}\right)\zeta_F(-1)+\frac{h(-p)}{4}+\left(2+\Lsymb{2}{p}\right)\frac{h(-3p)}{6}.
  \end{split}
  \end{align}

  \item Supppose that $p\equiv 3\pmod{4}$ and $p\geq 7$. 
    \begin{enumerate}[label=(\theenumi\alph*)]
    \item If $[P, P_+]\in \Pic_+(O_F)^2$, then
      \begin{align}
        h_1^\pmm[P,
        P_+]&=\frac{\zeta_F(-1)}{2}+\left(11-3\Lsymb{2}{p}\right)\frac{h(-p)}{8}+\frac{h(-3p)}{6},
        \label{zeq:107}\\
        h_1^\un[P, P_+]=t_1[P, P_+]&=\frac{\zeta_F(-1)}{4}+\left(17-\Lsymb{2}{p}\right)\frac{h(-p)}{16}+\frac{h(-2p)}{8}+\frac{h(-3p)}{12}. 
      \end{align}
    \item   If $[P, P_+]\not\in \Pic_+(O_F)^2$, then
      \begin{align}
                h_1^\pmm[P,
        P_+]&=\frac{\zeta_F(-1)}{2}+3\left(1-\Lsymb{2}{p}\right)\frac{h(-p)}{8}+\frac{h(-3p)}{6},\\
        h_1^\un[P, P_+]=t_1[P, P_+]&=\frac{\zeta_F(-1)}{4}+9\left(1-\Lsymb{2}{p}\right)\frac{h(-p)}{16}+\frac{h(-2p)}{8}+\frac{h(-3p)}{12}.\label{zeq:108} 
      \end{align}
    \end{enumerate}
\end{enumerate}
\end{thm}
\begin{proof}
  Let $[X_r, \xi_r]$ be an
  arbitrary member of $\Lambda_r^\pmm(P, P_+)$, and put
  $\calO_r:=\End_{\F_p}(X_r)$.  It follows from (\ref{eq:517}), 
  Remarks~\ref{rem:spinor-class}, and \eqref{eq:90} 
  that
  \begin{equation}
    \label{eq:85}
    h_r^\pmm[P, P_+]=h^1(\calO_r),\quad h_r^\un[P,
    P_+]=h_\scc(\calO_r), \quad t_r[P, P_+]=\abs{\Tp_\sg(\calO_r)}.
  \end{equation}
The following inequalities always hold true: 
  \begin{equation}
    \label{eq:100}
h_r^\pmm[P, P_+]\geq     h_r^\un[P, P_+]\geq t_r[P, P_+]. 
  \end{equation}
  Moreover, if $p\equiv 1\pmod{4}$ and $r\in \{1, 16\}$, all three
  quantities are equal by Corollary~\ref{cor:p1mod4-P-plus-bij}; if
  $p\equiv 3\pmod{4}$ (hence necessarily $r=1$), then
  $h_1^\un[P, P_+]=t_1[P, P_+]$ according to
  Lemma~\ref{lem:p3mod4-spinor-genus}. 

  Already, this explains why the formula in (\ref{zeq:101})
  (resp.~(\ref{zeq:102})) is identical to the one in (\ref{eq:77})
  (resp.~(\ref{eq:92})).  A priori, (\ref{eq:92}) is stated for primes
  $p\equiv 1\pmod{4}$ and $p\geq 13$. Nevertheless, 
  a direct calculation in the case $p=5$ (cf.~(\ref{zeq:104})) shows
  that  it holds for all primes $p\equiv
  1\pmod{4}$. 

  Next, suppose that $p\equiv 1\pmod{4}$ and $r=8$. The formula for
  $h_8^\pmm[P, P_+]=h^1(\calO_8)$ is calculated in
  Proposition~\ref{prop:class-number-O8}.  According to Corollary~\ref{cor:p1mod4-P-plus-bij}, $\calT_8[P, P_+]=\Tp(\calO_8)$. We find
  that $t_8[P, P_+]=t(\calO_8)$, which is calculated in
  \cite[(4.11)]{xue-yu:type_no}. Similar to the $r=16$ case, the
  formula for $t(\calO_8)$ in (\ref{zeq:103}) holds for all primes
  $p\equiv 1\pmod{4}$ (cf. \cite[(4.8)]{xue-yu:type_no}).  To
  calculate $h^\un[P, P_+]$, we claim that its value is independent of
  the choice of $[P, P_+]\in \Pic_+(O_F)$.  Indeed, the class number
  $h_\scc(\calO_8)$ depends only on the spinor genus of $\calO_8$, while the entire genus of
  $\calO_8$ forms a single spinor genus by combining \eqref{eq:90} and Corollary~\ref{cor:p1mod4-P-plus-bij}.
  This verifies the claim and shows that
  \begin{equation}
    \label{eq:105}
    h_8^\un[P, P_+]=\frac{\abs{\Lambda_8^\un}}{\abs{\Pic_+(O_F)}}=\frac{h(\calO_8)}{h_+(F)}=\frac{h(\calO_8)}{h(F)}.
  \end{equation}
 The class number formula for $\calO_8$ has already been
calculated in \cite[(6.8) and (6.10)]{xue-yang-yu:ECNF}. One then use
a result of Herglotz (see \cite[\S 2.10]{xue-yang-yu:num_inv}) to
factor $h(F)$ out  from $h(\calO_8)$ to obtain 
(\ref{zeq:106}).

Now suppose that $p\equiv 3\pmod{4}$. 
It is shown in Lemma~\ref{lem:p3mod4-spinor-genus} that $\calO_1$ belongs to the principal spinor genus of maximal
$O_F$-orders in $D$ if and only if $[P, P_+]\in \Pic_+(O_F)^2$. In particular, $t_1[P,
P_+]=\abs{\Tp^+(D)}$ if $[P, P_+]\in \Pic_+(O_F)^2$, and  $t_1[P,
P_+]=\abs{\Tp^{-}(D)}$ otherwise. The class number formulas for
$h^1(\calO_1)$ (for both the principal and nonprincipal cases) and type number formulas for $\abs{\Tp^+(D)}$
and $\abs{\Tp^{-}(D)}$ are  produced in Proposition~\ref{prop:p=3mod4}. This takes care of (\ref{zeq:98}) for $p=3$ and
the formulas (\ref{zeq:107})--(\ref{zeq:108}) for $p\equiv 3\pmod{4}$
and $p>7$. 


Lastly, if $p\in \{2, 5\}$, then $\Pic_+(O_F)$ consists a single member,
namely $[O_F,
O_{F, +}]$. In light of Corollary~\ref{cor:prin-polar-module},
(\ref{zeq:98}) is just a restatement of (\ref{eq:76}) (resp.~(\ref{eq:99})) in
the case $p=2$ (resp.~$p=5$).
\end{proof}

\begin{rem}\label{rem:arith-genus}
  We would like to exhibit some identities relating the following
three kinds of  objects:
  \begin{itemize}
  \item the arithmetic genera of
    some Hilbert modular surfaces;
    \item   the class numbers
      $h_1^\pmm[P, P_+]$, $h_1^\un[P, P_+]$ and type numbers
      $t_1[P, P_+]$;
     \item the proper class numbers of quaternary positive
       definite even quadratic lattices within certain genera. 
      \end{itemize}

Let $\GL^+_2(F)$ be the subgroup of $\GL_2(F)$ consisting of the elements with totally positive determinants, and $\PGL_2^+(F)$ be its canonical image in $\PGL_2(F)$. 
  For each nonzero fractional ideal $\gra$ of $O_F$, we define three arithmetic subgroups 
  \[ \Gamma(O_F\oplus \gra)\subseteq \wt \Gamma(O_F\oplus \gra) \subseteq \Gamma_m(O_F\oplus \gra) \]
  of $\PGL_2^+(F)$ as follows.
  We write $\SL(O_F\oplus \gra)$ for the stabilizer of $O_F\oplus \gra$
  in $\SL_2(F)$, and $\Gamma(O_F\oplus \gra)$ for its image in
  $\PGL_2^+(F)$. Let $K_\gra\subset \GL_2(\whF)$ be the stabilizer of the $\wh O_F$-lattice $\wh O_F \oplus \wh \gra$, and let $\wt \Gamma(O_F\oplus \gra)\subseteq \PGL_2^+(F)$ be the image of the subgroup $K_\gra\cap \GL_2^+(F)$.
  We refer the definition of the Hurwitz-Maass extension $\Gamma_m(O_F\oplus \gra)$ of $\Gamma(O_F\oplus \gra)$ to \cite[\S
  I.4]{van-der-Geer-HMS}. 
  It is shown that $\Gamma_m(O_F\oplus \gra)$ is the maximal discrete subgroup of $\PGL_2^+(\R)^{[F:\Q]}$ that contains $\wt \Gamma(O_F\oplus \gra)$; see \cite[p.~13]{van-der-Geer-HMS}.
  One can show that this is the same as the image of the subgroup $\GL_2^+(F)\cap \calN(K_\gra)$ in $\PGL_2^+(F)$, where $\calN(K_\gra)\subseteq \GL_2(\whF)$ is the normalizer of $K_\gra$. 
  Up to conjugation in $\PGL_2^+(F)$,
  these three groups depend only on the Gauss genus $\gamma\coloneqq \Pic_+(O_F)^2[\gra]_+\in
  \Pic_+(O_F)/\Pic_+(O_F)^2$ represented by $\gra$.    
  
  Let $\calH$ be
  the upper half plane of $\bbC$ as usual. 
  For $\Gamma=\Gamma(O_F\oplus \gra)$ or
  $\Gamma_m(O_F\oplus \gra)$, the Hilbert modular surface $Y_\Gamma$
  is defined to be the minimal
  non-singular model of the compactification of $\Gamma\bsh \calH^2$
  \cite[\S II.7]{van-der-Geer-HMS}.  The arithmetic genus
  $\chi(Y_\Gamma)$ is an important global invariant of $Y_\Gamma$ 
  (see \cite[\S VI]{van-der-Geer-HMS} and \cite[\S
  II.4--5]{Freitag-HMF}).  Since  both $Y_{\Gamma(O_F\oplus \gra)}$
  and $Y_{\Gamma_m(O_F\oplus \gra)}$ depend only on the Gauss genus
  $\gamma=\Pic_+(O_F)^2[\gra]_+$, we denote them as $Y(\grd_F,
  \gamma)$ and $Y_m(\grd_F, \gamma)$  respectively,  where $\grd_F$ is the discriminant of $\Q(\sqrt{p})$ as in
    Remark~\ref{rem:Bernoulli}.  Similarly, let us put
  \begin{equation}
    \label{eq:162}
  h_1^\pmm(\gamma)\coloneqq h_1^\pmm[\gra, \gra_+], \quad
  h_1^\un(\gamma)\coloneqq h_1^\un[\gra, \gra_+], \quad
  t_1(\gamma)=t_1[\gra, \gra_+], 
  \end{equation}
  where $\gra$ is equipped  with the canonical notion of
  positivity $\gra_+$ (see \S\ref{sec:PP.1}). Lastly, according to 
  \cite{chan-peters}, the genera of quaternary positive
       definite even quadratic lattices of discriminant $\grd_F$ can be labeled by the
       Gauss genus group $\Pic_+(O_F)/\Pic_+(O_F)^2$. Let $H^+(\grd_F,
       \gamma)$ denote the proper class number of such lattices
       in the genus labeled by $\gamma$ (cf.~\cite[\S3.2]{chan-peters}). 

       The formulas for $\chi(Y(\grd_F, \gamma))$ can be found in
       \cite[Theorems~II.5.8--9]{Freitag-HMF}, and those for
       $\chi(Y_m(\grd_F, \gamma))$ and $H^+(\grd_F, \gamma)$ can be found in
       \cite[\S1]{chan-peters}.  Comparing these formulas with the
       ones in Theorem~\ref{thm:Pol-mod}, we immediately obtain several
interesting       identities (\ref{eq:163})--(\ref{eq:165}) described below.

First supose that $p\not\equiv 3\pmod{4}$. Then there is a unique Gauss genus
$\gamma$, and hence a unique genus of quaternary positive
       definite even quadratic lattices of discriminant $\grd_F$. The two groups $\Gamma_m(O_F\oplus \gra)$ and
$\Gamma(O_F\oplus \gra)$ coincide by \cite[\S I.4,
p.~13]{van-der-Geer-HMS}, which implies that $Y(\grd_F,
\gamma)=Y_m(\grd_F, \gamma)$.  In this case we have 
\begin{equation}
  \label{eq:163}
\chi(Y(\grd_F, \gamma))=\chi(Y_m(\grd_F,
\gamma))=h_1^\pmm(\gamma)=h_1^\un(\gamma)=t_1(\gamma)=H^+(\grd_F, \gamma).
\end{equation}
The coincidence of the arithmetic genus $\chi(Y(\grd_F, \gamma))$ with
$H^+(\grd_F, \gamma)$ has been observed by many authors \cite{vigneras:inv,
  Kramer-1988Ann, chan-peters}. 

Now suppose that $p\equiv 3\pmod{4}$. There are two Gauss genera, so we denote the principal
Gauss genus by $\gamma^+$, and the nonprincipal one by $\gamma^-$. In
this case
\begin{equation}
\chi(Y(4p, \gamma^+))=h_1^\pmm(\gamma^-), \qquad  \chi(Y(4p, \gamma^-))=h_1^\pmm(\gamma^+).
\end{equation}
Moreover, 
\begin{align}
  \chi(Y_m(4p, \gamma^+))&=h_1^\un(\gamma^-)=t_1(\gamma^-)=H^+(4p,\gamma^+),\\
                                         \chi(Y_m(4p,
                                          \gamma^-))&=h_1^\un(\gamma^+)=t_1(\gamma^+)=H^+(4p,\gamma^-). \label{eq:165}
\end{align}
Once again, the coincidence between the arithmetic genus $\chi(Y_m(4p,
\gamma))$ and the proper class number $H^+(4p, \gamma)$ has been 
observed by Chan and Peters \cite{chan-peters}.




\end{rem}

\section{The class and type number formulas for maximal orders}
\label{sec:max-ord}
Throughout this section, $p\in \bbN$ denotes a prime number,
$F=\Q(\sqrt{p})$, and $D=D_{\infty_1, \infty_2}$ denotes the totally
definite quaternion $F$-algebra that splits at all finite primes of
$F$. All maximal $O_F$-orders in $D$ form a single genus. Let $\Tp(D)$
be the type set of maximal orders in $D$, and $\bbO\subset D$ be an
arbitrary maximal order. The goal of this section is to compute the
following class numbers
\begin{equation}
  \label{eq:3}
  h^1(\bbO)\coloneqq \abs{D^1\bsh \whD^1/\wh\bbO^1}, \qquad
  h_\scc(\bbO)\coloneqq \abs{\Cl_\scc(\bbO)}=\abs{D^\times\bsh
    \big(D^\times\whD^1\wh\bbO^\times\big)/\wh\bbO^\times}, 
\end{equation}
where $\whD^1$ denotes the reduced norm one subgroup of $\whD^\times$,
and $D^1, \wh\bbO^1$ are defined similarly. 
Moreover,  $\Cl_\scc(\bbO)$ denotes the  set of locally principal right
$\bbO$-ideal classes within the spinor class of principal ideal 
$\bbO$ itself as defined inside Remark~\ref{rem:spinor-class}.  
We first show that in our case the
class number $h_\scc(\bbO)$ coincides with the type number of maximal
orders within the
spinor genus of $\bbO$.


\begin{lem}\label{lem:bijection-prin-type}
Let $\Tp_\sg(\bbO)$ be the subset of $\Tp(D)$ consisting of types
of maximal orders in the spinor genus of $\bbO$ as in
\eqref{eq:90}.  Then the following map is bijective:
\begin{equation}
  \label{eq:236}
\Upsilon:   \Cl_\scc(\bbO)\to \Tp_\sg(\bbO), \quad [I]\mapsto
  \dbr{\calO_l(I)}. 
\end{equation}
\end{lem}
\begin{proof}
  Adelically, this map is given by the canonical projection 
  \begin{equation}
    \label{eq:237}
 D^\times\bsh \big(D^\times\whD^1
    \wh\bbO^\times\big)/\wh\bbO^\times\to  D^\times\bsh \big(D^\times\whD^1
    \calN(\wh\bbO)\big)/\calN(\wh\bbO),   
  \end{equation}
  where $\calN(\wh\bbO)$ denotes the normalizer of $\wh\bbO$ in
  $\whD^\times$. 
 Since $D$ splits at all the finite primes of $F$ and $\bbO$ is maximal, we
have $\calN(\wh\bbO)=\whF^\times\wh\bbO^\times$ as in \eqref{eq:41}.
Thus to show that the map
in (\ref{eq:237}) is bijective, it is enough to show that the following
$D^\times$-equivariant 
 surjective canonical projection map 
\begin{equation}
  \label{eq:238}
\big(D^\times\whD^1
    \wh\bbO^\times\big)/(F^\times\wh\bbO^\times)\twoheadrightarrow \big(D^\times\whD^1
    \calN(\wh\bbO)\big)/\calN(\wh\bbO)
  \end{equation}
  is injective as well.  Since $\whD^1$ is normal in
  $\whD^\times$ with an abelian quotient $\whD^\times/\whD^1\simeq \whF^\times$, both $D^\times\whD^1
    \wh\bbO^\times$ and $D^\times\whD^1
    \calN(\wh\bbO)$ are subgroups of $\whD^\times$. 
  
According to
\cite[Corollary~18.4]{Conner-Hurrelbrink}, $h(F)=h(\Q(\sqrt{p}))$ is
odd for every prime $p$, so
$F_+^\times\cap \whF^{\times2}\whO_F^\times=
F^{\times2}O_{F,+}^\times$, where $F_+^\times$ denotes the subgroup of
totally positive elements of $F^\times$, and $O_{F, +}^\times$ denotes the group
 of totally positive units. Indeed, given $\alpha\in F_+^\times\cap
\whF^{\times2}\whO_F^\times$, the principal ideal $\alpha O_F$ is a
perfect square, that is,  $\alpha O_F=\grb^2$ for some fractional
$O_F$-ideal $\grb\subseteq F$. Now it follows from the class number parity statement
above that $\grb$ is principal as well. Write $\grb=\beta O_F$ for some
$\beta\in F^\times$. Then $\alpha=\beta^2u$ for some  $u\in O_{F,
  +}^\times$.

Thus if $x\in D^\times\whD^1\wh\bbO^\times\cap
  \whF^\times\wh\bbO^\times$, then $\Nr(x)\in \big(
  F_+^\times\whO_F^\times\cap
  \whF^{\times2}\whO_F^\times\big)=F^{\times 2}\whO_F^\times$, and hence
  there exists $a\in F^\times$ such that $\Nr(a x)\in \whO_F^\times$.
On the other hand, any element 
$y\in \whF^\times\wh\bbO^\times$ with $\Nr(y)\in \whO_F^\times$
necessarily lies in $\wh\bbO^\times$.  It follows that 
\[ D^\times\whD^1\wh\bbO^\times\cap
  \whF^\times\wh\bbO^\times=F^\times\wh\bbO^\times, \]
which in turn implies the injectivity of (\ref{eq:238}). The lemma is proved. 
\end{proof}

As it turns out, if $p\not\equiv 3\pmod{4}$ then the computations of both
$h^1(\bbO)$ and $h_\scc(\bbO)$ are relatively easy because we have even more coincidences of
class numbers and type numbers. 
Recall that  $\zeta_F(s)$ denotes the Dedekind zeta function of the
quadratic real field $F=\Q(\sqrt{p})$, and  $h(d)$ denotes the class number of the quadratic field
$\Q(\sqrt{d})$ for each square-free integer
$d\in \bbZ$.

\begin{prop}\label{prop:CNF-p=1mod4}
  If $p\not\equiv 3\pmod{4}$, then
  \begin{equation}
    \label{eq:151}
    h^1(\bbO)=  h_\scc(\bbO)=t(\bbO)=h(\bbO)/h(F),  
\end{equation}
where $t(\bbO)\coloneqq\abs{\Tp(D)}$ denotes the type number of
$\bbO$, and $h(\bbO)\coloneqq \abs{\Cl(\bbO)}$ denotes the class
number of $\bbO$. In particular, 
\begin{gather}
    h^1(\bbO)=h_\scc(\bbO)=1 \qquad \text{if}\quad
  p\in \{2, 5\}, \quad\text{and }    \label{eq:152}\\
  h^1(\bbO)=h_\scc(\bbO)=\frac{\zeta_F(-1)}{2}+\frac{h(-p)}{8}+\frac{h(-3p)}{6}\quad 
  \text{if }  p\equiv 1(\bmod{4})\text{ and } p>5.   \label{eq:97}
\end{gather}
\end{prop}
\begin{proof}
  From Lemma~\ref{lem:bijection-prin-type},
  $h_\scc(\bbO)=\abs{\Tp_\sg(\bbO)}=t(\bbO)$, where the last equality
  follows from the fact that all maximal orders in $D$ belong to the
  same spinor genus in this case by
  Proposition~\ref{prop:spinor-gen-classif}. The equality
  $t(\bbO)=h(\bbO)/h(F)$ has been proved in
  \cite[Proposition~4.1]{xue-yu:type_no}, and we shall prove the
  equality $h^1(\bbO)=t(\bbO)$ in Lemma~\ref{lem:bije-eichler-p1mod4}.
The formulas in  \eqref{eq:152} and \eqref{eq:97} are reproduced from
the type number formulas in \cite[(4.5) and (4.8)]{xue-yu:type_no}. In
light of the class-type number relationship in (\ref{eq:151}), these formulas are not
new. See \S\ref{sect:end-ring} for a brief historical note. 
\end{proof}
  




The computations for $h^1(\bbO)$ and $h_\scc(\bbO)$ when
$p\equiv 3\pmod{4}$ are made difficult by the existence of \emph{two}  spinor
genera of maximal orders in $D$ according to
Proposition~\ref{prop:spinor-gen-classif}. A systematic study of these class
number formulas were carried out by the current authors in
\cite{xue-yu:spinor-class-no}.  For the moment let $\calF$ be a
totally real number field, $\calD$ be a totally definite quaternion
$\calF$-algebra, and $\calO\subset\calD$ be an Eichler order.  We  are
able to derive 
in \cite[Corollary~4.2]{xue-yu:spinor-class-no} a class number
formula for $h^1(\calO)\coloneqq\abs{\calD^1\bsh \wh\calD^1/\wcO^1}$ using the
Selberg trace formula. The proof of this formula is  greatly simplified by Voight
\cite[Appendix A]{xue-yu:spinor-class-no}, who gives an alternative
proof  by replacing calculations with
the Selberg trace formula with a direct, conceptual argument. A
formula for
$h_\scc(\calO)\coloneqq\abs{\calD^\times\bsh
  \big(\calD^\times\wh\calD^1\wcO^\times\big)/\wcO^\times}$ is also
obtained in \cite[Theorem~3.2]{xue-yu:spinor-class-no} by refining the
original proof of the classical Eichler class number formula
\cite[Corollaire V.2.5,
p. 144]{vigneras}\cite[Theorem~30.8.6]{voight-quat-book}.  However,
different from Eichler's formula, which depends purely on the
local datum of $\calO$ (namely, the genus of $\calO$), the formulas
for both $h^1(\calO)$ and $h_\scc(\calO)$ depend more subtly on the
spinor genus of $\calO$, which has a mixture of local and global
flavor; see Definition~\ref{defn:spinor-genus-class}.  In particular,
our formulas require certain new invariants arising from the theory of
\emph{optimal spinor selectivity}, which we recall briefly below.

\subsection{Optimal spinor selectivity}
\label{subsec:oss}
\numberwithin{thmcounter}{subsection}
The selectivity theory is first formulated by Chinburg and Friedman
\cite{Chinburg-Friedman-1999} as an refined integral version of the
Hasse-Brauer-Noether-Albert Theorem \cite[Theorem~III.3.8]{vigneras}, 
and has since been further developed by many people. See
\cite[\S31.7.7]{voight-quat-book} for a brief historical account.  We
shall follow closely the exposition in
\cite{Xue-Yu-Selec-2022}, which is most
suitable for the current purpose. 


 Let $\calF, \calD$ be as above, and $\grn\subseteq O_\calF$ be an integral ideal coprime to the
discriminant of $\calD$.   All Eichler orders of level $\grn$ in
$\calD$ form a single genus $\scrG_\grn$. 
 The spinor genus of $\calO$
is denoted by $\dbr{\calO}_\sg$. Let $\SG(\scrG_\grn)$ be the set of
spinor genera within $\scrG_\grn$. In other words,
$\SG(\scrG_\grn)=\{\dbr{\calO}_\sg \mid \calO\in \scrG_\grn\}$. 


Since $\calD$ is totally definite, a quadratic field extension
$\calK/\calF$ embeds into $\calD$ only if $\calK/\calF$ is a
CM-extension.  An $O_\calF$-order $B$ (of full rank) in a CM-extension
of $\calF$ will be called a \emph{CM $O_\calF$-order}.  Given a CM
$O_\calF$-order $B$ with fractional field $\calK$ and an Eichler order
$\calO\in \scrG_\grn$, we write
$\Emb(B, \calO)$ for the set of \emph{optimal embeddings} of $B$ into
$\calO$, that is
\begin{equation}
  \label{eq:1-22}
  \Emb(B, \calO):=\{\varphi\in \Hom_\calF(\calK, \calD)\mid \varphi(\calK)\cap \calO=\varphi(B)\}.
\end{equation}
Similarly, at each finite prime $\grp$ of $\calF$, we write
$\Emb(B_\grp, \calO_\grp)$ for the set of optimal embeddings of
$B_\grp$ into $\calO_\grp$.  Clearly, if $\Emb(B, \calO)\neq \emptyset$, then 
\begin{equation}\label{eq:local-emb-cond}
  \Emb(B_\grp,
\calO_\grp)\neq \emptyset \quad\text{for every finite prime $\grp$ of $\calF$}.
\end{equation}
Conversely, if condition \eqref{eq:local-emb-cond} holds\footnote{A concrete criterion for the existence of local
  optimal embeddings into Eichler orders is given by Guo and Qin in
  \cite[Lemma~2.2]{Guo-Qin-embedding-Eichler-JNT2004}, which in
  turn is based on a theorem of Brzezinski
  \cite[Theorem~1.8]{Brzezinski-embed-num-1991}. In our case,
  condition \eqref{eq:local-emb-cond} holds automatically  for every maximal $O_F$-order
  $\bbO\subseteq D_{\infty_1, \infty_2}$ and every $O_F$-order $B$
  in a CM-extension of $F=\Q(\sqrt{p})$ by 
\cite[Theoreme~II.3.2]{vigneras}. } for one and hence all orders in $\scrG_\grn$,  then there exists
an order $\calO_0\in \scrG_\grn$ such that
$\Emb(B, \calO_0)\neq \emptyset $ by
\cite[Corollary~30.4.18]{voight-quat-book}. However, in the totally
definite case, it does not make sense to expect $\Emb(B, \calO')\neq
\emptyset $ for every order $\calO'\in \scrG_\grn$; see
\cite[Example~2.1]{Xue-Yu-Selec-2022}. Instead, one asks whether
condition \eqref{eq:local-emb-cond} implies the existence of 
an order $\calO'$ in  every spinor genus of $\scrG_\grn$ such that $\Emb(B, \calO')\neq
\emptyset $.  It turns out that while the answer is positive for most
CM $O_\calF$-orders and Eichler orders, there exist some particular cases
where the answer is negative. 
The selectivity theory is developed to give a concrete
criterion for when the answer is negative.

Let us first define 
the \emph{optimal
    spinor selectivity symbol} as follows
\begin{equation}
  \label{eq:6}
  \Delta(B, \calO)=
  \begin{cases}
    1 \qquad &\text{if } \exists\, \calO'\in \dbr{\calO}_\sg\text{ such that }\Emb(B, \calO')\neq \emptyset, \\
    0 \qquad &\text{otherwise}.
  \end{cases}
  \end{equation}

\begin{defn}\label{defn:oss}
  Suppose that condition \eqref{eq:local-emb-cond} holds.  We say $B$ is
  \emph{optimally spinor selective} (selective for short) for the
  genus $\scrG_\grn$ if $\Delta(B, \calO)=0$ for some (but not
  all)
  $\dbr{\calO}_\sg\in \SG(\scrG_\grn)$. If $B$ is selective for
  $\scrG_\grn$, then a spinor genus $\dbr{\calO}_\sg$ with
  $\Delta(B, \calO)=1$ is said to be \emph{selected} by $B$.
\end{defn}

To state the main theorem of optimal spinor selectivity, we need  some more notation. 
For each finite prime $\grp$ of $O_\calF$, we write
$\nu_\grp: \calF^\times\twoheadrightarrow \Z$ for the normalized discrete
valuation of $\calF$ attached to $\grp$. Let $\grd(\calO)$ be the
\emph{reduced discriminant} of $\calO\in \scrG_\grn$, which is
 the product of $\grn$ with all the finite ramified primes of
$\calD/\calF$. Given a CM $O_\calF$-order $B$ with fractional field
$\calK$, we write $\grf(B)$ for the \emph{conductor} of $B$. In other
words, $\grf(B)$ is the unique integral ideal of $O_\calF$ such that
$B=O_\calF+\grf(B)O_\calK$.  If $B'$ is another $O_\calF$-order in
$\calK$, then we put $\grf(B'/B)\coloneqq \grf(B')^{-1}\grf(B)$ and
call it the \emph{relative conductor} of $B$ with respect to $B'$,
which is now a fractional ideal of $O_\calF$. 
The following theorem is a combination of  \cite[Theorem~2.15,
Lemma~2.17, and
Theorem~3.2]{Xue-Yu-Selec-2022}. 

\begin{thm}\label{thm:selectivity}
Let $\scrG_\grn$ be the genus of Eichler orders of level $\grn$ in the
totally definite quaternion $\calF$-algebra $\calD$, and $B$ be a CM
$O_\calF$-order with fractional field $\calK$. Suppose that condition
\eqref{eq:local-emb-cond} holds for $B$ and $\scrG_\grn$. Then
\begin{enumerate}[label=(\arabic*)]
\item $B$ is
optimally spinor selective for the genus $\scrG_\grn$ if and only if
both of the following conditions hold:
 \begin{enumerate}[label=(\alph*)]
  \item both $\calK$ and  $\calD$ are unramified
  at every finite prime $\grp$ of $\calF$;
  \item $\calK/\calF$ splits at every finite prime $\grp$ of $\calF$
    satisfying $\nu_\grp(\grd(\calO))\equiv
  1\pmod{2}$.  
\end{enumerate}
\item If $B$ is optimally spinor selective for $\scrG_\grn$, then
$\Delta(B, \calO)=1$ for   exactly half of the spinor genera
$\dbr{\calO}_\sg\in \SG(\scrG_\grn)$.
\item Suppose that $\calK$ and $\calD$ satisfy both of the conditions  in
  part (1) of the theorem so that $B$ is optimally spinor selective
  for $\scrG_\grn$. Let $B'$ be another $O_\calF$-order in $\calK$
  satisfying condition \eqref{eq:local-emb-cond}. Then for every $\calO\in \scrG_\grn$, we have 
  \begin{equation}
    \label{eq:11}
        \Delta(B, \calO)=\big(\grf(B'/B),
    \calK/\calF\big)+\Delta(B', \calO),  
  \end{equation}
where $\big(\grf(B'/B),
    \calK/\calF\big)\in \Gal(\calK/\calF)$ is the Artin symbol \cite[\S
    X.1]{Lang-ANT}, and  the 
  summation on the right hand side of \eqref{eq:11} is taken inside $\zmod{2}$ with the canonical
  identification $\Gal(\calK/\calF)=\zmod{2}$.
\end{enumerate}
\end{thm}

In fact, if $B$ is selective for the genus $\scrG_\grn$, then starting
from a known selectivity symbol $\Delta(B, \calO)$ for some
$\calO\in \scrG_\grn$, we can express any other $\Delta(B, \calO')$
with $\calO'\in \scrG_\grn$ in terms of $\Delta(B, \calO)$ and the
``relative position'' of $\dbr{\calO}_\sg$ and $\dbr{\calO'}_\sg$ in
$\SG(\scrG_\grn)$. See \cite[Theorem~2.15]{Xue-Yu-Selec-2022} or
\cite[\S31.1.9]{voight-quat-book} for more details.  In general, for
an arbitrary CM $O_\calF$-order $B$ with fractional field $\calK$, we
put
\begin{equation}\label{eq:142}
s(B, \calO):=  \begin{cases}
    1 & \text{if $\calK$,  $\calD$ satisfy  conditions (a) and (b) in
      Theorem~\ref{thm:selectivity},}\\
    0 & \text{otherwise.} 
  \end{cases}
\end{equation}
By definition, $s(B, \calO)$ depends only on  $\calK, \calD$ and
$\grn$.


Theorem~\ref{thm:selectivity} was first obtained by Maclachlan 
\cite{Maclachlan-selectivity-JNT2008} for Eichler orders of
square-free levels in indefinite quaternion algebras. Independently, Arenas et~al.\
\cite{M.Arenas-et.al-opt-embed-trees-JNT2018} and Voight
\cite[Chapter~31]{voight-quat-book} removed the square-free condition
and obtained the first two parts of the theorem for Eichler orders of
arbitrary levels. The generalization of \eqref{eq:11} to Eichler
orders of arbitrary levels are due to the current authors
\cite{Xue-Yu-Selec-2022}.    This concludes our brief account of the optimal spinor selectivity theory.

\subsection{The class number formulas for $h^1(\bbO)$ and $h_\scc(\bbO)$ when $p\equiv
   3\pmod{4}$}
We return to the original setup  where $F=\Q(\sqrt{p})$, and $\bbO$ is a maximal
$O_F$-order in the quaternion $F$-algebra $D=D_{\infty_1,
  \infty_2}$. Assume that $p\equiv 3\pmod{4}$ for the rest of this
section unless specified otherwise. 

Let $O_{F, +}^\times\coloneqq F_+^\times\cap O_F^\times$ be the
group of totally positive units of $F$, and $\varepsilon>1$ be the
fundamental unit of $O_F^\times$. By \cite[\S 11.5]{Alaca-Williams-intro-ANT}
or \cite[Corollary~18.4bis]{Conner-Hurrelbrink}, $\varepsilon$ is
totally positive (i.e.~$\Nm_{F/\Q}(\varepsilon)=1$) since 
$p\equiv 3\pmod{4}$. Hence $O_{F,+}^\times= \dangle{\varepsilon}$ while $O_F^{\times2}=\dangle{\varepsilon^2}$, from which it follows that
\begin{equation}
  \label{xeq:1}
  [O_{F,+}^\times:O_F^{\times 2}]=2. 
\end{equation}
According to \cite[Lemma~11.6]{Conner-Hurrelbrink}, the narrow class number
$h_+(O_F)$ is related to the (wide) class number $h(F)$ by
$h_+(F)=h(F)  [O_{F,+}^\times:O_F^{\times 2}]$, and hence 
\begin{equation}
  \label{eq:140}
h_+(F)  =2h(F).
\end{equation}
From \cite[Corollary~18.4]{Conner-Hurrelbrink}, $h(F)$ is
odd.



Given a CM $O_F$-order $B$, we write $\bmu(B)$ for the subgroup of roots
of unity of $B^\times$. 
Let $h(B)$ be the class
number of $B$, and put 
\begin{equation}
  \label{eq:57}
  w(B)\coloneqq [B^\times:
 O_F^\times]. 
\end{equation}
According to \cite[Remarks, p.~92]{Pizer1973}
(cf.~\cite[\S3.1]{li-xue-yu:unit-gp} and
\cite[\S3.3]{xue-yang-yu:ECNF}), there exist only finitely many CM
$O_F$-orders $B$ with $w(B)>1$, so we collect them into a finite
set $\scrB$. The CM $O_F$-orders $B$ with $\abs{\bmu(B)}>2$ form a
subset of $\scrB$ which will be denoted by $\scrB^1$. 


Applying  \cite[Corollary~4.2 and (3.13)]{xue-yu:spinor-class-no} to
the maximal order $\bbO\subseteq D$, we obtain
  \begin{equation}
    \label{xeq:214}
    h^1(\bbO)=\frac{1}{2}\zeta_F(-1)+\frac{1}{4h(F)}\sum_{B\in
      \scrB^1}2^{s(B, \bbO)}\Delta(B, \bbO)(\abs{\bmu(B)}-2)h(B)/w(B).
  \end{equation}
Similarly, combining \eqref{xeq:1}, \eqref{eq:140} with  \cite[Theorem~3.2 and
(3.8)]{xue-yu:spinor-class-no}, we obtain
\begin{equation}
  \label{eq:139}
  h_\scc(\bbO)=\frac{\zeta_F(-1)}{4}+ \frac{1}{4h(F)}\sum_{B\in
      \scrB}2^{s(B, \bbO)}\Delta(B, \bbO)\left(1-\frac{1}{w(B)}\right)h(B). 
\end{equation}


Note that the summations in \eqref{xeq:214} and \eqref{eq:139} range
over $\scrB^1$  and $\scrB$ respectively.  It remains to classify
these CM $O_F$-orders and work out the respective invariants. 
According to \cite[(4.4) and \S7.2]{li-xue-yu:unit-gp} or
\cite[\S2.8]{xue-yang-yu:num_inv}, if $p\geq 7$ and $B\in \scrB$, then
the fractional field $K\coloneqq\Frac(B)$ coincides with $F(\sqrt{-1})$, $F(\sqrt{-2})$
or $F(\sqrt{-3})$, where $F(\sqrt{-2})=F(\sqrt{-\varepsilon})$ by \cite[Proposition~2.6]{xue-yang-yu:num_inv}. Moreover, $K\in \{F(\sqrt{-1}), F(\sqrt{-3})\}$ if
 $B\in \scrB^1$.  
The prime $p=3$ will be treated
 separately. For simplicity, we put
\begin{equation}
K_m\coloneqq F(\sqrt{-m})\quad\text{for}\quad  m\in \{1,2, 3\}.   
\end{equation}
We make no use of $2$-adic or $3$-adic completions for the rest
of this section, so the notation should cause no confusion.  


By definition, the quaternion $F$-algebra $D=D_{\infty_1, \infty_2}$
is unramified at all the finite primes of $F$, so part of condition
(a) in Theorem~\ref{thm:selectivity} is already satisfied.  Moreover, condition
(b) is vacuous in this
case since the reduced discriminant $\grd(\bbO)=O_F$ as $\bbO$ is maximal. On the other hand,
recall from \cite[\S 15G]{Cohn-invitation-Class-Field}
(cf.~\cite[\S6]{Cox-Primes}) that the \emph{strict genus field}
$\Sigma/F$ is defined to be the maximal unramified\footnote{Here by ``unramified''
  we mean ``unramified at all the finite primes of $F$''. } elementary
abelian $2$-extension of $F$. The Artin reciprocity map induces an
isomorphism between the Gauss genus group
$\grG(O_F)=\Pic_+(O_F)/\Pic_+(O_F)^2$ and the Galois group of
$\Sigma/F$:
\begin{equation}
  \label{eq:145}
 \grG(O_F)\cong\Gal(\Sigma/F), \qquad   \Pic_+(O_F)^2[\gra]_+\mapsto
 (\gra, \Sigma/F). 
\end{equation}
From \eqref{xeq:26x}, $\abs{\grG(O_F)}=2$ since
$p\equiv 3\pmod{4}$. 
 One checks directly that 
$F(\sqrt{-1})/F$ is unramified for $p\equiv 3\pmod{4}$, so it
coincides with $\Sigma$ and is the unique unramified quadratic
extension of $F$. According to Theorem~\ref{thm:selectivity}, an CM
$O_F$-order $B\in \scrB$ is selective for the genus of maximal orders
of $D$
if and only if its fractional field is $K_1=F(\sqrt{-1})$, so we have 
\begin{equation}
  \label{eq:141}
s(B, \bbO)=
  \begin{cases}
   1  \qquad &\text{if } \Frac(B)=F(\sqrt{-1}),\\
0\qquad &\text{otherwise.}
  \end{cases}  
\end{equation}



To apply part (3) of Theorem~\ref{thm:selectivity}, we need a more
concrete description of the Artin reciprocity map in \eqref{eq:145}
for $\Sigma=K_1$. 
For every narrow ideal class in $\Pic_+(O_F)$, there is an integral
ideal $\gra\subseteq O_F$ coprime to $p$ representing this class.  If
we identify $\Gal(K_1/F)$ with the multiplicative group $\{\pm 1\}$,
then the canonical map $\Pic_+(O_F)\to \grG(O_F)\simeq \Gal(K_1/F)$ is
identified with the unique nontrivial \emph{genus character}
\cite[\S14G, p.~150]{Cohn-invitation-Class-Field}
\begin{equation}
  \label{eq:y3}
 \chi: \Pic_+(O_F)\to \{\pm 1\},
\end{equation}
which can be  expressed in
terms of the Kronecker symbol \cite[\S10D]{Cohn-invitation-Class-Field} as follows: 
\begin{equation}
  \chi([\gra]_+)=\Lsymb{-p}{\Nm(\gra)},\qquad \forall\, [\gra]_+\in
  \Pic_+(O_F)\quad \text{with} \gcd(\gra, p)=1. 
\end{equation}
For example, if $\grq$ is the unique dyadic prime of $F$, then 
\begin{equation}
  \label{eq:4}
(\grq, K_1/F)= \chi([\grq]_+)=\Lsymb{-p}{2}=\Lsymb{2}{p}=
  \begin{cases}
   1 &\text{if } p\equiv 7\pmod{8},    \\
   -1 &\text{if } p\equiv 3\pmod{8}.    
  \end{cases}
\end{equation}
This can also be re-interpreted as follows. By \cite[Lemma~3]{MR1344833} or
\cite[Lemma~3.2(1)]{MR3157781}, there exists $\vartheta\in F^\times$ such
that $\varepsilon=2\vartheta^2$, so $\grq=2\vartheta O_F$.
In particular, $[\grq]_+$ is a $2$-torsion in $\Pic_+(O_F)$, and it has order $2$ if and only if
$\Nm_{F/\Q}(\vartheta)<0$.  
From
(\ref{eq:4}), we have $\Nm_{F/\Q}(\vartheta)=\frac{1}{2}\Lsymb{2}{p}$
(cf.~the proof of \cite[Lemma~6.2.6]{li-xue-yu:unit-gp}).

From Proposition~\ref{prop:spinor-gen-classif}, 
 there are two spinor genera of maximal $O_F$-orders in
$D$ in this case,  which leads to a decomposition of the type set 
as follows: 
\begin{equation}\label{eq:149}
  \Tp(D)=\Tp^+(D)\amalg \Tp^{-}(D).   
\end{equation}
A maximal order $\bbO$ in
  $D$ is said to belong to the \emph{principal spinor genus} if
  $\dbr{\bbO}\in \Tp^+(D)$,   otherwise it is said to belong to the
  \emph{nonprincipal spinor genus}.
Following Example~\ref{ex:res-p3mod4}, if we present $D$ as
$\qalg{-1}{-1}{F}$ and write $\{1, i, j, k\}$ for its standard basis, then
$\Tp^+(D)=\Tp_\sg(\bbO_0)$, where 
\begin{equation}
  \label{eq:146}
  \bbO_0=O_F+O_Fi+O_F\frac{\sqrt{p}+j}{2}+O_F\frac{\sqrt{p}i+k}{2}\subset
\qalg{-1}{-1}{F}. 
\end{equation}

  \begin{prop}\label{prop:p=3mod4}
    Suppose that $p\equiv 3\pmod{4}$. Let $\bbO$ (resp.~$\bbO'$) be a
    maximal $O_F$-order in $D$ belonging to the principal
    (resp.~nonprinicipal) spinor genus. If $p=3$, then
    \begin{equation}
      \label{eq:148}
      h^1(\bbO)=h_\scc(\bbO)=\abs{\Tp^+(D)}=1=h^1(\bbO')=h_\scc(\bbO')=\abs{\Tp^{-}(D)}. 
    \end{equation}
     If $p\geq 7$, then
\begin{align*}
h^1(\bbO)&=  \frac{\zeta_F(-1)}{2}+\left(11-3\Lsymb{2}{p}\right)\frac{h(-p)}{8}+\frac{h(-3p)}{6},\\
h^1(\bbO')&=\frac{\zeta_F(-1)}{2}+\left(3-3\Lsymb{2}{p}\right)\frac{h(-p)}{8}+\frac{h(-3p)}{6},\\
\abs{\Tp^+(D)}= h_\scc(\bbO)&=\frac{\zeta_F(-1)}{4}+\left(17-\Lsymb{2}{p}\right)\frac{h(-p)}{16}+\frac{h(-2p)}{8}+\frac{h(-3p)}{12},\\
\abs{\Tp^-(D)}=h_\scc(\bbO')&=\frac{\zeta_F(-1)}{4}+\left(9-9\Lsymb{2}{p}\right)\frac{h(-p)}{16}+\frac{h(-2p)}{8}+\frac{h(-3p)}{12}. 
\end{align*}
  \end{prop}
  \begin{proof}
 We
focus on the values of $\Delta(B, \bbO)$ and $\Delta(B, \bbO')$ for
$B\subseteq K_1$,
since the rest of the data required by
(\ref{xeq:214}) and \eqref{eq:139}  are routine calculations (see
\cite[\S7]{li-xue-yu:unit-gp} and
\cite[\S3]{xue-yang-yu:num_inv}). From part (2) of Theorem~\ref{thm:selectivity},  exactly one  of the two spinor
genera of maximal orders is selected by  $B$, so we have
\begin{equation}
  \label{eq:111}
\Delta(B, \bbO)+\Delta(B, \bbO')=1\qquad \text{if}\quad B\subseteq K_1. 
\end{equation}



We first treat the case $p\geq 7$. Define 
  \begin{equation}\label{eq:6-110}
  B_{1,2}\coloneqq O_F+\grq
  O_{K_1}=\Z+\Z\sqrt{p}+\Z\sqrt{-1}+\Z(1+\sqrt{-1})(1+\sqrt{p})/2,
  \end{equation}
  where $\grq=2\vartheta O_F$ is the unique dyadic prime of $F$. According to
  \cite[\S7]{li-xue-yu:unit-gp}, we have 
\begin{equation}
  \label{eq:109}
\scrB=\{O_F[\sqrt{-1}],\quad  B_{1,2}, \quad O_{K_1}, \quad O_{K_2}, \quad
O_{K_3}\} \quad\text{if} \quad p\geq 7.  
\end{equation}
Here we have used the fact that $O_F[\sqrt{-\varepsilon}]$ coincides
with the maximal order $O_{K_2}$ in
$K_2= F(\sqrt{-2})$ by \cite[Proposition~2.6]{xue-yang-yu:num_inv}
(see also \cite[Lemma~7.2.5]{li-xue-yu:unit-gp}). Moreover,
\[\scrB^1=\scrB\smallsetminus \{O_{K_2}\} \quad\text{if} \quad p\geq 7. \]

It is clear from (\ref{eq:146}) that
$\bbO_0\cap F(i)=O_F[i]\simeq O_F[\sqrt{-1}]$, and
$\bbO_0\cap F(j)=O_{F(j)}\simeq O_{K_1}$. Since $\bbO_0$ belongs to
the principal spinor genus, we have
\begin{align}
  \label{eq:112}
\Delta(O_F[\sqrt{-1}], \bbO)&=1,&  \Delta(O_{K_1}, \bbO)&=1, \qquad
\text{and hence}\\
  \label{eq:114}
\Delta(O_F[\sqrt{-1}], \bbO')&=0, & \Delta(O_{K_1}, \bbO')&=0
\qquad \text{ by  (\ref{eq:111})}. 
\end{align}
Applying (\ref{eq:11}) with $B=B_{1,2}$ and $B'=O_{K_1}$, we obtain
from (\ref{eq:4}) that 
\begin{equation}
\Delta(B_{1,2}, \bbO)=\frac{1}{2}\left(1+\Lsymb{2}{p}\right),\qquad
\Delta(B_{1,2}, \bbO')=\frac{1}{2}\left(1-\Lsymb{2}{p}\right). 
\end{equation}

The data required by 
(\ref{xeq:214}) and \eqref{eq:139} are gathered in the following table (see
\cite[\S8.2]{li-xue-yu:unit-gp}): 
\begin{center}
\renewcommand{\arraystretch}{1.5}
  \begin{tabular}{*{7}{|>{$}c<{$}}|}
\hline
B & \abs{\bmu(B)}& w(B) & h(B)/h(F)& s(B, \bbO)& \Delta(B,
                                                           \bbO) &
                                                                   \Delta(B, \bbO')\\
\hline

O_F[\sqrt{-1}]&   4 & 2 & \big(2-\Lsymb{2}{p}\big)h(-p)& 1& 1 & 0 \\
\hline 

 B_{1,2}&   4& 4&
                                         \big(2-\Lsymb{2}{p}\big)h(-p)&
                                                                        1& 
                                                                        \frac{1}{2}\big(1+\Lsymb{2}{p}\big)&\frac{1}{2}\big(1-\Lsymb{2}{p}\big) 
                                                                                                              \\

\hline 
O_{K_1} &   4& 4& h(-p)& 1& 1 & 0 \\
    \hline

O_{K_3} &   6& 3 & h(-3p)/2 & 0&1& 1\\
    \hline \hline
O_{K_2} &   2& 2& h(-2p)& 0& 1 & 1 \\
\hline         
  \end{tabular}
\end{center}
Here the data for $O_{K_2}$ is separated from the rest to
emphasize that $O_{K_2}$ is the unique order in $\scrB\smallsetminus \scrB^1$. 
The class number formulas for $p\geq 7$ in the proposition now follow by a straightforward calculation. 

Next, suppose that $p=3$.  According to
\cite[Theorem~1.6]{li-xue-yu:unit-gp}, we have $\abs{\Tp(D)}=2$,
and hence every spinor genus of maximal orders contains precisely one
type. Now it follows from Lemma~\ref{lem:bijection-prin-type} that
\[h_\scc(\bbO)=\abs{\Tp^+(D)}=1=\abs{\Tp^{-}(D)}=h_\scc(\bbO').\]
To compute $h^1(\bbO)$ and $h^1(\bbO')$, note that  $K_1=K_3=\Q(\sqrt{3},
\sqrt{-1})$ in this case. Let $B_{1,3}\coloneqq \Z[\sqrt{3}, (1+\sqrt{-3})/2]$ as in
\cite[\S 6.2.6]{xue-yang-yu:ECNF}, which has conductor $\sqrt{3}O_F$
in $O_{K_1}$.  Since $\sqrt{3}O_F$ represents the unique nontrivial
element of the
order $2$ group $\Pic_+(O_F)$ here,   we combine (\ref{eq:11}) and \eqref{eq:y3} to obtain
\[ \Delta(B_{1,3}, \bbO)=0, \qquad \Delta(B_{1,3}, \bbO')=1.\]
The relevant data required by (\ref{xeq:214}) is now given by the
following table 
\begin{center}
\renewcommand{\arraystretch}{1.3}
  \begin{tabular}{*{7}{|>{$}c<{$}}|}
\hline
B & \abs{\bmu(B)}& w(B) & h(B)/h(F)& s(B, \bbO)& \Delta(B,
                                                           \bbO) &
                                                                   \Delta(B, \bbO')\\
\hline

O_F[\sqrt{-1}]&   4 & 2 & 1& 1& 1 & 0 \\
\hline 

 B_{1,2}&   4& 4& 1&1&0 &1\\
\hline
B_{1,3} &  6 & 3 & 1 & 1& 0 & 1\\
\hline
O_{K_1} &   12& 12& 1& 1& 1 & 0 \\
\hline 
  \end{tabular}
\end{center}
Recall that $\zeta_F(-1)=1/6$ by \cite[\S 6.2.6]{xue-yang-yu:ECNF}
again, we get  $h^1(\bbO)=h^1(\bbO')=1$ when $p=3$. 
  \end{proof}
  \begin{rem}
Summing up the formulas 
     for $\abs{\Tp^+(D)}$ and
$\abs{\Tp^-(D)}$, we  recover the type
number formula of $\abs{\Tp(D)}$ for $p\equiv 3\pmod{4}$ and $p\geq 7$, which was previously computed in
\cite[(4.7)]{xue-yu:type_no}:   
    \begin{equation}
  \label{eq:245}
  \abs{\Tp(D)}=\frac{\zeta_F(-1)}{2}+\left(13-5\Lsymb{2}{p}\right)\frac{h(-p)}{8}+
  \frac{h(-2p)}{4}+\frac{h(-3p)}{6}.  
\end{equation}
    Similar to the $p\equiv 1\pmod{4}$ case discussed in \S\ref{sect:end-ring}, the  type
    numbers $\abs{\Tp^+(D)}$ and $\abs{\Tp^-(D)}$ can be interpreted
    as proper class numbers of even definite quaternary quadratic
    forms of discriminant $4p$ within a fixed genus. In this setting such formulas
    have been previously obtained by Ponomarev \cite{ponomarev:aa1981} and by
    Chan and Peters \cite[\S3]{chan-peters} using another method.
  \end{rem}

\begin{rem}\label{rem:Bernoulli}
  Let $p\in \bbN$ be an arbitrary prime and $\grd_F$ be the discriminant of
  $F$, i.e.~$\grd_F=p$ if $p\equiv 1\pmod{4}$, and $\grd_F=4p$
  otherwise.  The special value $\zeta_F(-1)$ can be calculated by
  Siegel's formula (see \cite[Table~2, p.~70]{Zagier-1976-zeta},
  \cite[Theorem~I.6.5]{van-der-Geer-HMS}):
\begin{equation}\label{eq:113}
  \zeta_F(-1)=\frac{1}{60}\sum_{\substack{b^2+4ac=\grd_F\\ a,c>0}} a,
\end{equation}
where  $b\in \Z$ and $a,c\in \bbN_{>0}$.  According to
\cite[Remark~3.4]{xue-yu:counting-av}, we can also express
$\zeta_F(-1)$ in terms of the second generalized Bernoulli number
(see \cite[\S4.2]{AIK:2014}, \cite[Exercise~4.2(a)]{Washington-cyclotomic}): 
\begin{equation}
    \zeta_F(-1)=\frac{B_{2,\chi_{_F}}}{2^3\cdot 3}=\frac{1}{24\grd_F}\sum_{a=1}^{\grd_F}\chi_{_F}(a)a^2,  
\end{equation}
where $\chi_{_F}$ is the quadratic character associated to $F/\Q$, that
is, $\chi_{_F}$ is the unique even (i.e.~$\chi_{_F}(-1)=1$) quadratic character
of conductor $\grd_F$. 
\end{rem}

\section{Class number calculations when $p\not\equiv 3\pmod{4}$}
\label{sec:class-number}
\numberwithin{thmcounter}{section}
\numberwithin{table}{section}

Throughout this section, we assume that $p\in \bbN$ is a prime number
that is not congruent to $3$ modulo $4$. Let $F=\Q(\sqrt{p})$, and
$D=D_{\infty_1, \infty_2}$ be the totally definite quaternion
$F$-algebra that splits at all finite primes of $F$.  If $p\equiv
1\pmod{4}$, then $A\coloneqq \Z[\sqrt{p}]$ is a suborder of $O_F$
of index $2$.  The main goal of this section is to compute
the class numbers
\[h^1(\calO_r)\coloneqq \abs{D^1\bsh \whD^1/\wcO_r^1} \qquad\text{
    for }\quad r\in \{8,16\} \text{ and } p\equiv
1\pmod{4},  \]
 where
$\calO_8$ and $\calO_{16}$ are the $A$-orders in $D$ defined in
\eqref{eq:31}.  In particular, $\calO_r\cap F=A$ for both $r\in \{8,
16\}$. 
As far as we know, there are no systematic ways
to compute $h^1(\calO)$ when the center of the quaternion order $\calO$
is non-maximal (other than the Selberg trace formula). Thus we adopt some ad-hoc methods. 

Let $\varepsilon>1$ be the fundamental unit of $O_F^\times$. By
\cite[Lemma~2.4]{xue-yang-yu:num_inv}, $\Nm_{F/\Q}(\varepsilon)=-1$ if
$p\not\equiv 3\pmod{4}$, which implies that in this case
\begin{equation}
    \label{eq:133}
      \Pic_+(O_F)\cong\Pic(O_F) \quad\text{and}\quad  O_{F, +}^\times= O_F^{\times
        2}=\dangle{\varepsilon^2}.
\end{equation}

\begin{lem}\label{lem:bije-eichler-p1mod4}
  Suppose that $p\not\equiv 3\pmod{4}$.  Then for every  Eichler $O_F$-order $\calO$ in
  $D$, the following canonical projection is a bijection
  \begin{equation}\label{eq:C.5}
D^1\backslash \whD^1/\wcO^1\to D^\times\backslash
  \whD^\times/\whF^\times\wcO^\times. 
  \end{equation}
Moreover, $h^1(\calO)=h(\calO)/h(F)$. 
\end{lem}
\begin{proof}
  According to
\cite[Corollary~18.5]{Conner-Hurrelbrink}, the narrow class number 
$h_+(O_F)$ is odd, and $O_{F,
  +}^\times=O_F^{\times 2}$ by (\ref{eq:133}). It follows that 
\begin{equation}\label{eq:C.6}
\whF^\times=F^\times_+\whF^{\times 2}
  \whO_F^\times,   \quad\text{and}\quad F^\times_+\cap \whF^{\times 2}
  \whO_F^\times=F^{\times 2}O_{F, +}^\times=F^{\times 2}.
\end{equation}
From  \cite[Theorem~4.1]{vigneras}, 
$\Nr(D^\times)=F^\times_+$, so 
$\Nr(D^\times\whF^\times\wcO^\times)=F^\times_+\whF^{\times 2}
  \whO_F^\times=\whF^\times$.  
Thus for any $x\in \whD^\times$, there
exists $\alpha\in D^\times$ and $y\in \whF^\times\wcO^\times$ such
that $\Nr(\alpha x y)=1$, which shows that the map in \eqref{eq:C.5} is
surjective. 

If $D^1x_i\wcO^1$ with $x_i\in \whD^1$ for $i=1, 2$ are mapped to the same element, then there exists $\alpha\in D^\times$ and
$y\in \whF^\times\wcO^\times$ such that $x_1=\alpha x_2 y$. Taking the
reduced norm on both sides, we get
$\Nr(\alpha)=\Nr(y^{-1})\in F^\times_+\cap (\whF^{\times 2} \whO_F^\times)$.  It
follows from \eqref{eq:C.6} that there exists $a\in F^\times$ such
that $\beta:=\alpha a^{-1}\in D^1$.  
On the other hand,
\begin{equation}\label{eq:C.7}
  (\whF^\times\wcO^\times)\cap \whD^1=\wcO^1.
\end{equation}
If we put $u:=ay$, then $u\in \wcO^1$ and  $x_1=\beta x_2 u$. This 
 shows that the map in  \eqref{eq:C.5} is injective. 

 Lastly, we prove that $h^1(\calO)=h(\calO)/h(F)$.  The ideal class
 group $\Cl(O_F)$ acts naturally on the class set $\Cl(\calO)$ of locally
 principal right $\calO$-ideal classes by
\begin{equation}
\Cl(O_F)\times \Cl(\calO)\to \Cl(\calO),\qquad ([\gra],
[I])\mapsto [\gra I].   
\end{equation}
Let $\overline{\Cl}(\calO)$ be the set of orbits of this action, which can be described adelically as
\begin{equation}\label{eq:C.8}
  \overline{\Cl}(\calO)\simeq D^\times\backslash \whD^\times/\whF^\times\wcO^\times.
\end{equation}
Since $h(F)$ is odd, the action is free by
\cite[Corollary~2.5]{xue-yu:type_no}, and hence
$h^1(\calO)=\abs{\overline{\Cl}(\calO)}=h(\calO)/h(F)$.
\end{proof}


From now on we assume that $p\equiv 1\pmod{4}$ for the rest of this
section. We first show that the narrow class number of
$A=\Z[\sqrt{p}]$ is odd. 

\begin{lem}\label{lem:odd-class-no}
Put    $\varpi\coloneqq [O_F^\times:A^\times]$ . Then
  \begin{gather}
   \Pic_+(A)\cong\Pic(A),\qquad  A_+^\times=A^{\times2}\qquad \text{and} \label{eq:134}\\
    \label{eq:131}
    h_+(A)=h(A)=\left(2-\Lsymb{2}{p}\right)\frac{h(O_F)}{\varpi}.
 \end{gather}
In particular, $h_+(A)$ is odd. 
\end{lem}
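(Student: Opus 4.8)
The plan is to reduce the three assertions to two classical inputs: the sign of the fundamental unit $\varepsilon$ of $F=\Q(\sqrt p)$, and the formula relating the class number of the order $A=\Z[\sqrt p]$ to that of $O_F$. First I would fix the setup: since $p\equiv 1\pmod 4$ we have $O_F=\Z[(1+\sqrt p)/2]$ and $[O_F:A]=2$, so $A=\Z+2O_F$ is the order of conductor $\grf=2O_F$ in $O_F$; and since $-1\in A^\times$ and $[O_F^\times:A^\times]=\varpi$, the unit group is $A^\times=\langle -1,\varepsilon^{\varpi}\rangle$. Let $\sign\colon F^\times\to\{\pm1\}^2$ be the sign map of the two real embeddings. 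By \cite[Lemma~2.4]{xue-yang-yu:num_inv} the fundamental unit $\varepsilon$ is not totally positive when $p\equiv 1\pmod4$; as $\varepsilon>1$ in the distinguished embedding, this forces $\sign(\varepsilon)=(+,-)$ (equivalently $\Nm_{F/\Q}(\varepsilon)=-1$). Since $\varpi\in\{1,3\}$ is odd, $\sign(\varepsilon^{\varpi})=\sign(\varepsilon)=(+,-)$, and combined with $\sign(-1)=(-,-)$ this shows the image $\sign(A^\times)$ is all of $\{\pm1\}^2$.

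From $\sign(A^\times)=\{\pm1\}^2$ I would deduce the first two claims. The canonical surjection $\Cl_+(A)\twoheadrightarrow\Cl(A)$ has kernel $F^\times/(A^\times F_+^\times)\cong\{\pm1\}^2/\sign(A^\times)$, which is now trivial; via the canonical identifications $\Cl_+(A)\cong\Pic_+(A)$ and $\Cl(A)\cong\Pic(A)$ this gives $\Pic_+(A)\cong\Pic(A)$, whence $h_+(A)=h(A)$. For the unit groups, a unit $\pm\varepsilon^{\varpi m}\in A^\times$ is totally positive precisely when it is $\varepsilon^{\varpi m}$ with $m$ even, because $\sign(\varepsilon^{\varpi})$ has order $2$ while $\sign(-\varepsilon^{\varpi m})$ is never $(+,+)$; hence $A_+^\times=\langle\varepsilon^{2\varpi}\rangle=A^{\times2}$.

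For the numerical formula I would invoke the classical exact sequence attached to the order $A$ of conductor $2O_F$,
\[
1\longrightarrow O_F^\times/A^\times\longrightarrow (O_F/2O_F)^\times/(A/2O_F)^\times\longrightarrow \Pic(A)\longrightarrow \Pic(O_F)\longrightarrow 1 .
\]
Here $A/2O_F\cong\zmod{2}$ has trivial unit group; and as $2\nmid\grd_F=p$, the prime $2$ is unramified in $O_F$, splitting when $p\equiv1\pmod 8$ (so $\Lsymb{2}{p}=1$) and inert when $p\equiv5\pmod 8$ (so $\Lsymb{2}{p}=-1$), so $\abs{(O_F/2O_F)^\times}=2-\Lsymb{2}{p}$. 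Comparing cardinalities (and using that $\Pic(A)\to\Pic(O_F)$ is onto) gives
\[
h(A)=\Bigl(2-\Lsymb{2}{p}\Bigr)\frac{h(O_F)}{\varpi}=\Bigl(2-\Lsymb{2}{p}\Bigr)\frac{h(p)}{\varpi},
\]
and $h_+(A)=h(A)$ by the previous step.

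Finally, oddness: the discriminant $\grd_F=p$ has a single prime factor, so genus theory gives that the narrow class group of $O_F$ has trivial $2$-rank, and by \eqref{eq:133} it coincides with the ordinary class group; hence $h(p)$ is odd. Since $2-\Lsymb{2}{p}\in\{1,3\}$ and $\varpi\in\{1,3\}$ are odd and $h(A)=(2-\Lsymb{2}{p})\,h(p)/\varpi$ is an integer dividing the odd number $(2-\Lsymb{2}{p})\,h(p)$, it is odd, and therefore so is $h_+(A)$. I anticipate no real obstacle; the two points that need care are handling $\varpi=3$ uniformly in the sign computations (raising a sign vector to the odd exponent $\varpi$ changes nothing, so it affects neither $\sign(A^\times)$ nor $A^{\times2}$) and matching the conductor-$2O_F$ class number formula with the splitting behaviour of $2$ in $O_F$.
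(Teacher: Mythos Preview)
Your proof is correct and follows essentially the same approach as the paper: the existence of a unit in $A^\times$ of norm $-1$ gives $\Pic_+(A)\cong\Pic(A)$, Dedekind's formula (your exact sequence) gives the expression for $h(A)$, and the oddness of $h(p)$ finishes. The only difference is that you handle $A_+^\times=A^{\times2}$ uniformly via the sign map, whereas the paper defers to a case-by-case verification ($\varpi=1$ versus $\varpi=3$) from \cite[\S4.3]{xue-yang-yu:num_inv}, and you spell out the conductor exact sequence and the genus-theory reason for $h(p)$ odd rather than citing \cite[p.~95]{vigneras} and \cite[Corollary~18.4]{Conner-Hurrelbrink}.
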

\begin{proof}
According to \cite[\S 4.2]{xue-yang-yu:num_inv}, $\varpi\in \{1,3\}$,
and $\varpi=1$  if $p\equiv 1\pmod{8}$.  
  Thus,  $\varepsilon^3\in A^\times$. As
  $\Nm_{F/\Q}(\varepsilon^3)=(-1)^3=-1$, we find that
  $\Pic_+(A)\cong\Pic(A)$. The equality $A_+^\times=A^{\times 2}$ can
  be verified case by case according to $\varpi=1$ or $3$
  (See~\cite[\S4.3]{xue-yang-yu:num_inv}).  To compute the class
  number $h(A)$, we apply directly Dedekind's formula
  \cite[p.~95]{vigneras} for the class number of quadratic orders.
  Lastly, according to \cite[Corollary~18.4]{Conner-Hurrelbrink}, $h(O_F)$ is
  odd for every prime $p$. It is clear from (\ref{eq:131}) that
  $h_+(A)$ is odd as well.
  \end{proof}

Let us recall the definition of the $A$-oder $\calO_{16}$. Fix a
maximal $O_F$-order $\calO_1$ in $D$ and also an
identification $\calO_1\otimes \Z_2= \Mat_2(O_{F_2})$.  Then 
 $\calO_{16}$ is defined to be 
 the unique suborder of $\calO_1$ of index $16$ such
that  $\calO_{16}\otimes
\Z_2= \Mat_2(A_2)$.

\begin{lem}\label{lem:biject-O16}
The following canonical projection is a bijection
  \begin{equation}\label{eq:CC.5}
D^1\backslash \whD^1/\wcO_{16}^1\to D^\times\backslash
  \whD^\times/\whF^\times\wcO_{16}^\times. 
\end{equation}
In particular, $h^1(\calO_{16})$ coincides with the type number $t(\calO_{16})$.
\end{lem}

\begin{proof}
  Once we prove that the equalities  \eqref{eq:C.6}   and
  \eqref{eq:C.7} hold again when $O_F$ and $\calO$ are replaced by $A$ and
  $\calO_{16}$ respectively, the same proof as that  of
  Lemma~\ref{lem:bije-eichler-p1mod4} applies to the bijectivity of
  (\ref{eq:CC.5}) here.

  From Lemma~\ref{lem:odd-class-no}, $h_+(A)$ is
  odd, and $A_+^\times=A^{\times 2}$, so the equalities in 
\eqref{eq:C.6} still hold if $O_F$ is replaced by $A$. 
If $\Nr(au)=1$ for some $a\in \whF^\times$ and
$u\in \wcO_{16}^\times$, then
$a^2=\Nr(u^{-1})\in \Nr(\wcO_{16}^\times)=\whA^\times$, which implies
that $a\in \whO_F^\times$. Since
$[\whO_F^\times:
\whA^\times]=[O_{F_2}^\times:A_2^\times]=2-\Lsymb{2}{p}$ (which is odd),  we
get $a\in \whA^\times\subseteq \wcO_{16}^\times$, and hence
$au\in \wcO_{16}^1$.  This shows that
$ (\whF^\times\wcO_{16}^\times)\cap \whD^1=\wcO_{16}^1$.  The rest of
the proof of (\ref{eq:CC.5}) runs the same as that of
Lemma~\ref{lem:bije-eichler-p1mod4}.

Lastly, recall that $\calN(\calO_{16})=\whF^\times\wcO_{16}^\times$ as
in (\ref{eq:41}). It follows that the cardinality of the right hand
side of (\ref{eq:CC.5}) is precisely the type number $t(\calO_{16})$,
which is calculated in \cite[(4.12)]{xue-yu:type_no} and reproduced in
\eqref{eq:92} (resp.~\eqref{zeq:104}) for $p\geq 13$ (resp.~$p=5$). 
\end{proof}

Keep  the maximal order $\calO_1\subset D$ fixed as before.   We write
$\calO_8$ and $\calO_4$ for the unique $A$-suborders of $\calO_1$ of
index $8$ and $4$ respectively such that
\begin{equation}
  \label{eq:C.1}
   \calO_8\otimes \Z_2=
\begin{pmatrix}
  A_2 & 2 O_{F_2} \\  O_{F_2} & O_{F_2}\\
\end{pmatrix},\qquad    \calO_4\otimes \Z_2=
\begin{pmatrix}
  O_{F_2} & 2 O_{F_2} \\  O_{F_2} & O_{F_2}\\
\end{pmatrix}.
\end{equation}
By definition, 
$\calO_4$ is an Eichler order of level $2O_F$. Clearly, $A/2O_F\cong
A_2/2O_{F_2}\cong \F_2$.


\begin{prop}\label{prop:class-number-O8}
  Let $\zeta_F(s)$ be the Dedekind zeta function of $F=\Q(\sqrt{p})$,
  and   $h(-p)$ be the class number $\Q(\sqrt{-p})$. Then 
  \begin{equation}
    \label{eq:C.2}
    h^1(\calO_8)=\frac{3}{2}\left(4-\Lsymb{2}{p}\right)\zeta_F(-1)+\left(2-\Lsymb{2}{p}\right)\frac{h(-p)}{8}.
  \end{equation}
\end{prop}
To compute $h^1(\calO_8)$, we first study its relation with $h^1(\calO_4)$. 
\begin{lem}\label{lem:relating-h1}
We have 
  \begin{equation}
    \label{eq:64}
    h^1(\calO_8)=
    \begin{cases}
      h^1(\calO_4)\qquad &\text{if } p\equiv 1\pmod{8},\\
      3h^1(\calO_4)-2\abs{S}&\text{if } p\equiv 5\pmod{8}, 
    \end{cases}
  \end{equation}
where $S$ denotes the following set 
  \[\{[x]=D^1x\wcO_4^1\in  D^1\backslash\whD^1/\wcO_4^1\,\mid\, \exists \zeta\in D^1\cap x\wh\calO_4^1
    x^{-1} \text{ such that } \ord(\zeta)=3\}.\]
\end{lem}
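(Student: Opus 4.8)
The plan is to compare the class numbers $h^1(\calO_8)$ and $h^1(\calO_4)$ by analyzing the natural projection map on double coset spaces induced by the inclusion $\calO_8\subset\calO_4$, then to isolate exactly where this map fails to be injective. Concretely, since $\wcO_8^1\subseteq\wcO_4^1$, there is a surjection
\[
\pi: D^1\backslash\whD^1/\wcO_8^1 \twoheadrightarrow D^1\backslash\whD^1/\wcO_4^1,
\]
and the fiber over a class $[x]$ has cardinality equal to the number of orbits of the finite group $\Gamma_x := D^1\cap x\wcO_4^1 x^{-1}$ acting on $\wcO_4^1/\wcO_8^1$ (acting via $x^{-1}(\cdot)x$ on the right coset space, using that $\wcO_8^1$ is normal in... — actually one needs to be a bit careful, so let me restate). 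By the standard orbit-counting description of fibers of double-coset projections, $|\pi^{-1}([x])| = |\Gamma_x\backslash(\wcO_4^1/\wcO_8^1)|$, where $\Gamma_x$ acts on the coset space on the left after conjugating into $\wcO_4^1$. The first step is therefore to identify the finite set $\wcO_4^1/\wcO_8^1$ and the action: away from $2$ the two orders agree, so everything is concentrated at the prime $2$, and $(\wcO_4)_2^1/(\wcO_8)_2^1$ is a small explicit set determined by the local picture in \eqref{eq:C.1}, namely the upper-left entry lies in $O_{F_2}$ versus $A_2$ with $O_{F_2}/A_2\simeq\F_2$.

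Second, I would compute $|(\wcO_4)_2^1/(\wcO_8)_2^1|$ and the relevant local group action. Since $[O_{F_2}:A_2]=2$ and $\calO_8$ is cut out inside $\calO_4$ by a congruence condition on one matrix entry modulo the maximal ideal, the index $[(\wcO_4)_2^1:(\wcO_8)_2^1]$ should come out to $1$ when $p\equiv1\pmod 8$ (because then $2$ splits in $F$, $F_2\simeq\Q_2\times\Q_2$, and the extra condition is automatically satisfied for norm-one elements — making $\pi$ a bijection and giving the first line of \eqref{eq:64}) and to $3$ when $p\equiv5\pmod 8$ (because then $F_2$ is the unramified quadratic extension, $O_{F_2}/A_2\cong\F_2$ sits inside the residue field $\F_4$, and one gets $3 = |\mathbf{P}^1(\F_2)|$ possible cosets, or equivalently the three intermediate $A_2$-lattices as in \eqref{eq:26}). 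This case split matching $p\bmod 8$ is exactly the dichotomy appearing in the statement, so the local computation is the technical heart.

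Third, in the case $p\equiv5\pmod 8$, I would analyze the $\Gamma_x$-action on this $3$-element fiber. Generically $\Gamma_x$ acts trivially (or at least without fixing a reduction of the fiber count), so each fiber has exactly $3$ elements, contributing $3h^1(\calO_4)$; the correction $-2|S|$ arises precisely from those classes $[x]$ where $\Gamma_x$ contains an element permuting the three cosets nontrivially. One then checks that an element $\zeta\in\Gamma_x$ acts nontrivially on the $3$-element set if and only if it has order $3$ (an order-$2$ or order-$1$ element cannot act as a $3$-cycle, and by the classification of finite subgroups of $\bbH^\times$ the only elements of odd order $>1$ that can occur are of order $3$ — cf.\ \cite[Theorems~I.3.6--7]{vigneras}). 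For such $[x]\in S$, the order-$3$ element acts as a $3$-cycle on the fiber, collapsing $3$ potential classes to $1$, i.e.\ reducing the fiber count by $2$; summing gives $h^1(\calO_8)=3h^1(\calO_4)-2|S|$.

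The main obstacle I anticipate is the bookkeeping in the second and third steps: correctly identifying $(\wcO_4)_2^1/(\wcO_8)_2^1$ as the projective line $\mathbf{P}^1(\F_2)$ (or the set of three $A_2$-lattices) \emph{compatibly with} the conjugation action of norm-one elements, and verifying that an order-$3$ element necessarily acts as a genuine $3$-cycle rather than fixing a point — this requires knowing that an embedding of $\Z[\zeta_3]$-type orders into $\calO_4$ cannot be simultaneously optimal for two of the three intermediate lattices, which should follow from a local optimal-embedding count at $2$ (the order-$3$ element generates a field in which $2$ is inert when $p\equiv5\pmod{12}$, forcing transitivity on $\mathbf P^1(\F_2)$). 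I would also double check that order-$2$ elements of $\Gamma_x$ genuinely act trivially on the fiber (they can fix $0$, $1$, or $3$ points of $\mathbf P^1(\F_2)$; fixing exactly one would also cause a correction, so one must rule this out or absorb it), which is likely where the $\left(2-\Lsymb{2}{p}\right)$ versus $3\left(4-\Lsymb{2}{p}\right)$ shaped coefficients in \eqref{eq:C.2} ultimately come from once \eqref{eq:64} is combined with the known formula for $h^1(\calO_4)$.
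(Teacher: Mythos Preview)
Your overall strategy---analyze the fibers of the projection $D^1\backslash\whD^1/\wcO_8^1\to D^1\backslash\whD^1/\wcO_4^1$---matches the paper exactly. But your identification of the fiber is wrong, and this error propagates into all the complications you anticipate.

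The quotient $(\calO_4)_2^1/(\calO_8)_2^1$ is not $\mathbf P^1(\F_2)$; it is the \emph{group} $\F_4^\times$. Concretely, $(\calO_8)_2^1$ is \emph{normal} in $(\calO_4)_2^1$ (you started to write this and backed off), and the map $\bigl(\begin{smallmatrix}a&b\\c&d\end{smallmatrix}\bigr)\mapsto a\bmod 2O_{F_2}$ induces an isomorphism $(\calO_4)_2^1/(\calO_8)_2^1\simeq(O_{F_2}/2O_{F_2})^\times=\F_4^\times$ when $p\equiv 5\pmod 8$. (When $p\equiv 1\pmod 8$, one has $A_2^\times=O_{F_2}^\times$ and hence $(\calO_8)_2^1=(\calO_4)_2^1$, giving the bijection directly.) Consequently the action of $\Gamma_x=D^1\cap x\wcO_4^1x^{-1}$ on the fiber factors through the \emph{group homomorphism} $\Gamma_x\to\F_4^\times$, and $\Gamma_x$ acts by translations on a cyclic group of order~$3$.

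This dissolves both of your anticipated obstacles. An order-$2$ element of $\Gamma_x$ maps to the identity in $\F_4^\times$ (there are no order-$2$ elements in a group of order~$3$), so acts trivially: there is nothing to check. For order-$3$ elements, you do not need optimal-embedding counts; any $\zeta\in\Gamma_x$ with $\ord(\zeta)=3$ satisfies $\zeta^2\pm\zeta+1=0$, hence $\Tr(\zeta)=\mp 1\not\equiv 0\pmod{2O_F}$, while every element of $(\calO_8)_2^1$ has trace in $A_2+2O_{F_2}\equiv 0\pmod{2O_{F_2}}$ on the diagonal---so $\zeta\notin\wcO_8^1$, i.e.\ its image in $\F_4^\times$ is nontrivial and it acts as a $3$-cycle. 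Thus the fiber over $[x]$ has size~$1$ if $[x]\in S$ and size~$3$ otherwise, giving $h^1(\calO_8)=3h^1(\calO_4)-2|S|$.
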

\begin{proof}
  Let
  $\theta: D^1\backslash \whD^1/\wcO_8^1\to D^1\backslash
  \whD^1/\wcO_4^1$ be the canonical projection map.  If $p\equiv
  1\pmod{8}$, then $O_{F_2}=\Z_2\times \Z_2$, and hence
  $A_2^\times=O_{F_2}^\times$. It follows that $\wcO_8^1=\wcO_4^1$ in
  this case, so $\theta$ is a bijection. 

Assume that $p\equiv 5\pmod{8}$ for the rest of the proof. For each
$[x]=D^1x\wcO_4^1$, 
\[\theta^{-1}([x])=D^1\backslash
  (D^1x\wcO_4^1)/ \wcO_8^1. \]
 Multiplying $D^1x\wcO_4^1$ from the left by $x^{-1}$
 induces bijections
  \begin{equation}
    \label{eq:C.4}
    D^1\backslash
  (D^1x\wcO_4^1)/ \wcO_8^1\simeq (x^{-1}D^1 x)\backslash
  (x^{-1}D^1x\wcO_4^1)/\wcO_8^1\simeq (x^{-1}D^1 x\cap \wcO_4^1)\backslash
  \wcO_4^1/\wcO_8^1.
  \end{equation}
From \eqref{eq:C.1},
 $(\calO_8\otimes \Z_2)^1$ is normal in $(\calO_4\otimes \Z_2)^1$, and 
\begin{equation}
 (\calO_4\otimes \Z_2)^1/(\calO_8\otimes
  \Z_2)^1 \simeq \F_4^\times,\qquad   \begin{pmatrix}
    a & b \\ c & d 
  \end{pmatrix}\mapsto (a \bmod 2O_{F_2}).
\end{equation}
Clearly,  $\wcO_4^1/\wcO_8^1\simeq  (\calO_4\otimes \Z_2)^1/(\calO_8\otimes
  \Z_2)^1$, and the action of $(x^{-1}D^1 x\cap \wcO_4^1)$ on $
\wcO_4^1/\wcO_8^1$ factors through the quotient \[(x^{-1}D^1 x\cap
  \wcO_4^1)\to   \wcO_4^1/\wcO_8^1. \]
Hence $\abs{\theta^{-1}([x])}\in \{1, 3\}$, and it takes value $1$ if
and only if the above homomorphism is surjective. 
If  $ \zeta'$ is an element of  $ (x^{-1}D^1 x\cap
\wcO_4^1)$  with  $3\ddiv \ord(\zeta')$, 
then it follows from \cite[\S2.8]{xue-yang-yu:num_inv} that
$F(\zeta')\simeq F(\sqrt{-3})$ and $\zeta'^2\pm \zeta'+1=0$. This
automatically implies that $\zeta'\not \in \wcO_8^1$ since
$\Tr(\xi)\equiv 0\pmod{2\whO_F}$ for every
$\xi\in \wcO_8^1$.
Therefore, $\abs{\theta^{-1}([x])}=1$ if and only if $[x]\in S$. In other words,  $\theta$ is a $3:1$ cover ``ramified''
above the set $S$, and the lemma follows.  
\end{proof}




\begin{proof}[Proof of Proposition~\ref{prop:class-number-O8}]
  First, suppose that $p\equiv 1\pmod{8}$. Combining the results of 
  Lemma~\ref{lem:relating-h1} and
  Lemma~\ref{lem:bije-eichler-p1mod4}, we see that
  $h^1(\calO_8)=h(\calO_4)/h(F)$, which also coincides with $t(\calO_8)$ by
  (\ref{eq:43}). The formulas for  $h(\calO_4)$ and   $t(\calO_8)$ are computed in \cite[Lemma~4.2]{xue-yu:type_no} and
  \cite[(4.11)]{xue-yu:type_no} respectively, so we get
  \begin{equation}
    \label{eq:65}
    h^1(\calO_8)=\frac{9}{2}\zeta_F(-1)+\frac{h(-p)}{8}\qquad
    \text{if }  p\equiv 1\pmod{8}. 
  \end{equation}

Next, suppose that $p\equiv 5\pmod{8}$.  Similar to the previous case,
we combine
Lemma~\ref{lem:bije-eichler-p1mod4} with
\cite[Lemma~4.2]{xue-yu:type_no} to obtain
  \begin{equation}\label{eq:C.13}
    h^1(\calO_4)=\frac{5}{2}\zeta_F(-1)+h(-p)/8+h(-3p)/3,  
  \end{equation}
where  $h(-3p)$ denotes the class number of $\Q(\sqrt{-3p})$. 
 
It remains to compute the cardinality of the set $S$ 
   defined in Lemma~\ref{lem:relating-h1}. Put
  \[\wt{S}:=\{[I]\in \Cl(\calO_4)\mid \exists \zeta\in
    \calO_l(I)^\times\text{ such that } \ord(\zeta)=3\}.\]
Here $\calO_l(I)$ denotes the left  order of $I$ as in (\ref{eq:34}). If we write
$I=D\cap x\wcO$ for some $x\in \whD^\times$, then $\calO_l(I)=D\cap
x\wcO x^{-1}$.  Therefore,  $[I]$ belongs to $\wt S$ if
and only if the image of $[I]$ belongs to  $S$ under
the following composition of maps
\[\Cl(\calO_4)\to \overline{\Cl}(\calO_4)\simeq
  D^\times\backslash\whD^\times/\whF^\times\wcO_4^\times\simeq D^1\backslash\whD^1/\wcO_4^1.\]
Since the action of $\Cl(O_F)$ on $\Cl(\calO_4)$ is free and
$\calO_l(\gra I)= \calO_l(I)$ for every $[\gra]\in \Cl(O_F)$, we see
that 
\begin{equation}\label{eq:C.11}
  \abs{\wt S}=h(F)\abs{S}.
\end{equation}

For simplicity, put $K=F(\sqrt{-3})$. For each
$\zeta\in \calO_l(I)^\times$ with $\ord(\zeta)=3$, we have
$F(\zeta)\simeq K$ and $F(\zeta)\cap \calO_l(I)\simeq O_K$ by
\cite[Proposition~3.1]{xue-yang-yu:num_inv}
(cf.~\cite[Table~7.2]{li-xue-yu:unit-gp}).  Thus each such
$\zeta\in \calO_l(I)^\times$ gives rise to an optimal embedding
$O_K\to \calO_l(I)$ and vice versa. For each $O_F$-order $\calO$ in
$D$, let $m(O_K, \calO, \calO^\times)$ be the number of
$\calO^\times$-conjugacy classes of optimal embeddings from $O_K$ into
$\calO$ (cf.~\cite[Section~3.2]{li-xue-yu:unit-gp}).  We claim that
\begin{equation}\label{eq:C.9}
m(O_K, \calO_l(I), \calO_l(I)^\times)=
\begin{cases}
2 \quad&\text{if}\quad  [I]\in \wt
S,\\
0 \quad &\text{otherwise.}
\end{cases}
\end{equation}
The equality clearly holds if $[I]\not\in \wt S$.  Suppose that
$[I]\in \wt S$. If $p>5$, then  
$\calO_l(I)^\times/O_F^\times$ is a finite group isomorphic to
$\bfC_3, \bfD_3$ or $\bfA_4$ according to \cite[Tables~4.1 and 4.2]{li-xue-yu:unit-gp}, where $\bfC_3$ denotes the cyclic group
of order $3$, 
$\bfD_3$ the dihedral group of order $6$, and $\bfA_4$ the alternating
group on $4$-letters.  However if
$\calO_l(I)^\times/O_F^\times\simeq \bfD_3$, then by
\cite[Proposition~4.3.5]{li-xue-yu:unit-gp}, $\calO_l(I)$ necessarily
contains an $O_F$-suborder isomorphic to the order 
$\scrO_3^\dagger$ in \cite[Table~4.3]{li-xue-yu:unit-gp}.   This leads to a contradiction since
$\scrO_3^\dagger$ has reduced discriminant $3O_F$ while $\calO_l(I)$
is an Eichler order of level $2O_F$.  
If $p=5$, then $h(\calO_4)=1$ by
\cite[Lemma~4.2]{xue-yu:type_no} (which implies that $\calO_\ell(I)=\calO_4$), and
 $\Mass(\calO_4)=1/[\calO_4^\times: O_F^\times]=1/12$ by a direct
 calculation using the mass formula \cite[Corollaire~IV.2.3]{vigneras}.
We find that $\calO_4^\times/O_F^\times\simeq \bfA_4$ when $p=5$ from
the classification in \cite[\S8.1]{li-xue-yu:unit-gp}.
In conclusion, if $[I]\in \wt S$, then $\calO_l(I)^\times/O_F^\times$
is isomorphic to either $\bfC_3$ or $\bfA_4$, so our claim follows
directly from
\cite[Proposition~3.4]{li-xue-yu:unit-gp}.


Lastly, we apply the trace formula \cite[Theorem~III.5.11]{vigneras}
to $O_K$ and $\calO_4$ to get
\begin{equation}\label{eq:C.10}
  \sum_{[I]\in \Cl(\calO_4)}m(O_K, \calO_l(I), \calO_l(I)^\times)=h(O_K)\left(1+\Lsymb{O_K}{2O_F}\right)=2h(K).
\end{equation}
Here $\Lsymb{O_K}{\grp}$ is the Eichler symbol \cite[p.~94]{vigneras} for a prime ideal $\grp\subseteq O_F$,
and it takes value $1$ at $\grp=2O_F$, which splits
in $K$. Combining \eqref{eq:C.11}, \eqref{eq:C.9} and \eqref{eq:C.10}, we get
\begin{equation}\label{eq:C.12}
  \abs{S}=\frac{2h(K)}{2h(F)}=\frac{h(-3p)}{2}.
\end{equation}
Plugging \eqref{eq:C.13}  and \eqref{eq:C.12} into the
equality $h^1(\calO_8)=3h^1(\calO_4)-2\abs{S}$ in Lemma~\ref{lem:relating-h1}, we obtain the
following class number formula
\begin{equation}
  \label{eq:63}
h^1(\calO_8)=\frac{15}{2}\zeta_F(-1)+\frac{3}{8}h(-p) \qquad
    \text{if }  p\equiv 5\pmod{8}.  
\end{equation}
Proposition~\ref{prop:class-number-O8} follows by 
combining (\ref{eq:65}) with (\ref{eq:63}) and interpolating. 
\end{proof}

\section{Classification of self-dual local lattices}
\label{sec:LC}

We consider some variants of lattices in symplectic spaces over local
fields that are used in this paper.  As usual,
$\delta_{ij}$ denotes Kronecker's
symbol, namely $\delta_{ij}=1$ or $0$ according to whether $i=j$ or
not.


\begin{lem}\label{LC.2}
  Let $K_0$ be a field, and $O_{K_0}\subseteq K_0$ be a Dedekind domain with quotient field
  $K_0$. 
  Let $K=\prod_i K_i$ be a commutative separable $K_0$-algebra, where
  each $K_i$ is a finite separable field extension of $K_0$, 
  and let $O_K$ be its maximal $O_{K_0}$-order. Let $(V,\psi)$ be a
  non-degenerate alternating $K_0$-valued $K$-module 
  such that $\psi(ax,y)=\psi(x,ay)$ for all $a\in K$ and all $x,y\in V$. Then 
  \begin{enumerate}
  \item There exists a self-dual $O_{K_0}$-valued $O_K$-lattice.
  \item Any
  two self-dual $O_{K_0}$-valued $O_K$-lattices $L$ and $L'$ are
  isometric. 
  \end{enumerate}
\end{lem}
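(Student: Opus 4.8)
The statement is the local classification of self-dual lattices in a symplectic space over a product of local fields, with the symmetry condition $\psi(ax,y)=\psi(x,ay)$ for the $K$-action. The plan is to reduce immediately to the case where $K$ is a single field: since $K=\prod_i K_i$ and $O_K=\prod_i O_{K_i}$ are products, the space $V$ decomposes as an orthogonal direct sum $V=\bigoplus_i V_i$ with $V_i=e_iV$ (where $e_i$ is the idempotent cutting out $K_i$), each $V_i$ being a $K_i$-module with its own alternating $K_0$-form, and every $O_K$-lattice splits accordingly as $L=\bigoplus_i L_i$ with $L_i$ an $O_{K_i}$-lattice. Orthogonality of the $V_i$ under $\psi$ follows from $\psi(e_ix,e_jy)=\psi(x,e_ie_jy)=0$ for $i\neq j$, using the symmetry hypothesis and $e_i^2=e_i$. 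Self-duality of $L$ is then equivalent to self-duality of each $L_i$ in $(V_i,\psi|_{V_i})$, so it suffices to treat each factor separately, i.e.\ assume $K/K_0$ is a finite separable field extension.

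For a single field extension $K/K_0$, the key device is the transfer: because $K/K_0$ is separable, the trace form is non-degenerate, and there is a unique $K$-bilinear alternating pairing $\psi_K\colon V\times V\to K$ with $\operatorname{Tr}_{K/K_0}\circ\,\psi_K=\psi$ (the symmetry condition $\psi(ax,y)=\psi(x,ay)$ is exactly what makes $\psi$ factor through the trace of a $K$-valued form; this is the same construction as $\psi_F$ in the proof of Proposition~\ref{P.4}). One checks $\psi_K$ is non-degenerate over $K$. Next I would relate $O_{K_0}$-duality to $O_K$-duality: for an $O_K$-lattice $L$, its $\psi$-dual $L^\vee=\{x\in V\mid \psi(x,L)\subseteq O_{K_0}\}$ equals its $\psi_K$-dual twisted by the inverse different, namely $L^{\vee}=\mathfrak{d}_{K/K_0}^{-1}\cdot L^{\vee_K}$ where $L^{\vee_K}=\{x\mid \psi_K(x,L)\subseteq O_K\}$ and $\mathfrak{d}_{K/K_0}$ is the different (so that $O_K^{\vee_{\mathrm{Tr}}}=\mathfrak{d}_{K/K_0}^{-1}$); this is the standard fact $\operatorname{Tr}_{K/K_0}(\mathfrak{d}^{-1}O_K)\subseteq O_{K_0}$ and no smaller fractional ideal works. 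Hence $L$ is $\psi$-self-dual iff $L^{\vee_K}=\mathfrak{d}_{K/K_0}L$, i.e.\ iff $L$ is "$\mathfrak{d}$-modular" for $\psi_K$. For existence I pick any $O_K$-basis of $V$, use it to build $L_0$, and note $L_0^{\vee_K}$ is again an $O_K$-lattice; a standard symplectic-basis argument over the (principal ideal) local ring $O_K$ produces a basis in which the Gram matrix of $\psi_K$ is the standard block form scaled appropriately into $\mathfrak{d}$, giving the desired $\mathfrak{d}$-modular lattice. (When $O_K$ is a Dedekind domain but not local, one runs the symplectic basis argument with a fractional ideal absorbed into one basis vector, exactly as in the theory of symplectic modules over Dedekind domains.)

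For uniqueness (part (2)), given two $\psi$-self-dual $O_K$-lattices $L$ and $L'$, by the previous paragraph both are $\mathfrak{d}$-modular for $\psi_K$. Rescaling $\psi_K$ by a generator of $\mathfrak{d}$ locally (or working ideal-theoretically), this says both are \emph{unimodular} lattices for a non-degenerate alternating $K$-bilinear form. The classification of unimodular symplectic $O_K$-lattices over a local ring (or Dedekind domain) is classical: any such lattice admits a symplectic basis, hence all of them are isometric by an element of $\mathrm{Sp}(V,\psi_K)\subseteq \mathrm{GL}_K(V)$, and such a $K$-linear isometry is in particular an $O_K$-linear map commuting with the $K$-action that carries $\psi$ to $\psi$. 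Concretely: induct on $\dim_K V$; pick $0\neq e\in L$ primitive (part of an $O_K$-basis), use unimodularity to find $f\in L$ with $\psi_K(e,f)=1$, split off the hyperbolic plane $O_Ke\oplus O_Kf$ which is $\psi_K$-unimodular, pass to its orthogonal complement in $L$ (again unimodular), and do the same in $L'$; match up the hyperbolic planes and invoke the inductive hypothesis on the complements.

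\textbf{Main obstacle.} The only real subtlety is the bookkeeping connecting the three notions of duality — $\psi$-duality over $O_{K_0}$, $\psi_K$-duality over $O_K$, and the different $\mathfrak{d}_{K/K_0}$ — and making sure the symplectic-basis / Witt-cancellation arguments go through over a Dedekind domain rather than just a local ring (one must allow a fractional $O_K$-ideal to appear on one basis vector and check it is forced to be trivial by unimodularity). Once the problem is transferred to "classify unimodular alternating $O_K$-modules," everything is standard and the result is clean; I expect the existence half to cost slightly more care than uniqueness because one has to actually exhibit a lattice whose $\psi_K$-dual is $\mathfrak{d}$ times itself rather than merely comparing two given ones.
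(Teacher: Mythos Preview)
Your proposal is correct and follows essentially the same route as the paper: reduce to a single field factor via the idempotent decomposition, transfer $\psi$ to a $K$-valued alternating form $\psi_K$ via the trace, and translate self-duality for $\psi$ into $\mathcal{D}_{K/K_0}^{-1}$-modularity for $\psi_K$. The only difference is packaging: where you run the symplectic-basis/hyperbolic-plane induction by hand, the paper invokes Shimura's invariant-factor decomposition \cite[Proposition~1.3]{Shimura1963-AltHermForms}, which produces a basis $\{x_i,y_j\}$ and fractional ideals $\gra_1\supseteq\cdots\supseteq\gra_n$ with $L=\sum O_Kx_i+\sum\gra_jy_j$, and then reads off that $L=L^\vee$ forces $\gra_i=\mathcal{D}_{K/K_0}^{-1}$ for all $i$, giving existence and uniqueness in one stroke over a general Dedekind domain without the local/principal-ideal maneuvering you flag as the main obstacle.
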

\begin{proof}
Let $L$ and $L'$ be $O_K$-lattices in $V$.   Decompose $L=\oplus L_i$ and $L'=\oplus L_i'$ into $O_{K_i}$-modules
  with respect to the decomposition $O_K=\prod_i O_{K_i}$ into the product
  of Dedekind
  domains. Clearly this decomposition
  is orthogonal with respect to the pairing $\psi$. The
  $O_K$-lattice $L$ 
  is self-dual if and only if each component $L_i$ is
  self-dual. Moreover, $(L,\psi)\simeq (L',\psi)$ if and only if
  $(L_i,\psi)\simeq (L'_i,\psi)$ for all $i$. Thus, without loss of
  generality, we may assume that $K$ is a finite separable field
  extension of $K_0$.

    Let $\psi_K:V\times V\to K$ be the unique $K$-bilinear form such that
  $\Tr_{K/K_0}\circ \psi_K=\psi$. The form $\psi_K$ remains
  non-degenerate and alternating. Denote the inverse different of
  $K/K_0$ by 
  $\calD^{-1}_{K/K_0}$.  
   The dual lattice of $L$ with
  respect to $\psi$ is defined as 
\begin{equation}
  \label{eq:dual1}
   L^\vee\coloneqq \{x\in V\, |\, \psi(x,L)\subseteq O_{K_0}\, \}.  
\end{equation}
Since $\psi(x,L)\subseteq O_{K_0}$ if and only if $\psi_K(x, L)\subseteq
\calD_{K/K_0}^{-1}$, we  have
\begin{equation}
  \label{eq:LC.19}
     L^\vee=\{x\in V\, |\, \psi_K(x,L)\subseteq \calD_{K/K_0}^{-1}\, \}.  
\end{equation}

According to \cite[Proposition~1.3]{Shimura1963-AltHermForms}, there
exist a $K$-basis $\{x_1, \cdots, x_n, y_1, \cdots, y_n\}$ of $V$ 
and fractional $O_K$-ideals $\gra_1, \cdots, \gra_n$ such that
\begin{gather}
  \psi_K(x_i, x_j)=\psi_K(y_i, y_j)=0, \qquad \psi_K(x_i,
  y_j)=\delta_{ij}, \quad\text{for all } 1\leq i, j\leq n, \label{eq:LC.21}\\
  L=\sum_{i=1}^n O_K x_i+ \sum_{j=1}^n \gra_jy_j, \qquad   \gra_1\supseteq\gra_2\supseteq  \cdots \supseteq \gra_n. \label{eq:LC.20}
\end{gather}
The fractional ideals $\gra_1\supseteq  \cdots \supseteq \gra_n$ are
called the \emph{invariant factors} of $L$ with respect to $\psi_K$. 
A direct calculation shows that
\begin{equation}
  L^\vee=\calD_{K/K_0}^{-1}(\sum_{i=1}^n \gra_i^{-1} x_i+ \sum_{j=1}^n O_K y_j).
  \end{equation}
It follows that $L$ is self-dual if and only if
$\gra_i=\calD_{K/K_0}^{-1}$ for all $1\leq i\leq n$. Therefore, 
 a self-dual $O_K$-lattice in $(V, \psi)$ exists and is
unique up to isometry. 
\end{proof}
\begin{rem}
The first invariant factor $\gra_1$  is called the
\emph{norm} of $L$ with respect to $\psi_K$ since it coincides with the $O_K$-ideal
generated by $\psi_K(x, y)$ for all $x,y\in L$. Following
\cite[\S1.4]{Shimura1963-AltHermForms}, an $O_K$-lattice $M$ in
$(V, \psi_K)$ is said to be \emph{maximal} if $M$ is a maximal one
among the $O_K$-lattices in $(V, \psi_K)$ with the same norm. 
We have shown in the proof of Lemma~\ref{LC.2}
  that an $O_K$-lattice $L$ in $(V, \psi)$ is self-dual if and only if
  it is a maximal lattice with norm $\calD_{K/K_0}^{-1}$ in $(V,
  \psi_K)$. 
\end{rem}

Let $\gra$ be an invertible $O_K$-ideal in $K$. 
An $O_K$-lattice $L$ is said to be
\emph{$\gra$-modular} in $(V,\psi)$ if $\gra (L^\vee)=L$.

\begin{cor}\label{LC.mod}
 Keep the notation and assumption of Lemma~\ref{LC.2}.  For any
  invertible ideal $\gra$ in $K$, there exists a unique $\gra$-modular $O_K$-lattices
  in $(V,\psi)$ up to isometry.
\end{cor}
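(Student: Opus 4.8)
The plan is to reduce $\gra$-modularity to a condition on the invariant factors of a lattice with respect to $\psi_K$, and then read off existence and uniqueness from the structure theory already set up in the proof of Lemma~\ref{LC.2}. First I would decompose everything componentwise along $K=\prod_i K_i$ and $O_K=\prod_i O_{K_i}$ — this is orthogonal for $\psi$ and compatible both with taking duals and with multiplication by $\gra$ — so that it suffices to treat the case where $K$ is a finite separable field extension of $K_0$. In that case $(V,\psi_K)$ admits a symplectic $K$-basis, and by Shimura's normal form \eqref{eq:LC.21}--\eqref{eq:LC.20} any $O_K$-lattice $L$ can be written as $L=\sum_i O_K x_i+\sum_j \gra_j y_j$ for a symplectic basis $\{x_i,y_i\}$ and invariant factors $\gra_1\supseteq\cdots\supseteq\gra_n$.

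Next I would run the same dual-lattice computation as in the proof of Lemma~\ref{LC.2}, which gives $L^\vee=\sum_i \calD_{K/K_0}^{-1}\gra_i^{-1}x_i+\sum_j \calD_{K/K_0}^{-1}y_j$, and hence
\begin{equation*}
\gra L^\vee=\sum_i \gra\calD_{K/K_0}^{-1}\gra_i^{-1}x_i+\sum_j \gra\calD_{K/K_0}^{-1}y_j.
\end{equation*}
Comparing the coefficients of the $x_i$ and the $y_j$ with those of $L$, one sees that $\gra L^\vee=L$ if and only if $\gra_i=\gra\calD_{K/K_0}^{-1}$ for every $i$. Thus, writing $\grb:=\gra\calD_{K/K_0}^{-1}$ (again an invertible fractional $O_K$-ideal), an $O_K$-lattice is $\gra$-modular in $(V,\psi)$ exactly when all of its invariant factors with respect to $\psi_K$ equal $\grb$. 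I expect this equivalence — really just the coefficient comparison above — to be the only step requiring genuine care, the delicate point being to track the inverse different $\calD_{K/K_0}$ correctly through the duality; once it is in place, the rest is formal, and there is no parity-type obstruction to contend with since the modular lattice will be produced by hand.

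Granting the criterion, existence follows by exhibiting $L_0:=\sum_i O_K x_i+\sum_j \grb\,y_j$ for any choice of symplectic $K$-basis of $(V,\psi_K)$: this is manifestly an $O_K$-lattice and is $\gra$-modular by the criterion. For uniqueness, given two $\gra$-modular lattices $L$ and $L'$, the criterion forces each to have normal form $\sum_i O_K x_i+\sum_j \grb\,y_j$ and $\sum_i O_K x_i'+\sum_j \grb\,y_j'$ in suitable symplectic bases, and the $K$-linear map $x_i\mapsto x_i'$, $y_i\mapsto y_i'$ is then a symplectic isometry of $(V,\psi_K)$ — hence of $(V,\psi)$ — carrying $L$ onto $L'$. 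This gives the claimed existence and uniqueness up to isometry.
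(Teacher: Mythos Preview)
Your proposal is correct and follows essentially the same route as the paper: reduce to the field case, use the invariant-factor description from the proof of Lemma~\ref{LC.2} to see that $\gra$-modularity is equivalent to $\gra_i=\gra\calD_{K/K_0}^{-1}$ for all $i$, and read off existence and uniqueness from that. The paper's version is terser, merely noting the criterion and saying the corollary follows easily, but the content is the same.
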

\begin{proof}
Once again, we reduce to the case $K$ being a finite
separable field extension of $K_0$. From the proof of Lemma~\ref{LC.2}, an
  $O_K$-lattice $L$  is
  $\gra$-modular if and only if all its invariant factors
  $\gra_i=\gra\calD_{K/K_0}^{-1}$. The corollary follows easily. 
\end{proof}
  
  %

We would like a similar result as Lemma~\ref{LC.2}
with a weaker condition than $O_K$ being a product of
Dedekind domains. The best that we can manage at the moment is for
Gorenstein orders over complete discrete valuation rings. 
Notation as in Lemma~\ref{LC.2}, 
an $O_{K_0}$-order $R$  in $K$ is
called {\it
  Gorenstein} if every short exact sequence of $R$-lattices
\[ 0\to R \to M \to N \to 0 \] 
splits. It is known that $R$ is Gorenstein if and only if its dual
$R^\vee\coloneqq \Hom_{O_{K_0}}(R,O_{K_0})$ is an invertible $R$-module 
\cite[Prop.~6.1, p.~1363]{Drozd-Kirichenko-Roiter-1967}; see also \cite[Proposition~3.5]{Gorenstein-orders-JPAA-2015}. 
\begin{lem}\label{LC.3}
  Let $K_0$ be a complete discrete valuation field, and $O_{K_0}$ be
  its valuation ring.  Let
  $K$, $(V,\psi)$ be as in Lemma~\ref{LC.2}, and let
  $R$ be a $O_{K_0}$-order in $K$. Assume that $V$ is free over $K$ and
  $R$ is Gorenstein. Then
  \begin{enumerate}
  \item There exists a self-dual $O_{K_0}$-valued {\it free} $R$-lattice
    in $(V,\psi)$. 
  \item Any
  two self-dual $O_{K_0}$-valued {\it free} $R$-lattices $L$ and $L'$ in
  $(V,\psi)$ are 
  isometric. 
  \end{enumerate}
\end{lem}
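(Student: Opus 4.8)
The plan is to reduce Lemma~\ref{LC.3} to the already-established Lemma~\ref{LC.2} by extracting the maximal order $O_K$ and controlling the gap between $R$-lattices and $O_K$-lattices, using crucially that $R$ is Gorenstein and that the base $O_{K_0}$ is a complete discrete valuation ring. First I would unwind the hypotheses: since $R$ is Gorenstein, its $O_{K_0}$-dual $R^\vee = \Hom_{O_{K_0}}(R, O_{K_0})$ is an invertible $R$-module, hence (as $O_{K_0}$ is local complete, so $R$ is semilocal) a free $R$-module of rank one, say $R^\vee = \mathfrak{c}^{-1}$ for some invertible fractional $R$-ideal $\mathfrak{c}$ with a free generator. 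As in the proof of Lemma~\ref{LC.2}, I would introduce the unique $K$-bilinear form $\psi_K\colon V\times V\to K$ with $\Tr_{K/K_0}\circ\psi_K = \psi$; it is non-degenerate and alternating. The dual of an $R$-lattice $L$ with respect to $\psi$ then satisfies $L^\vee = \{x \in V \mid \psi_K(x,L)\subseteq \mathfrak{D}^{-1}\cdot(\text{something involving }R^\vee)\}$; more precisely, writing $L^{\vee_K}$ for the $\psi_K$-dual relative to $O_K$, one has $L^\vee = R^\vee \cdot (\text{correction})$, and the Gorenstein hypothesis is exactly what makes $L^\vee$ again a \emph{free} $R$-lattice when $L$ is free. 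So self-duality $L = L^\vee$ becomes a condition that can be tracked through the $K$-module structure.

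Next I would construct a self-dual free $R$-lattice explicitly. Using \cite[Proposition~1.3]{Shimura1963-AltHermForms} over $K$ (applied to $V$ as a free $K$-module), choose a symplectic $K$-basis $\{x_1,\dots,x_n,y_1,\dots,y_n\}$ with $\psi_K(x_i,x_j)=\psi_K(y_i,y_j)=0$ and $\psi_K(x_i,y_j)=\delta_{ij}$. Pick a free generator $t$ of the invertible $R$-module $\mathfrak{D}_{K/K_0}^{-1}R^\vee$-type ideal that appeared above (the precise ideal is the one $\mathfrak c$ such that $L^\vee = \mathfrak c^{-1}L$ forces $L$ self-dual when $\mathfrak c = R$ after rescaling); then set $L := \sum_i R\,x_i + \sum_j R\,(t\,y_j)$. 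A direct computation, exactly parallel to the one in Lemma~\ref{LC.2} but with $O_K$ replaced by $R$ and using that $R^\vee$ is free, shows $L^\vee = L$. This handles existence.

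For uniqueness, let $L, L'$ be two self-dual free $R$-lattices. The key point is that $L$ and $L'$ have the same ``norm'' ideal $\{\,$span of $\psi_K(x,y)$, $x,y\in L\,\}$ — because self-duality pins this down to the fixed invertible $R$-ideal $\mathfrak c$ (the same for $L'$). Then I would invoke, over the complete DVR $O_{K_0}$, a cancellation/transitivity statement for free $R$-lattices carrying a perfect alternating pairing: any two $R$-lattices with the same norm that are both $\psi$-self-dual and both free are related by an element of the symplectic group $\mathrm{Sp}(V,\psi_K)(K)$ that can be chosen in $\mathrm{Sp}_{2n}(R)$ — essentially a symplectic-basis-extension argument over the semilocal ring $R$, using that a unimodular vector in a free $R$-module of rank $\ge 1$ extends to a basis (true since $R$ is semilocal). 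Concretely: given $x\in L$ generating a free rank-one $R$-summand with $\psi_K(x,L) = \mathfrak c$, one finds $y\in L$ with $\psi_K(x,y)$ a generator of $\mathfrak c$, splits off the hyperbolic plane $Rx \oplus R(ty)$, and induces on $n$. Matching this up for $L$ and $L'$ produces the desired isometry.

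The main obstacle I expect is the uniqueness step, specifically the splitting-off of a hyperbolic plane over the non-maximal Gorenstein order $R$. Over a Dedekind domain (Lemma~\ref{LC.2}) this is classical Shimura theory; over $R$ one must check that self-duality forces the ``norm at each layer'' to be constant (so there is no analogue of non-constant invariant factors obstructing diagonalization), and that the rank-one free summand one peels off is genuinely free and not merely projective — which is where $O_{K_0}$ being \emph{complete} local (so $R$ semilocal, so finitely generated projective $R$-modules of constant rank are free) and $R$ being \emph{Gorenstein} (so $R^\vee$ is invertible, keeping duals free) both get used. Once the induction on $n$ goes through, existence and uniqueness follow together, and the proof is complete. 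I would not expect to need anything beyond Lemma~\ref{LC.2}, the Gorenstein/dual characterization cited from \cite{Drozd-Kirichenko-Roiter-1967} and \cite{Gorenstein-orders-JPAA-2015}, and \cite[Proposition~1.3]{Shimura1963-AltHermForms}.
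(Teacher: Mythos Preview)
Your approach is correct and is essentially the same as the paper's, though the paper organizes it more cleanly. Both arguments hinge on the same two facts: (a) since $R$ is Gorenstein, the inverse different $\calD^{-1}_{R/O_{K_0}}\simeq R^\vee$ is an invertible $R$-module; (b) since $O_{K_0}$ is a complete DVR, $R$ is a finite product of local rings, so every invertible (indeed every stably free) $R$-module is free. The paper then observes that $\calD^{-1}_{R/O_{K_0}}=\delta^{-1}R$ for a single element $\delta\in K^\times$ and simply \emph{rescales}: setting $\psi_K':=\delta\psi_K$, one has $L^\vee=\{x\mid \psi_K'(x,L)\subseteq R\}$, so self-duality for $\psi$ is exactly the statement that $\psi_K'$ is a perfect $R$-valued alternating form on the free $R$-module $L$. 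Existence and uniqueness then follow in one stroke from a separate lemma (the paper's Lemma~\ref{LC.1}): over any Noetherian ring where stably-free implies free, a perfect alternating pairing on a free module admits a Lagrangian basis, proved by your hyperbolic-plane-splitting induction. What you describe as ``tracking the ideal $\mathfrak{c}$'' and ``splitting off $Rx\oplus R(ty)$'' is precisely this lemma plus the rescaling, just with the generator $t$ carried through the argument rather than absorbed into the form at the outset. The rescaling trick is worth adopting: it removes the vagueness in your description of ``the precise ideal $\mathfrak{c}$'' and makes the induction literally identical to the field case.
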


\begin{proof}
  Let $\psi_K:V\times V\to K$ be as before and by an abuse of notation, put 
\[ \calD^{-1}_{R/O_{K_0}}\coloneqq \{x\in
  K | \Tr_{K/K_0}(xR)\subset O_{K_0}\}.\] 
  One has $\calD^{-1}_{R/O_{K_0}}\simeq
  R^\vee$, which is  $R$-invertible
  by our assumption. Since $O_{K_0}$ is a
  complete discrete valuation ring, $R$ is a finite product of local rings. It follows that
  $\calD^{-1}_{R/O_{K_0}}=\delta^{-1}R$ for some element $\delta\in
  K^\times$. 
  Put $\psi_K'\coloneqq \delta \psi_K$. Then for any $R$-lattice $L$ in
  $(V, \psi)$, we have 
   \begin{equation}
     \label{eq:LC.22}
     L^\vee=\{x\in V\, |\, \psi_K(x,L)\subseteq \delta^{-1}R\, \}=\{x\in V\, |\, \psi_K'(x,L)\subseteq R\, \}.  
   \end{equation}
 Therefore, $L$ is self-dual with respect to $\psi$ if and only if it
 is so with respect to $\psi_K'$.  In particular, the existence and uniqueness of self-dual free $R$-lattices
in $(V, \psi)$ can be reduced to those of self-dual lattices in $(V,
\psi_K')$. Since $R$ is a product of local
Noetherian rings, any finitely generated stably free $R$-module is
free.   The desired results now follow from Lemma~\ref{LC.1} below. 
\end{proof}




\begin{lem}\label{LC.1}
  Let $R$ be a Noetherian commutative ring such that any finitely
  generated stably free
  $R$-module is free, and 
  let $L$ be a free $R$-module of rank $2n$.
  Then for any perfect alternating pairing 
  $\psi:L\times L\to
  R$, there is a Lagrangian basis with respect to $\psi$, 
  that is, a basis $\{x_1,\dots,
  x_n, y_1, \dots, y_n\}$ such that 
\[ \psi(x_i,y_j)=\delta_{ij}\quad \text{and} \quad
  \psi(x_i,x_j)=\psi(y_i,y_j)=0,\quad \forall\, i,j. \]
  In particular, there is only one perfect alternating pairing up to
  isomorphism.   
\end{lem}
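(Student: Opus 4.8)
The plan is to prove Lemma~\ref{LC.1} by induction on $n$, building up a symplectic basis one hyperbolic plane at a time. First I would dispose of the base case $n=0$ (nothing to prove, $L=0$) and observe that the real content starts at $n=1$. For the inductive step, pick any nonzero $x_1$ in a basis of $L$; since $\psi$ is perfect, the functional $\psi(x_1,-):L\to R$ is surjective onto $R$ (a perfect pairing identifies $L$ with $\Hom_R(L,R)$, and $x_1$ being part of a basis means its image is a unimodular element of $\Hom_R(L,R)$, hence surjective as a map to $R$). Therefore there exists $y_1\in L$ with $\psi(x_1,y_1)=1$. Because $\psi$ is alternating, $\psi(x_1,x_1)=\psi(y_1,y_1)=0$, so $x_1,y_1$ span a free rank-two direct summand $H=Rx_1\oplus Ry_1$ on which $\psi$ restricts to the standard hyperbolic form; the map $R^2\to H$ is an isomorphism since the Gram matrix $\left(\begin{smallmatrix}0&1\\-1&0\end{smallmatrix}\right)$ is invertible over $R$.

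Next I would show $L=H\oplus H^{\perp}$ as $R$-modules, where $H^{\perp}=\{z\in L:\psi(z,x_1)=\psi(z,y_1)=0\}$. The projection $L\to H$ sending $z\mapsto \psi(z,y_1)x_1-\psi(z,x_1)y_1$ (adjusting signs so it is the identity on $H$) is a retraction, which splits the inclusion $H\hookrightarrow L$; its kernel is exactly $H^{\perp}$, giving the internal direct sum decomposition. The restriction $\psi|_{H^{\perp}}$ is again a perfect alternating pairing: perfectness follows because $L\cong\Hom_R(L,R)$ decomposes compatibly with the orthogonal splitting, so $H^{\perp}\cong\Hom_R(H^{\perp},R)$ via $\psi$. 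Now $H^{\perp}$ is a direct summand of the free module $L$ of rank $2n$, with $H^{\perp}\oplus R^2\cong L\cong R^{2n}$, so $H^{\perp}$ is stably free of rank $2n-2$; by the hypothesis on $R$ it is in fact free. Applying the inductive hypothesis to $(H^{\perp},\psi|_{H^{\perp}})$ yields a Lagrangian basis $\{x_2,\dots,x_n,y_2,\dots,y_n\}$, and adjoining $x_1,y_1$ gives the desired Lagrangian basis of $L$.

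Finally, for the uniqueness statement: given two perfect alternating pairings $\psi,\psi'$ on $L\cong R^{2n}$, each admits a Lagrangian basis by the above, and the $R$-linear change of basis carrying one Lagrangian basis to the other is an isomorphism $(L,\psi)\isoto(L,\psi')$. Hence there is a unique perfect alternating pairing up to isomorphism. The main obstacle in this argument is the freeness of $H^{\perp}$: a priori the orthogonal complement of a hyperbolic plane is only a projective (indeed stably free) summand, and it is precisely the stably-free-implies-free hypothesis on $R$ that rescues the induction — without it one would only get a symplectic basis up to a projective twist. Everything else is a routine unwinding of the perfect-pairing identification $L\cong\Hom_R(L,R)$ and the alternating condition.
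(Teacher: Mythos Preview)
Your proof is correct and follows essentially the same approach as the paper: induction on $n$, splitting off a hyperbolic plane $\langle x_1, y_1\rangle$ via an explicit retraction, and invoking the stably-free-implies-free hypothesis to conclude the orthogonal complement is free of rank $2n-2$. Your write-up is, if anything, slightly more careful in justifying why $\psi(x_1,-)$ is surjective and in spelling out the uniqueness consequence.
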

\begin{proof}
  We prove  by induction on $n$. Let $\{e_1, \cdots, e_n\}$ be an
  $R$-basis of $L$. Since $\psi$ is a perfect pairing, the map $x\mapsto \psi(x,
  \cdot)$ establishes an isomorphism of  $R$-modules $L\to \Hom_R(L,
  R)$. 
  In particular, there exists $y_1\in L$ such that $\psi(e_1,
  y_1)=1$. Put $x_1=e_1$. 
  We note that  the $R$-submodule $\<x_1, y_1\>_R\subseteq L$ spanned by $x_1$ and $y_1$ is  \emph{free}
  of rank $2$. Indeed, if $ry_1\in Rx_1$ for some $r\in R$, then $r=\psi(x_1,
  ry_1)=0$. Let $L'\coloneqq \{x\in L\mid \psi(x,
  x_1)=\psi(x, y_1)=0\}$. For every $z\in L$, put $a=\psi(z, y_1)$,  
  $b=\psi(z, x_1)$, and $z'=z-ax_1+by_1$. Then $z'\in L'$, and hence $L=\<x_1, y_1\>_R\oplus L'$. Clearly, $L'$ is finitely
  generated and stably free. Thus it is free of rank $2n-2$ by our assumption on
  $R$. The restriction of $\psi$ to $L'$ is necessarily 
  perfect. Now the lemma follows by induction. 
\end{proof}

\begin{rem}\label{rem:LC}
  The condition for $R$ in Lemma~\ref{LC.1} is satisfied if $R$ is one
  of the following: a local ring, a Dedekind domain, a commutative
  Bass order \cite[Section~37]{curtis-reiner:1} \cite{Levy-Wiegand-1985}, a polynomial ring
  over a field (Serre's conjecture, now the Quillen-Suslin theorem),
  or any finite product of them.  In particular, if in
  Lemma~\ref{LC.3} we assume that $O_{K_0}$ is discrete valuation ring
  (without the completeness condition), and suppose further that $R$
  is a Bass $O_{K_0}$-order, then the results still hold. Indeed, in
  this case $R$ is a semi-local ring, so the invertible $R$-module
  $\calD^{-1}_{R/O_{K_0}}$ is again free, and we can still apply
  Lemma~\ref{LC.1} thanks to the preceding Remark.
\end{rem}

\begin{ex}
  We provide an example of a \emph{non-projective} self-dual lattice
  $L$ over a Bass order $R$ in a
  symplectic space. Let $d\in \Z$ be a square-free integer coprime to
  $6$, $K=\Q(\sqrt{d})$ and $R=\Z[6\sqrt{d}]$. 
     Let $(V, \psi_K)$ be a
  symplectic space over $K$ of dimension $4$ with a canonical basis
  $\{x_1, y_1, x_2, y_2\}$ satisfying (\ref{eq:LC.21}).  We put
\[ L=R_1 x_1 +\grb_1 y_1+ R_2x_2+\grb_2y_2,\]
  where $R_1=\Z[2\sqrt{d}], R_2=\Z[3\sqrt{d}]$ and
  $\grb_i=(R:R_i)=\{x\in K\mid xR_i\subseteq R\}$.  More explicitly,
  $\grb_1=3R_1$ and $\grb_2=2R_2$, and both of them are integral ideals
  of $R$.  From
 \cite[2.6(ii)]{Gorenstein-orders-JPAA-2015}, 
 $(R:\grb_i)=R_i$ for each $i$, which is also immediate from
 calculations. This implies that
\[ L=\{x\in V\mid \psi_K(x, L) \subseteq R\}\simeq \Hom_R(L, R).\]
The fractional $R$-ideal  $\<\psi_K(x,y)\mid x, y\in L\>_R$ coincides
with $R$ itself, so $L$ is also a maximal $R$-lattice with norm
$R$. Clearly, $L$ is not projective over $R$, otherwise both $R_i$ are
projective $R$-modules, which is absurd. 
\end{ex}

\section*{Acknowledgments}
The authors would like to express their gratitude to Tomoyoshi
Ibukiyama,  and
Chao Zhang for stimulating discussions.  
They thank the organizers of the 2019 Nagoya supersingular conference held September 30--October 4 of 2019 for their kind invitation.  
They also thank Sushi Harashita for his kind invitation to the RIMS supersingular conference held October 13--15, 2020, where the present work was presented.  
Xue is partially supported by the National Natural Science Foundation of China grant No.~12271410 and No.~12331002. Yu is partially supported
by the grants NSTC 109-2115-M-001-002-MY3 and 112-2115-M-001-009. Part of the  
manuscript was prepared during the first author's 2024 visit to Institute
of Mathematics, Academia Sinica. He thanks the institute for the warm
hospitality and great working conditions.


\def\cprime{$'$}

\end{document}